\renewcommand{\labelnamepunct}{\addcolon\addspace}
\theoremstyle{plain}
\newtheorem{Th}{Theorem}[section]
\newtheorem{Prop}[Th]{Proposition}
\newtheorem{Lem}[Th]{Lemma}
\newtheorem{Cor}[Th]{Corollary}
\newtheorem{Def}[Th]{Definition}
\newtheorem{Rem}[Th]{Remark}
\theoremstyle{nonumberplain}
\newtheorem{ackn}{Acknowledgement}
\newtheorem{proof}{Proof}
\newcommand{\cadlag}{c\`{a}dl\`{a}g}
\newcommand{\lBrack}{[\![}
\newcommand{\rBrack}{]\!]} 
\renewcommand{\thefootnote}{\fnsymbol{footnote}}
\numberwithin{equation}{section}
\begin{document}

\title{Remarks on F{\"o}llmer's pathwise It{\^o} calculus}
\author{Yuki Hirai}
\date{}
\maketitle

\begin{abstract}
We extend some results about F{\"o}llmer's pathwise It{\^o} calculus
that have only been derived for continuous paths to {\cadlag} paths with quadratic variation.
We study some fundamental properties of pathwise It{\^o} integrals with respect to
{\cadlag} integrators, especially associativity and the integration by parts formula.
Moreover, we study integral equations with respect to pathwise It{\^o} integrals.
We prove that some classes of integral equations, which can be explicitly solved in the
usual stochastic calculus, can also be solved within the framework of F{\"o}llmer's calculus.
\end{abstract}

\footnote[0]{2010 Mathematical Subject Classification. Primary 60H99; Secondary 60H05}
\renewcommand{\thefootnote}{\arabic{footnote}}

%\tableofcontents
\section{Introduction}

F{\"o}llmer's pathwise It{\^o} calculus originated in a seminal paper that introduced the notion of the quadratic variation of
a deterministic {\cadlag} path and proved a non-probabilistic version of It{\^o}'s formula~\cite{Foellmer_1981}.
In recent years, some related works have appeared, addressing applications to mathematical finance.
Most of these works only deal with continuous paths of quadratic variation while the results of the original paper include discontinuous cases.
The main purpose of the present paper is to generalize some of these results so that they can be applied to {\cadlag} paths with quadratic variation.

There are several approaches to pathwise constructions of stochastic integrations.
We can refer to
%%stochastic integralのpathwise approximation
%Bichteler~\cite{Bichteler_1981}, 
%Karandikar~\cite{Karandikar_1995}, 
%Willinger and Taqqu~\cite{Willinger_Taqqu_1988,Willinger_Taqqu_1989},
%%quasi-sure
%Nutz~\cite{Nutz_2012},
%%typical price paths
%Perkowski and Pr{\"o}mel~\cite{Perkowski_Proemel_2016},
%{\L}ochowski~\cite{Lochowski_2015},
%{\L}ochowski, Perkowski, and Pr{\"o}mel~\cite{Lochowski_Perkowski_Proemel_2016}
%Vovk~\cite{Vovk_2016},
%%Russo-Vallois symmetric integration
%Russo and Vallois~\cite{Russo_Vallois_2007},
%%Ito functional calculus, non-anticipating functional
%Dupire~\cite{Dupire_2009},
%Cont and Fourni{\'e}~\cite{Cont_Fournie_2010,Cont_Fournie_2010b,Cont_Fournie_2013}, %non-anticipative functional
%Ananova and Cont~\cite{Ananova_Cont_2017},
%%何だっけ？
%%Bender, Sottinen, and Valkeila~\cite{Bender_Sottinen_Valkeila_2008} %やっぱ削除
%stochastic integralのpathwise approximation
\cite{Bichteler_1981,Karandikar_1995,Willinger_Taqqu_1988,Willinger_Taqqu_1989,
Nutz_2012,
Perkowski_Proemel_2016,Lochowski_2015,Lochowski_Perkowski_Proemel_2016,
Vovk_2016,
Russo_Vallois_2007,
Dupire_2009,Cont_Fournie_2010,Cont_Fournie_2010b,Cont_Fournie_2013,Ananova_Cont_2017},
%何だっけ？
%Bender, Sottinen, and Valkeila~\cite{Bender_Sottinen_Valkeila_2008} %やっぱ削除
for example.
Among them, we think that F{\"o}llmer's approach is intuitively clear (especially from a financial application viewpoint) and needs only elementary arguments, which should be regarded as an advantage.
The rough path theory,
pioneered by
%Lyons~
\cite{Lyons_1998}
(see, also, 
%Gubinelli~
\cite{Gubinelli_2004}
and
%Friz and Shekhar~
\cite{Friz_Shekhar_2017}), 
should be considered as another important approach.
Some authors have studied the relation between F{\"o}llmer's pathwise It{\^o} calculus and rough path theory: see
%Perkowski and Pr{\"o}mel~
\cite{Perkowski_Proemel_2016}
and
%Friz and Hairer~
\cite{Friz_Hairer_2014}, for example.

Let us give an outline of F{\"o}llmer's pathwise It{\^o} calculus.
Let $\Pi = (\pi_n)_{n \in \mathbb{N}}$ be a sequence of partitions of $\mathbb{R}_{\geq 0}$ such that $\lvert \pi_n \rvert \to 0$ as $n \to \infty$.
We say that a {\cadlag} path $X \colon \mathbb{R}_{\geq 0} \to \mathbb{R}$ has quadratic variation along $\Pi$
if there exists a {\cadlag} increasing function $[X,X]$ such that for all $t \in \mathbb{R}_{\geq 0}$
\begin{enumerate}
\item $\sum_{t_i \in \pi_n} (X_{t_{i+1} \wedge t - X_{t_i \wedge t}})^2$ converges to $[X,X]_t$ as $n \to \infty$,
\item $\Delta [X,X]_{t} = (\Delta X_t)^2$.
\end{enumerate}
An $\mathbb{R}^d$-valued {\cadlag} path $X = (X^1,\dots,X^d)$ has quadratic variation if, for each $i$ and $j$, the real-valued path $X^i + X^j$ has quadratic variation.
%F{\"o}llmer~
In \cite{Foellmer_1981}, it is proved that if $X$ has quadratic variation, then for any $f \in C^2(\mathbb{R}^d)$
the path $t \mapsto f(X_t)$ satisfies It{\^o}'s formula.
That is, in $1$-dimensional case,
\begin{align*}
f(X_t) - f(X_0)
& = \int_0^t f'(X_{s-}) dX_s + \frac{1}{2} \int_0^t f''(X_{s-}) d[X,X]_s   \notag \\
& \quad + \sum_{0 < s \leq t} \left\{ \Delta f(X_s) - f'(X_{s-}) \Delta X_s - \frac{1}{2} f''(X_{s-}) (\Delta X_s)^2 \right\}
\end{align*}
holds for all $t \in \mathbb{R}_{\geq 0}$.
Here, the first term of the right-hand side, which we call the It{\^o}-F{\"o}llmer integral, is defined as the limit of a sequence of non-anticipative Riemann sums.
F{\"o}llmer's theorem claims that the It{\^o}-F{\"o}llmer integral $\int_0^t f'(X_{s-}) dX_s$ exists and it satisfies the above formula.

The results of this paper are divided into two parts.
Our first aim is to establish several calculation rules for It{\^o}-F{\"o}llmer integrals.
We slightly extend the theorem of F{\"o}llmer to a path of the form $t \mapsto f(A_t,X_t)$
with $f \in C^{1,2}(\mathbb{R}^{m} \times \mathbb{R}^d)$ and 
an $m$-dimensional path $A$ of locally finite variation.
Thus we find that for a path of the form $t \mapsto \nabla_x f(A_t,X_t)$ the It{\^o}-F{\"o}llmer integral
$\int_0^t \langle \nabla_x f(A_{s-},X_{s-}), dX_s \rangle$ exists.
We call  a path of the form $\nabla_x f(A_t,X_t)$ an admissible integrand of $X$.
Because a {\cadlag} path defined as the It{\^o}-F{\"o}llmer integral of an admissible integrand has quadratic variation,
we can consider the It{\^o}-F{\"o}llmer integral by a path of this type.
For this case, we prove that It{\^o}-F{\"o}llmer integral satisfies the so-called associativity rule (Theorem~\ref{2.3m}).
This was already proved by
%Schied~
\cite{Schied_2014} for continuous integrators;
we extend it to general {\cadlag} paths with quadratic variation.
An integration by parts formula is then proved as an application of associativity and F{\"o}llmer's theorem (Corollary~\ref{2.3s}).

The next aim is to study integral equations with respect to It{\^o}-F{\"o}llmer integrals.
By using the calculation rules mentioned above, we can solve certain classes of integral equations.
First, we solve linear integral equations:
the solutions to the homogeneous linear equations are given by the Doleans-Dade exponentials
(Proposition~\ref{3.1c}).
The solutions to the inhomogeneous linear equations are constructed by using the result for the homogeneous case (Proposition~\ref{3.1h}),
which is a re-interpretation of the result by
%Jaschke~
\cite{Jaschke_2003} in our pathwise setting.
These results are applied to compute a portfolio insurance strategy in finance (Proposition~\ref{6.1l}).
Further, we solve a path-dependent equation, which is called the drawdown equation, introduced by
%Cararro, El Karoui, and Ob{\l}{\'o}j~
\cite{Carraro_ElKaroui_Obloj_2012}: their results are re-interpreted in our pathwise setting (Proposition~\ref{3.3i}).

%outline
Let us describe the structure of this paper.
In Section 2, we study the basic properties of It{\^o}-F{\"o}llmer integrals.
In Section 2.1 we define a quadratic variation and show some propositions that will be used in following sections.
It{\^o}-F{\"o}llmer integrals are introduced and the associated It{\^o} formula
for It{\^o}-F{\"o}llmer integrals is proved in Section 2.2. 
The associativity of It{\^o}-F{\"o}llmer integrals and an integration by parts formula are proved in Section 2.3.
Pathwise quadratic variations of semimartingales are discussed in Section 2.4.
In Section 3, integral equations with respect to It{\^o}-F{\"o}llmer integrals are studied.
In Section 3.1, linear integral equations are explicitly solved.
In Section 3.2, certain nonlinear integral equations are solved.
In Section 3.3, drawdown equations, which are path-dependent equations, are studied.
Applications to financial topics satisfying certain kinds of floor constraints are considered in Section 3.4
.

Here we give some notation and terminology that are frequently used in this paper.
We write $\mathbb{N} = \{0,1,2,\dots \}$ and $\mathbb{N}_{\geq 1} = \{1,2,3,\dots\}$.
The set of real numbers is denoted by $\mathbb{R}$,
and we set $\mathbb{R}_{\geq 0} = \left[ 0,\infty \right[ = \{ r \in \mathbb{R} \mid r \geq 0 \}$.
For $x,y \in \mathbb{R}^d$, the standard Euclidean norm and inner product are denoted by $\lVert x \rVert$ and $\langle x,y \rangle$, respectively. 

A function $X \colon \mathbb{R}_{\geq 0} \to \mathbb{R}^d$ is called a {\cadlag} path if it is right continuous and have finite left-hand limits at all $t \in \mathbb{R}_{\geq 0}$. The set of all $\mathbb{R}^d$-valued {\cadlag} paths is denoted by $D(\mathbb{R}_{\geq 0},\mathbb{R}^d)$. For $X \in D(\mathbb{R}_{\geq 0},\mathbb{R})$, we write $X_t = X(t)$ and $\Delta X_t = X_t - X_{t-}$. For convenience, we define $X_{0-} = X_0$ and $\Delta X_0 = 0$.
If $X = (X^1,\dots,X^d) \in D(\mathbb{R}_{\geq 0},\mathbb{R}^d)$, we write $\Delta X_t = (\Delta X^1_t,\dots,\Delta X^d_t)$.
Let us recall that if $X$ is a {\cadlag} path satisfying $\lvert \Delta X \rvert \leq c$ on $[0,t]$,
then for any $\varepsilon > 0$ there exists a $\delta > 0$ such that $\lvert s - u \rvert < \delta$ and $s,u \in [0,t]$ implies $\lvert X_s - X_u \rvert < \varepsilon + c$.

The symbol $FV_{\mathrm{loc}}$ denotes the set of all real-valued {\cadlag} paths of locally finite variation.
The total variation of $A \in FV_{\mathrm{loc}}$ on $[0,t]$ is denoted by $V(A)_t$.
If $A \in FV_{\mathrm{loc}}$, the series $\sum_{0 \leq s \leq t} \Delta A_s$ converges absolutely for all $t \in \mathbb{R}_{\geq 0}$. Then $A^{\mathrm{d}}_t := \sum_{0 <s \leq t} \Delta A_s$
and $A^{\mathrm{c}} := A - A^{\mathrm{d}}$
are called the purely discontinuous part and the continuous part of $A$, respectively.
For $A \in FV_{\mathrm{loc}}$, we write $\int_{0}^{t} f(s) dA_s = \int_{]0,t]} f(s) dA_s$
when the Lebesgue-Stieltjes integral in the right hand is well-defined.

In this paper, we suppose that any partition $\pi = (t_i)_{i \in \mathbb{N}}$ of $\mathbb{R}_{\geq 0}$ satisfies
$0 = t_0 < t_1 < t_2 < \dotsb \to \infty. $
For a partition $\pi = (t_i)_{i \in \mathbb{N}}$, we define $\lvert \pi \rvert = \sup_{i} \lvert t_{i+1} - t_i \rvert$.
We often identify the partition $\pi = (t_i)_{i \in \mathbb{N}}$ with the set $\{ t_1,t_2,\dots \}$, 
and use set notation for partitions.
For example, we write $t_i \in \pi$ or $\pi \subset \pi'$ for two partitions $\pi$ and $\pi'$.
Given a sequence of partitions $(\pi_n)_{n \in \mathbb{N}} = ((t^n_i)_{i \in \mathbb{N}})_{n \in \mathbb{N}}$ and an $\mathbb{R}^d$-valued {\cadlag} path $X$, 
we write $\delta^{n}_i X = X_{t^n_{i+1}} - X_{t^n_i}$ for convenience.

Given $E \subset \mathbb{R}^n$, a normed space $V$ , and a function $f:E \to V$, we define
\begin{equation*}
\omega(f;\varepsilon) = \sup \{ \lVert f(y) - f(x) \rVert_{V} \mid x,y \in E,~\lVert x-y \rVert_{\mathbb{R}^n} < \varepsilon \}.
\end{equation*}
If $f$ is uniformly continuous, we have $\omega(f;\varepsilon) \to 0$ as $\varepsilon \to 0$.

Recall that a function $f \colon U \to \mathbb{R}^n$ defined on an open subset $U \subset \mathbb{R}^m \times \mathbb{R}^d$ is of $C^{k,l}$-class ($k,l \in \mathbb{N}$) if
\begin{enumerate}
\item for each $y$, the map $x \mapsto f(x,y)$ is $k$-times continuously differentiable and all derivatives with respect to $x$ are continuous in $(x,y) \in U$,
\item for each $x$, the map $y \mapsto f(x,y)$ is $l$-times continuously differentiable and all derivatives with respect to $y$ are continuous in $(x,y) \in U$.
\end{enumerate}
The symbol $C^{k,l}(U)$ stands for the space of real-valued $C^{k,l}$-class function on $U$.
The first and second order derivatives of $f$ with respect to a variable $x$
are denoted by $\nabla_x f$ and $\nabla_x^2 f$, respectively, if they exist.
We simply write $\nabla f$ for the first order derivative with respect to all the variables.

\section{Quadratic variations and It{\^o}-F{\"o}llmer integration}

\subsection{Definition of quadratic variation and its properties}

In this subsection, we define the quadratic variation of a {\cadlag} path along a sequence of partitions,
and we study some fundamental properties of it.

The discrete quadratic variation of a real-valued {\cadlag} path $X$ along a partition $\pi$ is
defined by 
\begin{equation*}
[X,X]^{\pi}_t = \sum_{t_i \in \pi} (X_{t_{i+1} \wedge t} - X_{t_i \wedge t})^2.
\end{equation*}

\begin{Def} \label{2.1a}
Let $\Pi = (\pi_n)_{n \in \mathbb{N}}$ be a sequence of partitions of $\mathbb{R}_{\geq 0}$ such that $\lvert \pi_n \rvert \to 0$, and
let $X: \mathbb{R}_{\geq 0} \to \mathbb{R}$ be a {\cadlag} path.
We say that \emph{$X$ has quadratic variation along $\Pi$} if it satisfies the following conditions.
\begin{enumerate}
\item The sequence of {\cadlag} paths $([X,X]^{\pi_n})_{n \in \mathbb{N}}$ converges pointwise to some {\cadlag} increasing function $[X,X]^{\Pi}$.
\item $\Delta [X,X]^{\Pi}_t = (\Delta X_t)^2$ holds for all $t \in \mathbb{R}_{\geq 0}$.
\end{enumerate}
The increasing function $[X,X]^{\Pi}$ is called the \emph{quadratic variation} of $X$ along $\Pi = (\pi_n)$.
We often omit the symbol $\Pi$ and simply write $[X,X]$ if there is no ambiguity.
\end{Def}

Note that if $X$ has quadratic variation along $\Pi$, it satisfies $\sum_{0 < s \leq t}(X_s)^2 < \infty$ for all $t \in \mathbb{R}_{\geq 0}$.
For continuous paths, existence of the quadratic variation is clearly equivalent to the pointwise convergence of
$([X,X]^{\pi_n})_{n \in \mathbb{R}}$ to a continuous increasing function $[X,X]$.
This condition is, in fact, equivalent to uniform convergence on compacts:
see
%Davis, Ob{\l}{\'{o}}j, and Siorpaes~
\cite{Davis_Obloj_Siorpaes_2017}.

A typical example of quadratic variation is the quadratic variation of a semimartingale. 
We discuss this case in Section 3.1.
Another example is a Dirichlet process: 
see
%F{\"o}llmer~
\cite{Foellmer_1981,Foellmer_1981b}.
Also, paths of certain Gaussian processes have quadratic variation:
see, for example,
%Baxter~
\cite{Baxter_1956}.
The existence of quadratic variations under non-probabilistic settings is studied in 
%Vovk~
\cite{Vovk_2012,Vovk_2015}.

\begin{Rem} \label{2.1aa0a}
In general, the dependence of the quadratic variation $[X,X]^{\Pi}$
on the choice of a sequence of partitions is inevitable.
This problem is discussed in
%Davis, Ob{\l}{\'o}j, and Siorpaes~
\cite[Section 7]{Davis_Obloj_Siorpaes_2017}.
\end{Rem}

\begin{Def} \label{2.1b}
A $d$-dimensional {\cadlag} path $X = (X^1,\dots,X^d)$ has quadratic variation along $\Pi$
if, for each $i,j \in \{ 1,\dots,d\}$, $X^i + X^j$ has quadratic variation along $\Pi$.
\end{Def}

\begin{Rem} \label{2.1c}
In general, the existence of the quadratic variations of $X$ and $Y$ does not implies
that of $X+Y$: see
%Schied~
\cite{Schied_2016}.
\end{Rem}

The symbol $QV(\Pi;\mathbb{R}^d)$
denotes the set of all $\mathbb{R}^d$-valued {\cadlag} paths that have quadratic variation along $\Pi$.
We also write $QV(\Pi)$, or $QV$ if $d = 1$.
For $(X,Y) \in QV(\Pi,\mathbb{R}^2)$, we define
\begin{equation*}
[X,Y]^{\Pi} = \frac{1}{2} \left( [X+Y,X+Y]^{\Pi} - [X,X]^{\Pi} - [Y,Y]^{\Pi} \right).
\end{equation*}
The path $[X,Y]^{\Pi}$ is of locally finite variation, by definition.
$[X,Y]^{\Pi}$ is called the quadratic covariation of $X$ and $Y$ along $\Pi$.
For $X,Y \in QV(\Pi;\mathbb{R})$, the condition $(X,Y) \in QV(\Pi;\mathbb{R}^2)$ is
clearly equivalent to the following two conditions.
\begin{enumerate}
\item The function $t \mapsto \sum_{t_i \in \pi_n}(X_{t_{i+1} \wedge t} - X_{t_i \wedge t})(Y_{t_{i+1} \wedge t}-Y_{t_i \wedge t})$ converges pointwise to some {\cadlag} path $[X,Y]^{\Pi}$ of locally finite variation.
\item $\Delta [X,Y]^{\Pi}_t = \Delta X_t \Delta Y_t$ holds for all $t \in \mathbb{R}_{\geq 0}$.
\end{enumerate}

Let $\delta_a$ be the Dirac measure on $\mathbb{R}_{\geq 0}$ concentrated at $a$,
let $X$ and $Y$ be {\cadlag} paths, and let $\pi$ be a partition of $\mathbb{R}_{\geq 0}$.
We define locally finite measures $\mu_X^{\pi}$ and $\mu_{X,Y}^{\pi}$ by 
\begin{equation*}
\mu_{X,Y}^{\pi} = \sum_{t_i \in \pi}(X_{t_{i+1}} - X_{t_i}) (Y_{t_{i+1}} - Y_{t_i}) \delta_{t_i}
\end{equation*}
and $\mu^{\pi}_X = \mu^{\pi}_{X,X}$. Then,
\begin{equation*}
\mu_{X,Y}^{\pi} ([0,t]) = \sum_{t_i \in \pi \cap [0,t]} (X_{t_{i+1}} - X_{t_i}) (Y_{t_{i+1}} - Y_{t_i}).
\end{equation*}

\begin{Prop} \label{2.1d}
Let $X$ and $Y$ be real-valued {\cadlag} paths and let $\Pi = (\pi_n)_{n \in \mathbb{N}}$ be a sequence of partitions such that $\lvert \pi_n \rvert \to 0$.
Then, for all $t \in \mathbb{R}_{\geq 0}$, the following two conditions are equivalent.
\begin{enumerate}
\item The sequence $([X,Y]^{\pi_n}_t)_{n \in \mathbb{N}}$ converges.
\item The sequence $(\mu_{X,Y}^{\pi_n}([0,t]))_{n \in \mathbb{N}}$ converges.
\end{enumerate}
If these conditions are satisfied, both sequences have the same limit.
\end{Prop}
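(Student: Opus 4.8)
The plan is to prove that the difference $\mu^{\pi_n}_{X,Y}([0,t]) - [X,Y]^{\pi_n}_t$ tends to $0$ as $n \to \infty$; the equivalence of (i) and (ii) and the equality of their limits then follow at once.

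Fix $t \in \mathbb{R}_{\geq 0}$, and for each $n$ let $j = j(n)$ be the unique index with $t^n_j \le t < t^n_{j+1}$. First I would compare the two sums interval by interval. For an index $i$ with $t^n_{i+1} \le t$ the interval $[t^n_i, t^n_{i+1}]$ lies in $[0,t]$, so it contributes $(\delta^n_i X)(\delta^n_i Y)$ to both $\mu^{\pi_n}_{X,Y}([0,t])$ and $[X,Y]^{\pi_n}_t$; for an index $i > j$ the interval contributes $0$ to $[X,Y]^{\pi_n}_t$ and does not occur in $\mu^{\pi_n}_{X,Y}([0,t])$. Hence all terms cancel except the one at $i = j$, and
\[
\mu^{\pi_n}_{X,Y}([0,t]) - [X,Y]^{\pi_n}_t = (\delta^n_j X)(\delta^n_j Y) - (X_t - X_{t^n_j})(Y_t - Y_{t^n_j})
\]
(when $t$ is itself a partition point, $t^n_j = t$ and the last product is $0$, consistently with this formula).

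Next I would show the right-hand side tends to $0$. Writing $\delta^n_j X = (X_{t^n_{j+1}} - X_t) + (X_t - X_{t^n_j})$ and similarly for $Y$, and cancelling the common product $(X_t - X_{t^n_j})(Y_t - Y_{t^n_j})$, the expression becomes
\[
(X_{t^n_{j+1}} - X_t)(Y_{t^n_{j+1}} - Y_t) + (X_{t^n_{j+1}} - X_t)(Y_t - Y_{t^n_j}) + (X_t - X_{t^n_j})(Y_{t^n_{j+1}} - Y_t).
\]
Since $t < t^n_{j+1} \le t + \lvert \pi_n \rvert$ and $\lvert \pi_n \rvert \to 0$, we have $t^n_{j+1} \downarrow t$, so right-continuity of $X$ and $Y$ gives $X_{t^n_{j+1}} - X_t \to 0$ and $Y_{t^n_{j+1}} - Y_t \to 0$. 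The other factors $X_t - X_{t^n_j}$ and $Y_t - Y_{t^n_j}$ stay bounded, because $X$ and $Y$ are bounded on the compact interval $[0,t]$ and $t^n_j \le t$. Each of the three products is therefore a sequence tending to $0$ times a bounded sequence, so their sum, hence $\mu^{\pi_n}_{X,Y}([0,t]) - [X,Y]^{\pi_n}_t$, tends to $0$. This gives both implications and the coincidence of the limits.

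I do not expect a genuine obstacle; the only point that needs care is the bookkeeping at the boundary interval $[t^n_j, t^n_{j+1}]$ — in particular whether $t$ is itself a partition point — and invoking the ``bounded $\times$ (tending to $0$)'' estimate symmetrically in $X$ and $Y$, so that the cross term $(X_t - X_{t^n_j})(Y_{t^n_{j+1}} - Y_t)$ is handled together with the other two.
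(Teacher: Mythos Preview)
Your proof is correct and follows essentially the same approach as the paper: both identify the single boundary interval $[t^n_j,t^n_{j+1}[$ containing $t$ as the only source of discrepancy and use right-continuity plus boundedness on compacts to kill it. The only cosmetic difference is that the paper first reduces to the diagonal case $X=Y$ by polarization and then factors $(X_{t_{i+1}}-X_{t_i})^2-(X_t-X_{t_i})^2$, whereas you handle the bilinear case directly via the three-term expansion.
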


\begin{proof}
It suffices to give a proof when $X=Y$.
If $t \in [t_i,t_{i+1}[$ and $t_i \in \pi_n$, we have
\begin{equation} \label{2.1e}
\left\lvert [X,X]^{\pi_n}_t - \mu_X^{\pi_n}([0,t]) \right\rvert
= \left\lvert (X_{t_{i+1}} - X_{t_i})^2 - (X_{t} - X_{t_i})^2 \right\rvert \leq 4 \sup_{s \in [0,t+\lvert \pi_n \rvert] } \lvert X_{s} \rvert \left\lvert X_{t_{i+1}} - X_{t} \right\rvert.
\end{equation}
By the assumption $\lvert \pi_n \rvert \to 0$ and right continuity of $X$, each side of \eqref{2.1e} converges to $0$.
This implies the assertion.
\end{proof}

We see that, by Proposition~\ref{2.1d}, 
there is essentially no difference between the two definitions of quadratic variation, 
convergence of $\mu_X^{\pi_n}$ and convergence of $[X,X]^{\pi_n}$.
Proposition~\ref{2.1d} yields that, for $X \in QV(\Pi)$,
the sequence of distribution functions associated with $(\mu_X^{\pi_n})_n$ converges pointwise to $[X,X]$.
Therefore, the sequence of measures $(\mu_X^{\pi_n})$ converges vaguely to the Stieltjes measure generated by $[X,X]$, denoted by $\mu_X^{\Pi}$.
This implies, in the continuous path case, that
\begin{equation} \label{2.1f}
\lim_{n \to \infty} \int_{[0,t]} f(X_s) \mu^{\pi_n}_X(ds) = \int_{[0,t]} f(X_{s}) d[X,X]_s
\end{equation}
holds for every bounded continuous function $f$.
%This is not obvious if $X$ is not continuous.
The following lemma claims that a similar property holds for a general {\cadlag} path of $QV(\Pi)$.
This is proved as a part of the proof of the It{\^o} formula in
%F{\"o}llmer~
\cite{Foellmer_1981}.
See also
%Medvegyev~
\cite[Theorem 6.52]{Medvegyev_2007} for a proof.
Here we will give a slightly extended version of this result.

\begin{Lem} \label{2.1g}
Assume that $X = (X^1,\dots,X^d) \in QV(\Pi;\mathbb{R}^d)$ and $Y = (Y^1,\dots,Y^m) \in D(\mathbb{R}_{\geq 0},\mathbb{R}^m)$.
Then for any continuous function $g:\mathbb{R}^d \times \mathbb{R}^{m} \to \mathbb{R}$, 
$t \in \mathbb{R}_{\geq 0}$, and $i,j \in \{1,\dots,d\}$, we have
\begin{equation*}
\lim_{n \to \infty} \int_{[0,t]} g(X_s,Y_s) \mu_{X^i,X^j}^{\pi_n}(ds) = \int_{[0,t]} g(X_{s-},Y_{s-}) d[X^i,X^j]^{\Pi}_s.
\end{equation*}
\end{Lem}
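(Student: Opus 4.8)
The plan is, first, to reduce to the case $i=j$ and to a real-valued integrator. Since $(U,V)\mapsto\mu^{\pi_n}_{U,V}$ and $(U,V)\mapsto[U,V]^{\Pi}$ are symmetric and bilinear, the polarization identity reduces the claim to showing, for a real-valued $Z\in QV(\Pi)$, that $\int_{[0,t]}g(X_s,Y_s)\,\mu^{\pi_n}_Z(ds)\to\int_{[0,t]}g(X_{s-},Y_{s-})\,d[Z,Z]^{\Pi}_s$; the case $i\neq j$, $d>1$ then follows without new ideas, so below I simply write $X$ for the integrator. Fix $t$. As $X$ and $Y$ are {\cadlag} they are bounded on $[0,t+1]$, so for $n$ with $\lvert\pi_n\rvert<1$ all the values $(X_s,Y_s)$ and $(X_{s-},Y_{s-})$ with $s\in[0,t+1]$ lie in a fixed compact $K$; replacing $g$ outside a neighbourhood of $K$ I may assume $g$ bounded and uniformly continuous. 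By Proposition~\ref{2.1d} and the definition of $QV(\Pi)$ one has $\mu^{\pi_n}_X([0,s])\to[X,X]_s$ for every $s$, so the total masses $\mu^{\pi_n}_X([0,t])$ stay bounded and, together with the vague convergence $\mu^{\pi_n}_X\to\mu^{\Pi}_X$, the restrictions of $\mu^{\pi_n}_X$ to $[0,t]$ converge weakly to the restriction of $\mu^{\Pi}_X$ to $[0,t]$ (the single interval of $\pi_n$ straddling $t$ places its mass at its left endpoint, which lies in $[0,t]$).

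Next I peel off finitely many large jumps. Given $\varepsilon>0$, since $\sum_{0<s\le t}(\Delta X_s)^2\le[X,X]_t<\infty$ I fix a finite set $S=\{0<s_1<\dots<s_N\le t\}$, taken to contain $t$ whenever $\Delta X_t\ne0$, with $\sum_{s\in(0,t]\setminus S}(\Delta X_s)^2<\varepsilon$, and set $\tilde X_u=X_u-\sum_{s_k\le u}\Delta X_{s_k}$, so that $\lvert\Delta\tilde X\rvert\le\sqrt\varepsilon$ on $[0,t]$. By the oscillation fact recalled in the introduction there is $\delta>0$ with $\lvert\tilde X_u-\tilde X_v\rvert<2\sqrt\varepsilon$ for $u,v\in[0,t]$, $\lvert u-v\rvert<\delta$. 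For $n$ large the points of $S$ lie in pairwise distinct partition intervals $\,]t^n_{i_k},t^n_{i_k+1}]$, and I split $\int_{[0,t]}g(X_s,Y_s)\,\mu^{\pi_n}_X(ds)=\sum_{t^n_i\le t}g(X_{t^n_i},Y_{t^n_i})(\delta^n_iX)^2=A_n+B_n$, with $A_n=\sum_{k=1}^{N}g(X_{t^n_{i_k}},Y_{t^n_{i_k}})(\delta^n_{i_k}X)^2$. Every increment occurring in $B_n$ is taken over an interval on which $X-\tilde X$ is constant, whence $\lvert\delta^n_iX\rvert=\lvert\delta^n_i\tilde X\rvert<2\sqrt\varepsilon$ there once $\lvert\pi_n\rvert<\delta$.

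For $A_n$ I pass to the limit term by term: $t^n_{i_k}\uparrow s_k$ strictly from below while $s_k\le t^n_{i_k+1}\downarrow s_k$, so by right continuity and the existence of left limits $\delta^n_{i_k}X\to\Delta X_{s_k}$ and $g(X_{t^n_{i_k}},Y_{t^n_{i_k}})\to g(X_{s_k-},Y_{s_k-})$, hence $A_n\to\sum_{s\in S}g(X_{s-},Y_{s-})(\Delta X_s)^2$. Writing $h(s)=g(X_{s-},Y_{s-})$ and $\tilde B_n=\sum_{t^n_i\le t,\,i\notin\{i_k\}}h(t^n_i)(\delta^n_iX)^2$, the bound $\lvert\delta^n_iX\rvert<2\sqrt\varepsilon$ in $B_n$ and uniform continuity of $g$ give $\limsup_n\lvert B_n-\tilde B_n\rvert\le C\,\omega(g;2\sqrt\varepsilon)$ (with $C$ independent of $n,\varepsilon$), the finitely many jumps of $(X,Y)$ of size $\ge\sqrt\varepsilon$ being harmless since at those the corresponding $X$-increment tends to $0$. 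Finally $\tilde B_n=\int_{[0,t]}h\,d\nu_n$, where $\nu_n$ is $\mu^{\pi_n}_X$ restricted to $[0,t]$ with the $N$ atoms sitting over the large-jump intervals deleted; by the weak convergence from the first paragraph and the termwise limits just used, $\nu_n$ converges weakly to $\nu$, the restriction of $\mu^{\Pi}_X$ to $[0,t]$ with its atoms $(\Delta X_{s_k})^2$ at the $s_k$ deleted -- i.e. the continuous part $d[X,X]^{\mathrm{c}}$ together with the small-jump atoms, of total mass $<\varepsilon$. Since the discontinuity set of $h$ lies in the countable jump set of $(X,Y)$, on which $\nu$ has mass $<\varepsilon$, applying the Portmanteau theorem to the upper and lower semicontinuous envelopes of $h$ (which coincide with $h$ off that set) yields $\limsup_n\lvert\tilde B_n-\int_{[0,t]}h\,d\nu\rvert\le C\varepsilon$. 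Because $\sum_{s\in S}h(s)(\Delta X_s)^2+\int_{[0,t]}h\,d\nu=\int_{[0,t]}h(s)\,d[X,X]_s$ identically, combining the three estimates bounds the difference in the lemma by $C'(\varepsilon+\omega(g;2\sqrt\varepsilon))$, and letting $\varepsilon\to0$ completes the proof.

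The main obstacle is the remainder $B_n$: the right-hand side carries the \emph{left-continuous} integrand $s\mapsto g(X_{s-},Y_{s-})$, while $\mu^{\pi_n}_X$ samples $g$ at partition points, which in the limit see both the values and the left limits of $X$, so that the plain vague-convergence argument used in \eqref{2.1f} for continuous paths breaks down. What rescues it is, on the one hand, the term-by-term matching of the finitely many large jumps of $X$ with the corresponding atoms of $[X,X]$, and on the other the uniform oscillation bound $\lvert\delta^n_iX\rvert<2\sqrt\varepsilon$ on the remaining intervals, which both controls the aggregate effect of the small jumps and lets uniform continuity of $g$ swallow the discrepancy between evaluating $g$ at a value and at a left limit.
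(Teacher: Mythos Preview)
Your proof is correct. Both your approach and the paper's begin by reducing to the scalar case $i=j$ via polarization and then handle that case by peeling off finitely many large jumps and controlling the remainder. The difference lies in the execution of the scalar step: the paper packages it into an abstract appendix lemma (Lemma~\ref{a1b}) about sequences of discrete measures whose distribution functions converge pointwise together with a one-atom matching condition, and proves that lemma by approximating $s\mapsto f(s-)$ by a step function. You instead work directly with the path, establish weak convergence of the restricted measures $\mu^{\pi_n}_X\vert_{[0,t]}$, and bridge the gap between the right-continuous sampling in $B_n$ and the left-continuous integrand $h(s)=g(X_{s-},Y_{s-})$ by a Portmanteau argument with the upper and lower semicontinuous envelopes of $h$, using that the residual measure $\nu$ places mass $<\varepsilon$ on the (countable) discontinuity set of $h$. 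Your route is more self-contained and leans on standard weak-convergence tools; the paper's is more modular, since Lemma~\ref{a1b} is stated in a form reusable elsewhere.

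One small point worth tightening in your write-up is the bound on $\lvert B_n-\tilde B_n\rvert$: the contribution of those partition points $t^n_i$ that happen to coincide with one of the finitely many large jump times of $(X,Y)$ is not absorbed by the modulus-of-continuity term but rather dispatched by the observation that, for each such fixed jump time $s$, the increment $\delta^n_i X$ over the interval to the right of $s$ tends to $0$ by right continuity. You do say this, but making the ``fixed $s$, varying $n$'' structure explicit (and noting that the set of such $s$ is finite for fixed $\varepsilon$) would remove any ambiguity.
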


\begin{proof}
If $d=1$, the assertion follows directly from Lemma~\ref{a1b}.
For the general case, we apply Lemma~\ref{a1b} to each $X^i$ and $X^i + X^j$, 
and combine their convergence and the definition of quadratic covariation.
\end{proof}

In semimartingale theory,
a process defined as the composition of a semimartingale and a $C^1$-class function
again has quadratic variation 
(%Meyer~
\cite[Chapitre VI. 5. Theorem]{Meyer_1976}).
%F{\"o}llmer~
\cite{Foellmer_1981} mentions that a similar result holds within this framework.
Here we prove a slightly extended version of that result.

\begin{Prop} \label{2.1m}
Let $X \in QV(\Pi,\mathbb{R}^d)$, let $A \in FV_{\mathrm{loc}}^m$,
and let $f \in C^{0,1}(\mathbb{R}^m \times \mathbb{R}^d)$ be  locally Lipschitz.
Then $f(A,X)$ has the quadratic variation along $\Pi$ given by
\begin{equation} \label{2.1n}
\left\lbrack f(A,X),f(A,X) \right\rbrack_t = \sum_{k,l=1}^d \int_{0}^{t} \left( \frac{\partial f}{\partial x_k}\frac{\partial f}{\partial x_l} \right)(A_{s-},X_{s-}) d[X^k,X^l]^{\mathrm{c}}_s + \sum_{0 < s \leq t} (\Delta f(A_s,X_s))^2.
\end{equation}
\end{Prop}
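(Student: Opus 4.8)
The plan is to mimic the classical semimartingale argument but at the pathwise level, exploiting the telescoping structure of discrete quadratic variation together with Lemma~\ref{2.1g}. Fix $t \in \mathbb{R}_{\geq 0}$. First I would write, for each partition $\pi_n$,
\begin{equation*}
[f(A,X),f(A,X)]^{\pi_n}_t = \sum_{t_i \in \pi_n} \bigl( f(A_{t_{i+1} \wedge t}, X_{t_{i+1}\wedge t}) - f(A_{t_i \wedge t}, X_{t_i \wedge t}) \bigr)^2,
\end{equation*}
and apply a first-order Taylor expansion of $f$ in the $x$-variable along each increment, using the $C^{0,1}$-regularity: the increment equals $\langle \nabla_x f(A_{t_i\wedge t}, X_{t_i\wedge t}), \delta^n_i X\rangle$ plus the increment of $f$ coming from the change in $A$ plus a remainder that is $o(\lVert \delta^n_i X\rVert)$ uniformly on compacts. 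Squaring, the dominant term is a quadratic form in the $X$-increments with coefficients $\frac{\partial f}{\partial x_k}\frac{\partial f}{\partial x_l}$ evaluated at the left endpoints; the cross terms and the contributions from the $A$-part are controlled because $A$ has locally finite variation (so $\sum_i \lvert \delta^n_i A \rvert$ stays bounded) and because the local Lipschitz property bounds the $A$-increment of $f$ by a constant times $\lvert \delta^n_i A\rvert$.

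The main term $\sum_{k,l} \sum_{t_i\in\pi_n} \bigl(\frac{\partial f}{\partial x_k}\frac{\partial f}{\partial x_l}\bigr)(A_{t_i\wedge t}, X_{t_i\wedge t})\, \delta^n_i X^k\, \delta^n_i X^l$ is exactly of the form $\int_{[0,t]} g(X_s, Y_s)\,\mu^{\pi_n}_{X^k,X^l}(ds)$ (with $Y=A$ and $g = \frac{\partial f}{\partial x_k}\frac{\partial f}{\partial x_l}$), up to the boundary discrepancy near $t$ that was already handled in Proposition~\ref{2.1d}. By Lemma~\ref{2.1g} this converges to $\sum_{k,l}\int_{[0,t]} \bigl(\frac{\partial f}{\partial x_k}\frac{\partial f}{\partial x_l}\bigr)(A_{s-}, X_{s-})\, d[X^k,X^l]^{\Pi}_s$. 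Then I must split $[X^k,X^l]^{\Pi} = [X^k,X^l]^{\mathrm c} + [X^k,X^l]^{\mathrm d}$; the purely discontinuous part contributes $\sum_{0<s\le t} \langle \nabla_x f(A_{s-},X_{s-}),\Delta X_s\rangle^2$, which must combine with the jump contributions extracted from the remainder to produce the stated jump sum $\sum_{0<s\le t}(\Delta f(A_s,X_s))^2$. Verifying that the "small" jumps plus the continuous error terms reorganize correctly into $(\Delta f(A_s,X_s))^2 = (\langle \nabla_x f(A_{s-},X_{s-}),\Delta X_s\rangle + \text{(higher order in }\Delta X_s) + \text{(}A\text{-jump part)})^2$ — i.e.\ that nothing is double-counted and the error genuinely vanishes — is where the bookkeeping is delicate.

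The hard part will be controlling the remainder term uniformly: one needs that $\sum_{t_i\in\pi_n} r_n(i)^2 \to 0$ where $r_n(i)$ is the Taylor remainder, and this requires splitting the partition points into those near a "large" jump of $X$ (finitely many, handled individually by right-continuity and the jump identity $\Delta[X,X] = (\Delta X)^2$) and those in the "good" region where increments of $X$ are uniformly small and the local modulus of continuity of $\nabla_x f$ can be invoked, while using $\sum_{t_i\in\pi_n}(\delta^n_i X)^2 \le [X,X]^{\pi_n}_{t+|\pi_n|} = O(1)$ to absorb the quadratic factor. This is the same type of estimate that appears in F\"ollmer's original proof of the It\^o formula, so I would organize it as a lemma of the form used there (indeed the excerpt already alludes to auxiliary Lemma~\ref{a1b}), and I expect that once the jump/non-jump decomposition is set up, the continuous error estimate follows from the definition of $\omega(\nabla_x f;\varepsilon)$ on a compact set together with the boundedness of the discrete quadratic variations. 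Finally I would check condition (ii) of Definition~\ref{2.1a} separately: the jump of the right-hand side of \eqref{2.1n} at $t$ is precisely $(\Delta f(A_t,X_t))^2$ since $[X^k,X^l]^{\mathrm c}$ is continuous, which matches $(\Delta f(A_t,X_t))^2$ as required.
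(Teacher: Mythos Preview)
Your proposal is correct and follows essentially the same route as the paper: a first-order Taylor expansion in the $x$-variable, the main bilinear term handled by Lemma~\ref{2.1g}, the $A$-contribution controlled via Lipschitz continuity and finite variation, and the remainder managed by splitting partition intervals into those containing a ``large'' jump of $(A,X)$ (treated individually) versus the rest (controlled by $\omega(\nabla_x f;\cdot)$ on a compact). The paper carries this out with an explicit seven-term decomposition $I_1^{(n)},\dots,I_7^{(n)}$ and the sets $D_p(t)$, but the structure and all the key estimates are exactly as you describe.
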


\begin{proof}
Fix $t>0$. Because the image of $[0,t]$ under $(A,X)$ is bounded in $\mathbb{R}^{m+d}$,
we can assume, without loss of generality, that $f$ has compact support .
First note that there is a positive constant $C$ such that
\begin{equation*}
\sum_{0 < s \leq t} \left(\Delta f(A_{s},X_{s}) \right)^2 \leq C \left( \sum_{0 < s \leq t} (\Delta X_s)^2 + \sum_{0 < s \leq t} (\Delta A_s)^2 \right) < \infty
\end{equation*}
thanks to the Lipschitz continuity of $f$.

Let 
\begin{gather*}
D(t) = \{ s \in [0,t] \mid \Delta (A,X)_s \neq 0 \},     \\
D_p(t) =\left\{ s \in D(t) \,\middle\vert\, \lVert \Delta (A,X)_s \rVert_{\mathbb{R}^{m+d}} \geq \frac{1}{p}  \right\}, \quad p \in \mathbb{N}_{\geq 1}.
\end{gather*}
Then each $D_p(t)$ is a finite set and $\bigcup_{p \geq 1} D_p(t) = D(t)$.
By convention, we use the following notation.
\begin{equation*}
\sum_{(1,n,p)} = \sum_{\substack{ t_i \in \pi_n \cap [0,t] \\ ]t_i,t_{i+1}] \cap D_p(t) \neq \emptyset}}, \qquad \sum_{(2,n,p)} = \sum_{t_i \in \pi_n \cap [0,t]} - \sum_{(1,n,p)}.
\end{equation*}
Using this notation and Taylor's theorem, we have
\begin{align} \label{2.1q}
& \sum_{t_i \in \pi_n \cap [0,t]} \left\{ \delta^n_i f(A,X) \right\}^2  \notag \\
& = \sum_{(1,n,p)}  \left\{ \delta^n_i  f(A,X) \right\}^2 + \sum_{(2,n,p)} \left\{ f(A_{t^n_{i+1}},X_{t^n_{i+1}})- f(A_{t^n_i},X_{t^n_{i+1}}) \right\}^2  \notag \\
& \quad + 2\sum_{(2,n,p)}  \left\{ f(A_{t^n_{i+1}},X_{t^n_{i+1}})- f(A_{t^n_i},X_{t^n_{i+1}}) \right\} \left\{ f(A_{t^n_i},X_{t^n_{i+1}})- f(A_{t^n_i},X_{t^n_i}) \right\}  \notag \\
& \quad + \sum_{t_i \in \pi_n \cap [0,t]} \left\langle \nabla_x f(A_{t^n_i},X_{t^n_i}), \delta^n_i X \right\rangle^2 - \sum_{(1,n,p)} \left\langle \nabla_x f(A_{t^n_i},X_{t^n_i}), \delta^n_i X \right\rangle^2  \notag \\
& \quad + 2 \sum_{(2,n,p)} \left\langle r^n_i, \delta^n_i X \right\rangle \left\langle \nabla_x f(A_{t_i},X_{t_i}), \delta^n_i X \right\rangle + \sum_{(2,n,p)} \left\langle r^n_i, \delta^n_i X \right\rangle^2    \notag \\
& =: I_1^{(n)} + I_2^{(n)} + 2 I_3^{(n)} + I_4^{(n)} - I_5^{(n)} + 2 I_6^{(n)} + I_7^{(n)},
\end{align}
where 
\begin{equation*}
r^n_i = \int_{[0,1]} \nabla_x f(A_{t^n_i},X_{t^n_i}+s \,\delta^n_i X) ds - \nabla_x f(A_{t^n_i},X_{t^nt_i}) \in \mathbb{R}^d.
\end{equation*}
We now consider the behavior of each term of the right-hand side of \eqref{2.1q}.

Since $D_1$ is finite, it is easy to verify that
\begin{align*}
\lim_{n \to \infty} I_1^{(n)} & = \sum_{s \in D_p(t)} (\Delta f(A_s,X_s))^2,     \\
\lim_{n \to \infty} I_5^{(n)} & = \sum_{s \in D_p(t)} \sum_{k,l=1}^d \left( \frac{\partial f}{\partial x_k}\frac{\partial f}{\partial x_l} \right) (A_{s-},X_{s-}) \Delta X^k_s \Delta X^l_s.
\end{align*}
Lipschitz continuity of $f$ implies that
\begin{equation*} 
\varlimsup_{n \to \infty} I_2^{(n)} \leq  \frac{K}{p} \sum_{k=1}^{m} V(A^k)_t, \quad
\varlimsup_{n \to \infty} \left\lvert I_3^{(n)} \right\rvert \leq \frac{K}{p} \, \sum_{k=1}^m V(A^k)_t.
\end{equation*}
holds for a positive constant $K$.
The convergence
\begin{align*}
\lim_{n \to \infty} I_4^{(n)} = \sum_{k,l=1}^d \int_0^t \left( \frac{\partial f}{\partial x_k} \frac{\partial f}{\partial x_l} \right) (A_{s-},X_{s-}) d[X^k,X^l]_s
\end{align*}
follows from Lemma~\ref{2.1g}.
Moreover, we have
\begin{equation*}
\varlimsup_{n \to \infty} \left\lvert I_6^{(n)} \right\rvert \leq \omega \left( \nabla_x f;\frac{2}{p} \right) K' \sum_{k=1}^d [X^k,X^k]_t,  \quad 
\varlimsup_{n \to \infty} I_7^{(n)} \leq \omega \left( \nabla_x f;\frac{2}{p} \right)^2 \sum_{k=1}^d [X^k,X^k]_t,
\end{equation*}
where $K' = \sup_{s \in [0,t]} \lVert \nabla_x f(A_s,X_s) \rVert$.

From \eqref{2.1q} and the above, we see that
\begin{align} \label{2.1y}
& \varlimsup_{n \to \infty} \left\lvert \sum_{t_i \in \pi_n \cap [0,t]} \big\lbrack \delta^n_i f(A,X) \big\rbrack^2 - \left( \text{RHS of \eqref{2.1n}} \right) \right\rvert   \notag \\
& \leq  C\left( \frac{1}{p} + \omega \left( \nabla_x f; \frac{2}{p} \right) + \omega \left( \nabla_x f; \frac{2}{p} \right)^2 \right)  + \left\lvert \sum_{s \in D(t) \setminus D_p(t)} \left( \Delta f(A_s,X_s) \right)^2 \right\rvert  \notag \\
& \quad + \left\rvert \sum_{s \in D(t) \setminus D_p(t)} \sum_{k,l=1}^d \left( \frac{\partial f}{\partial x_k}\frac{\partial f}{\partial x_l} \right) (A_{s-},X_{s-}) \Delta X^k_s \Delta X^l_s \right\rvert
\end{align}
holds for some positive constant $C$.
Let $p \to \infty$, and then every term of the right-hand side of \eqref{2.1y} converges to $0$.
This completes the proof.
\end{proof}

\begin{Rem} \label{2.1ya}
If the functions $f$ of Proposition~\ref{2.1m} are defined on only a subset $U \subset \mathbb{R}^{m+d}$,
the assertion is still valid provided that $(A,X)$ satisfies the condition that, for each $t$,
the image $(A,X)([0,t])$ is relatively compact in $U$.
\end{Rem}

The following results are direct consequences of Proposition~\ref{2.1m}. 

\begin{Cor} \label{2.1z}
\begin{enumerate}
\item If $A \in FV_{\mathrm{loc}}$, then $A \in QV(\Pi;\mathbb{R})$ holds
for any partitions $\Pi = (\pi_n)$ with the condition $\lvert \pi_n \rvert \to 0$,
and its quadratic variation is
\begin{equation*}
[A,A] = \sum_{0 < s \leq \cdot} (\Delta A_s)^2.
\end{equation*}
\item If $A \in FV_{\mathrm{loc}}$ and $X \in QV(\Pi;\mathbb{R})$,
we have $(X,A) \in QV(\Pi;\mathbb{R}^2)$ and 
\begin{gather*}
[X+A,X+A] = [X,X] + [A,A] + 2[X,A],  \\
[X,A] = \sum_{0 < s \leq \cdot} \Delta X_s \Delta A_s,  \\
[X+A,X+A]^{\mathrm{c}} = [X,X]^{\mathrm{c}}.
\end{gather*}
\item If $A \in FV_{\mathrm{loc}}$ and $(X,Y) \in QV(\Pi;\mathbb{R}^2)$, then $(X+A,Y) \in QV(\Pi;\mathbb{R}^2)$ and its quadratic covariation is $[X+A,Y] = [X,Y] + [A,Y]$.
\end{enumerate}
\end{Cor}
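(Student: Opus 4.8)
The plan is to obtain all three assertions by specializing Proposition~\ref{2.1m} to affine functions $f$ and then doing the elementary bookkeeping with the polarization identity $[X,Y]=\frac{1}{2}([X+Y,X+Y]-[X,X]-[Y,Y])$ and the jump relation (ii) of Definition~\ref{2.1a}.

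For (i) I would apply Proposition~\ref{2.1m} with $d=1$, the constant integrator $X\equiv 0$ (which lies in $QV(\Pi;\mathbb{R})$ for every $\Pi$, with $[X,X]\equiv 0$), the dimension of $A$ as $m$, and $f(a,x)=a$, which is globally Lipschitz and of class $C^{0,1}$. Since $\nabla_x f\equiv 0$, formula~\eqref{2.1n} reduces to $[A,A]_t=\sum_{0<s\le t}(\Delta f(A_s,0))^2=\sum_{0<s\le t}(\Delta A_s)^2$; this series is finite on compacts and defines a \cadlag{} increasing function because $\sum_{0<s\le t}\lvert\Delta A_s\rvert<\infty$ for $A\in FV_{\mathrm{loc}}$, and its jump at $s$ is $(\Delta A_s)^2$, so Definition~\ref{2.1a} is satisfied. (Alternatively one may invoke the one-line bound $[A,A]^{\pi_n}_t\le(\sup_i\lvert\delta^n_iA\rvert)\,V(A)_t$, but appealing to Proposition~\ref{2.1m} keeps everything uniform.)

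For (ii) I would apply Proposition~\ref{2.1m} with $d=m=1$, integrator $X\in QV(\Pi;\mathbb{R})$, finite-variation part $A$, and $f(a,x)=x+a$; then $f(A,X)=X+A$ and \eqref{2.1n} yields $[X+A,X+A]_t=[X,X]^{\mathrm{c}}_t+\sum_{0<s\le t}(\Delta X_s+\Delta A_s)^2$. In particular $X+A\in QV(\Pi)$, so $(X,A)\in QV(\Pi;\mathbb{R}^2)$, and the first displayed identity is just the definition of $[X,A]$ rewritten. For the second, I substitute this formula together with $[A,A]_t=\sum_{0<s\le t}(\Delta A_s)^2$ from (i) and $[X,X]_t=[X,X]^{\mathrm{c}}_t+\sum_{0<s\le t}(\Delta X_s)^2$ (the latter from condition (ii) of Definition~\ref{2.1a}) into $[X,A]=\frac{1}{2}([X+A,X+A]-[X,X]-[A,A])$; all square terms cancel and only the cross term $\sum_{0<s\le t}\Delta X_s\Delta A_s$ survives. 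For the third, the jump part of $[X+A,X+A]$ equals $\sum_{0<s\le t}(\Delta(X+A)_s)^2$ again by condition (ii) of Definition~\ref{2.1a}, so removing it from the expression above leaves $[X+A,X+A]^{\mathrm{c}}=[X,X]^{\mathrm{c}}$.

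For (iii), note first that $(X,Y)\in QV(\Pi;\mathbb{R}^2)$ forces $X,Y,X+Y\in QV(\Pi;\mathbb{R})$, whence $X+A\in QV(\Pi;\mathbb{R})$ by (ii). Since $X+A$ and $Y$ both have quadratic variation along $\Pi$, by the characterization of $QV(\Pi;\mathbb{R}^2)$ recalled in the text it suffices to show that $[X+A,Y]^{\pi_n}_t$ converges pointwise to a \cadlag{} path of locally finite variation with the correct jumps. By bilinearity $[X+A,Y]^{\pi_n}_t=[X,Y]^{\pi_n}_t+[A,Y]^{\pi_n}_t$, where the first term converges to $[X,Y]_t$ by hypothesis and the second to $[A,Y]_t=\sum_{0<s\le t}\Delta A_s\Delta Y_s$ by part (ii) applied to the pair $(Y,A)$; hence the limit $[X,Y]+[A,Y]$ is of locally finite variation, and its jump at $s$ is $\Delta X_s\Delta Y_s+\Delta A_s\Delta Y_s=\Delta(X+A)_s\,\Delta Y_s$, as required. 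There is no genuine obstacle here: the content is entirely in Proposition~\ref{2.1m}, and the only care needed is to verify that each affine $f$ meets its hypotheses and to keep the continuous and purely discontinuous parts straight when passing between $[\cdot,\cdot]$ and $[\cdot,\cdot]^{\mathrm{c}}$ via condition (ii) of Definition~\ref{2.1a}.
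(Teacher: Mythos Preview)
Your proposal is correct and follows exactly the route the paper indicates: the paper gives no explicit proof beyond the sentence ``The following results are direct consequences of Proposition~\ref{2.1m},'' and you have supplied precisely the intended specializations (taking $f(a,x)=a$, $f(a,x)=x+a$, and then polarizing) together with the bookkeeping needed to read off the jump and continuous parts. The only point worth noting is that in part~(iii) you correctly invoke the alternative characterization of $QV(\Pi;\mathbb{R}^2)$ stated after Definition~\ref{2.1b} rather than checking $X+A+Y\in QV(\Pi)$ directly; this is legitimate and arguably cleaner.
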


\subsection{It{\^o}-F{\"o}llmer integrals and It{\^o}'s formula}

In this subsection, we introduce the idea of the ``integral'' $\int_{0}^{t} \xi_{s-} dX_s$
with respect to $X \in QV(\Pi)$
and prove the It{\^o} formula.
We first define the It{\^o}-F{\"o}llmer integral as the limit of a sequence non-anticipative Riemann sums. 

\begin{Def} \label{2.2a}
Let $\xi: \mathbb{R}_{\geq 0} \to \mathbb{R}^d$ be a {\cadlag} path and consider $X \in QV(\Pi;\mathbb{R}^d)$.
We call the limit 
\begin{equation*}
\int_0^t \langle \xi_{s-},dX_s \rangle := \lim_{n \to \infty} \sum_{t_i \in \pi_n} \left\langle \xi_{t_{i}}, X_{t_{i+1} \wedge t} - X_{t_{i} \wedge t}\right\rangle,
\end{equation*}
\emph{the It{\^o}-F{\"o}llmer integral of $\xi$ with respect to $X$ on $]0,t]$ along the sequence $\Pi$}, if it converges to a finite number.
In the $1$-dimensional case, we simply write $\int_0^t \xi_{s-} dX_s$
instead of $\int_0^t \langle \xi_{s-},dX_s \rangle$.
\end{Def}

\begin{Rem} \label{2.2aa}
It is a well-known fact that 
$\int_{0}^{t} \xi_{s-} dX_s$ exists if 
$X$ or $\xi$ has locally finite variation.
When $X$ is in $FV_{\mathrm{loc}}$, the It{\^o}-F{\"o}llmer
integral is nothing but the Stieltjes integral of $s \mapsto \xi_{s-}$ by $X$.
\end{Rem}

%F{\"o}llmer~
\cite{Foellmer_1981} considers a different summation 
\begin{equation*}
\sum_{t_i \in \pi_n \cap [0,t]} \left\langle \xi_{t_{i}}, X_{t_{i+1}} - X_{t_{i}}\right\rangle
\end{equation*}
to define a pathwise It{\^o} integral.
However, they are equivalent under the right continuity of integrators.

\begin{Prop} \label{2.2b}
For $\xi, X \in D(\mathbb{R}_{\geq 0},\mathbb{R}^d)$ and $t \in \mathbb{R}_{\geq 0}$
the following two conditions are equivalent.
\begin{enumerate}
\item $\sum_{t_i \in \pi_n} \left\langle \xi_{t_{i}}, X_{t_{i+1} \wedge t} - X_{t_{i} \wedge t}\right\rangle$ converges.
\item $\sum_{t_i \in \pi_n \cap [0,t]} \left\langle \xi_{t_{i}}, X_{t_{i+1}} - X_{t_{i}}\right\rangle$ converges .
\end{enumerate}
If the conditions above are satisfied, the limits are equal. 
\end{Prop}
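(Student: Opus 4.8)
The plan is to isolate the unique partition interval that straddles $t$ and to show that the two Riemann sums differ only by a term supported on that interval, which becomes negligible as $n \to \infty$. Fix $t \in \mathbb{R}_{\geq 0}$, and for each $n$ let $j = j(n)$ be the unique index with $t^n_j \leq t < t^n_{j+1}$; such an index exists because $0 = t^n_0 < t^n_1 < \dotsb \to \infty$. In the sum of condition (i), every summand with $t^n_i > t$ vanishes, since then $t^n_i \wedge t = t^n_{i+1} \wedge t = t$, and every summand with $i \leq j-1$ equals the corresponding summand of condition (ii), since then $t^n_i \wedge t = t^n_i$ and $t^n_{i+1} \wedge t = t^n_{i+1}$. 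Therefore I would record the identities
\begin{equation*}
\sum_{t_i \in \pi_n} \left\langle \xi_{t_i}, X_{t_{i+1} \wedge t} - X_{t_i \wedge t} \right\rangle = \sum_{i=0}^{j-1} \left\langle \xi_{t^n_i}, \delta^n_i X \right\rangle + \left\langle \xi_{t^n_j}, X_t - X_{t^n_j} \right\rangle
\end{equation*}
and
\begin{equation*}
\sum_{t_i \in \pi_n \cap [0,t]} \left\langle \xi_{t_i}, X_{t_{i+1}} - X_{t_i} \right\rangle = \sum_{i=0}^{j-1} \left\langle \xi_{t^n_i}, \delta^n_i X \right\rangle + \left\langle \xi_{t^n_j}, \delta^n_j X \right\rangle,
\end{equation*}
so that the difference of the two sums in (i) and (ii) is exactly $\left\langle \xi_{t^n_j}, X_t - X_{t^n_{j+1}} \right\rangle$.

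Next I would show that this residual term tends to $0$. Being {\cadlag}, $\xi$ is bounded on the compact interval $[0,t]$, say $\lVert \xi_s \rVert \leq M_t$ for all $s \in [0,t]$, and $t^n_j \in [0,t]$; on the other hand $t < t^n_{j+1} \leq t^n_j + \lvert \pi_n \rvert \leq t + \lvert \pi_n \rvert$, so $t^n_{j+1} \to t$ from the right, and right continuity of $X$ yields $X_{t^n_{j+1}} \to X_t$. Hence
\begin{equation*}
\left\lvert \left\langle \xi_{t^n_j}, X_t - X_{t^n_{j+1}} \right\rangle \right\rvert \leq M_t \, \lVert X_{t^n_{j+1}} - X_t \rVert \longrightarrow 0 .
\end{equation*}
Thus the two sequences of Riemann sums differ by a null sequence, so one converges if and only if the other does, and in that case their limits coincide; this is precisely the assertion.

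I do not expect a genuine obstacle here: the argument is bookkeeping around one interval. The only points that require a little care are the degenerate case $t^n_j = t$, in which the boundary term $X_t - X_{t^n_j}$ already equals $0$ and the identities above remain valid verbatim, and the fact that the limit $X_{t^n_{j+1}} \to X_t$ uses the standing hypothesis $\lvert \pi_n \rvert \to 0$ — which forces $t^n_{j+1}$ to approach $t$ — together with the right continuity (rather than left continuity) of $X$, since $t^n_{j+1} > t$.
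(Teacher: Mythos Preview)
Your proof is correct and follows essentially the same approach as the paper: both identify the single partition interval containing $t$, observe that the two Riemann sums differ only by $\langle \xi_{t^n_j}, X_t - X_{t^n_{j+1}} \rangle$, and bound this by $\sup_{s \in [0,t]} \lVert \xi_s \rVert \cdot \lVert X_{t^n_{j+1}} - X_t \rVert$, which tends to zero by right continuity of $X$ and $\lvert \pi_n \rvert \to 0$. Your write-up is more explicit about the bookkeeping, but the argument is the same.
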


\begin{proof}
For $t \in [t^n_i,t^n_{i+1}[$ ($t^n_i \in \pi_n$), we have
\begin{align*}
& \left\lvert \sum_{t^n_i \in \pi_n} \left\langle \xi_{t^n_{i}}, X_{t^n_{i+1} \wedge t} - X_{t^n_{i} \wedge t}\right\rangle - \sum_{t^n_i \in \pi_n \cap [0,t]} \left\langle \xi_{t^n_{i}}, X_{t^n_{i+1}} - X_{t^n_{i}}\right\rangle \right\rvert %= \left\lvert \langle \xi_{t^n_i}, X_{t^n_{i+1}} - X_{t} \rangle \right\rvert   \notag \\
\leq \sup_{s \in [0,t]} \lVert \xi_{s} \rVert_{\mathbb{R}^d} \left\lVert X_{t^n_{i+1}} - X_{t} \right\rVert_{\mathbb{R}^d}.
\end{align*}
Letting $n \to \infty$ the last term converges to $0$ by the condition $\lvert \pi_n \rvert \to 0$ and the right continuity of $X$.
\end{proof}

In this paper, we consider integrands of the form $t \mapsto g(X_t)$ for a ``nice'' function $g$.
%F{\"o}llmer~
\cite{Foellmer_1981} proves that 
the It{\^o}-F{\"ollmer} integral of $t \mapsto \nabla f(X_t)$ with respect to $X$ exists for $C^2$-class $f$ and 
it satisfies the same relation as that of It{\^o}'s formula in stochastic calculus.
Here we prove a slight extension of F{\"o}llmer's theorem.

\begin{Th} \label{2.2c}
Let $X \in  QV(\Pi; \mathbb{R}^d)$ and $A \in FV_{\mathrm{loc}}^m$.
Then, for any function $f \in C^{1,2}(\mathbb{R}^m \times \mathbb{R}^d)$ and 
$t \in \mathbb{R}_{\geq 0}$, 
the It{\^o}-F{\"o}llmer integral of $\nabla_x f(A,X)$ with respect to $X$ exists and satisfies the following formula.
\begin{align} \label{2.2d}
f(A_t,X_t) - f(A_0,X_0) 
& = \sum_{k=1}^m \int_{0}^{t} \frac{\partial f}{\partial a_k}(A_{s-},X_{s-}) d(A^k)^{\mathrm{c}}_s + \int_0^t \left\langle \nabla_x f(A_{s-},X_{s-}), dX_s \right\rangle  \notag \\
& \quad + \sum_{k,l=1}^d \frac{1}{2} \int_{0}^{t} \frac{\partial^2 f}{\partial x_k \partial x_l} (A_{s-},X_{s-}) d[X^k,X^l]^{\mathrm{c}}_s  \notag \\
& \quad + \sum_{0 < s \leq t} \left\{ \Delta f(A_s,X_s) - \sum_{k=1}^d \frac{\partial f}{\partial x_k} (A_{s-},X_{s-}) \Delta X^k_s \right\} .
\end{align}
\end{Th}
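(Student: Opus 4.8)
The plan is to adapt the telescoping-plus-Taylor argument from the proof of Proposition~\ref{2.1m}, now keeping the first-order terms as well. Fix $t>0$. Since a {\cadlag} path is bounded on $[0,t]$, the image $(A,X)([0,t])$ is a bounded subset of $\mathbb{R}^{m+d}$, so (exactly as in Remark~\ref{2.1ya}) we may modify $f$ outside a large ball and assume $f$, $\nabla_a f$, $\nabla_x f$ and $\nabla_x^2 f$ are bounded and uniformly continuous. By Proposition~\ref{2.2b} it suffices to work with the sums over $\pi_n\cap[0,t]$ in which the last increment is truncated at $t$; with this convention $\sum_{t_i\in\pi_n\cap[0,t]}\delta^n_i f(A,X)=f(A_t,X_t)-f(A_0,X_0)$ telescopes, and the Riemann sum of Definition~\ref{2.2a} becomes $\sum_{t_i\in\pi_n\cap[0,t]}\langle\nabla_x f(A_{t^n_i},X_{t^n_i}),\delta^n_i X\rangle$. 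Splitting each increment at the intermediate point $(A_{t^n_i},X_{t^n_{i+1}})$ and Taylor-expanding (to first order in $a$ about $A_{t^n_i}$, to second order in $x$ about $X_{t^n_i}$) gives, for every $i$,
\[
\delta^n_i f(A,X)=\langle\nabla_x f(A_{t^n_i},X_{t^n_i}),\delta^n_i X\rangle+\langle\nabla_a f(A_{t^n_i},X_{t^n_i}),\delta^n_i A\rangle+\tfrac12\langle\nabla_x^2 f(A_{t^n_i},X_{t^n_i})\,\delta^n_i X,\delta^n_i X\rangle+R^n_i ,
\]
where $R^n_i=R^{(x),n}_i+R^{(a),n}_i$ with $|R^{(x),n}_i|\le\tfrac12\,\omega(\nabla_x^2 f;\lVert\delta^n_i X\rVert)\,\lVert\delta^n_i X\rVert^2$ and $|R^{(a),n}_i|\le 2\,\omega(\nabla_a f;\lVert\delta^n_i A\rVert+\lVert\delta^n_i X\rVert)\,\lVert\delta^n_i A\rVert$.

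Fix $p\in\mathbb{N}_{\geq1}$ and split $\sum_{t_i\in\pi_n\cap[0,t]}=\sum_{(1,n,p)}+\sum_{(2,n,p)}$ by means of the finite set $D_p(t)$, exactly as in the proof of Proposition~\ref{2.1m}. Summing the identity above over the indices in $\sum_{(2,n,p)}$ and isolating its first term expresses $\sum_{(2,n,p)}\langle\nabla_x f(A_{t^n_i},X_{t^n_i}),\delta^n_i X\rangle$ as $\sum_{(2,n,p)}\delta^n_i f(A,X)$ minus the first-order-in-$a$ Riemann sum, minus the second-order-in-$x$ sum, minus $\sum_{(2,n,p)}R^n_i$. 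As $n\to\infty$: the finite sums $\sum_{(1,n,p)}\delta^n_i f(A,X)$ and $\sum_{(1,n,p)}\langle\nabla_x f,\delta^n_i X\rangle$ converge to $\sum_{s\in D_p(t)}\Delta f(A_s,X_s)$ and $\sum_{s\in D_p(t)}\langle\nabla_x f(A_{s-},X_{s-}),\Delta X_s\rangle$; the full first-order-in-$a$ Riemann sum converges, by Remark~\ref{2.2aa}, to the Stieltjes integral $\sum_k\int_0^t\frac{\partial f}{\partial a_k}(A_{s-},X_{s-})\,dA^k_s$; the full second-order-in-$x$ sum converges, by Lemma~\ref{2.1g}, to $\tfrac12\sum_{k,l}\int_0^t\frac{\partial^2 f}{\partial x_k\partial x_l}(A_{s-},X_{s-})\,d[X^k,X^l]_s$; and, since every jump of $(A,X)$ met by an interval counted in $\sum_{(2,n,p)}$ has size $<1/p$, the oscillation property of {\cadlag} paths recalled in the introduction gives $\limsup_n\max_{(2,n,p)}(\lVert\delta^n_i X\rVert+\lVert\delta^n_i A\rVert)\le C/p$, so that $\limsup_n\sum_{(2,n,p)}|R^n_i|\le\varepsilon_p$ with $\varepsilon_p:=\tfrac12\,\omega(\nabla_x^2 f;C/p)\sum_k[X^k,X^k]_t+2\,\omega(\nabla_a f;C/p)\sum_k V(A^k)_t\to0$.

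Collecting these limits and splitting each $dA^k$ and each $d[X^k,X^l]$ into their continuous parts $d(A^k)^{\mathrm c}$, $d[X^k,X^l]^{\mathrm c}$ and their jumps produces an explicit finite number $\mathcal{R}_p$ such that both $\liminf_n$ and $\limsup_n$ of the Riemann sum $\sum_{t_i\in\pi_n\cap[0,t]}\langle\nabla_x f(A_{t^n_i},X_{t^n_i}),\delta^n_i X\rangle$ lie within $\varepsilon_p$ of $\mathcal{R}_p$. Letting $p\to\infty$ is the main obstacle: $\sum_{0<s\le t}\lVert\Delta X_s\rVert$ may be infinite, so one must \emph{not} split $\sum_{s\in D_p(t)}\langle\nabla_x f(A_{s-},X_{s-}),\Delta X_s\rangle$ away from $\sum_{s\in D_p(t)}\Delta f(A_s,X_s)$; instead one recombines them as $-\sum_{s\in D_p(t)}\{\Delta f(A_s,X_s)-\langle\nabla_x f(A_{s-},X_{s-}),\Delta X_s\rangle\}$, which converges absolutely because each summand is bounded by $\lVert\nabla_a f\rVert_\infty\lVert\Delta A_s\rVert+\tfrac12\lVert\nabla_x^2 f\rVert_\infty\lVert\Delta X_s\rVert^2$, while $\sum_s\lVert\Delta A_s\rVert\le V(A)_t<\infty$ and $\sum_s\lVert\Delta X_s\rVert^2<\infty$. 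The residual sums over $(0,t]\setminus D_p(t)$ — which involve $\frac{\partial f}{\partial a_k}(A_{s-},X_{s-})\Delta A^k_s$ and $\langle\nabla_x^2 f(A_{s-},X_{s-})\Delta X_s,\Delta X_s\rangle$, both absolutely summable — tend to $0$. Hence $\mathcal{R}_p$ converges, $\liminf_n$ and $\limsup_n$ of the Riemann sums both equal $\lim_{p\to\infty}\mathcal{R}_p$, so the It{\^o}-F{\"o}llmer integral $\int_0^t\langle\nabla_x f(A_{s-},X_{s-}),dX_s\rangle$ exists; and reading off $\lim_{p\to\infty}\mathcal{R}_p$ term by term gives exactly \eqref{2.2d}.

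For orientation, when $f\in C^2(\mathbb{R}^{m+d})$ the statement already follows from F{\"o}llmer's theorem~\cite{Foellmer_1981} applied to the path $(A,X)$, which lies in $QV(\Pi;\mathbb{R}^{m+d})$ with $[A^k,A^l]^{\mathrm c}=[A^k,X^l]^{\mathrm c}=0$ by Corollary~\ref{2.1z}: splitting the Stieltjes integral $\int_0^t\langle\nabla_a f(A_{s-},X_{s-}),dA_s\rangle$ into its continuous and jump parts and cancelling the jump part against the corresponding piece of F{\"o}llmer's jump sum yields \eqref{2.2d}. Besides the limit interchange above (and the oscillation estimate that makes $\sum_{(2,n,p)}R^n_i$ uniformly small in large $n$), the rest is precisely the bookkeeping needed to weaken the hypothesis from $C^2$ to $C^{1,2}$.
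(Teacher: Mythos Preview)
Your proof is correct and follows essentially the same approach as the paper: Taylor-expand each increment (first order in $a$, second order in $x$), split the index set via the finite set $D_p(t)$ of large jumps, identify the limits of each piece using Lemma~\ref{2.1g} and Stieltjes convergence, bound the remainders by moduli of continuity, and then let $p\to\infty$ using the absolute summability of $\Delta f(A_s,X_s)-\langle\nabla_x f(A_{s-},X_{s-}),\Delta X_s\rangle$. Your packaging into $\mathcal{R}_p$ and $\varepsilon_p$ is more compact than the paper's explicit $I_1^{(n)},\dots,I_8^{(n)}$, and your closing observation on the $C^2$ case via F{\"o}llmer's original theorem is a nice addition, but the substance is the same.
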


\begin{proof}
Fix $t>0$.
Because $(A,X)$ has bounded range on $[0,t]$, we can assume $f$ has compact support.
First observe that 
the formula \eqref{2.2d} is equivalent to 
\begin{align} \label{2.2da}
& f(A_t,X_t) - f(A_0,X_0) - \int_0^t \left\langle \nabla_x f(A_{s-},X_{s-}), dX_s \right\rangle   \notag \\
& = \sum_{k=1}^m \int_{0}^{t} \frac{\partial f}{\partial a_k}(A_{s-},X_{s-}) dA^k_s  + \sum_{k,l=1}^d \frac{1}{2} \int_{0}^{t} \frac{\partial^2 f}{\partial x_k \partial x_l} (A_{s-},X_{s-}) d[X^k,X^l]_s   \notag \\
& \quad - \sum_{0 < s \leq t} \sum_{k,l =1}^d \frac{\partial^2 f }{\partial x_k \partial x_l}(A_{s-},X_{s-}) \Delta X_s^k \Delta X_s^l   \notag \\
& \quad + \sum_{0 < s \leq t} \left\{ \Delta f(A_s,X_s) - \sum_{k=1}^d \frac{\partial f}{\partial x_k} (A_{s-},X_{s-}) \Delta X^k_s   - \sum_{k=1}^m \frac{\partial f}{\partial a_k}(A_{s-},X_{s-}) \Delta A^k_s \right\}.
\end{align}
This follows from the fact that every series in \eqref{2.2d} and \eqref{2.2da} converges absolutely. 
The convergence of series is proved immediately by using the assumption that $f$ is a $C^{1,2}$ function
with bounded derivatives, and that $\sum_{s \leq t} (\Delta X_s)^2$ and $\sum_{s \leq t} \lvert \Delta A_s \rvert$ converges. 
So we will consider \eqref{2.2da} in this proof.

The next step of the proof is a second-order Taylor expansion.
Note that
\begin{equation} \label{2.2db}
f(A_t,X_t) - f(A_0,X_0) = \lim_{n \to \infty} \sum_{t_i \in \pi_n \cap [0,t]} \delta^n_i f(A,X)
\end{equation}
follows from the right continuity of $X$ and the continuity of $f$.
By Taylor's theorem, we have
\begin{align*}
\delta^n_i f(A,X)
& = \left\langle \nabla_a f(A_{t^n_i},X_{t^n_i}) , \delta^n_i A \right\rangle + \left\langle r^n_i, \delta^n_i A \right\rangle + \left\langle \nabla_x f(A_{t_i},X_{t_i}), \delta^n_i X \right\rangle,  \notag \\
& \quad + \frac{1}{2} \left\langle \nabla_x^2 f(A_{t^n_i},X_{t^n_i}) \delta^n_i X, \delta^n_i X \right\rangle + \left\langle R^n_i \delta^n_i X, \delta^n_i X \right\rangle.
\end{align*}
Here, $R^n_i$ and $r^n_i$ are defined by
\begin{gather*}
r^n_i = \nabla_a f(A_{t_i} + \theta_i^n (\delta^n_i A),X_{t_i+1}) - \nabla_a f(A_{t_i},X_{t_i}) \in \mathbb{R}^m,   \\
R^n_i = \nabla_x^2f(A_{t_i},X_{t_i} + \Theta_i^n (\delta^n_i X)) - \nabla_x^2 f(A_{t_i},X_{t_i}) \in M_d(\mathbb{R}),
\end{gather*}
for some $\theta^n_i$ and $\Theta^n_i \in [0,1]$.
Taking the summation over all $t_i$ in $\pi_n \cap [0,t]$, we obtain
\begin{align}
& \sum_{t_i \in \pi_n \cap [0,t]} \delta^n_i f(A,X) - \sum_{t_i \in \pi_n \cap [0,t]}  \left\langle \nabla_x f(A_{t^n_i},X_{t^n_i}), \delta^n_i X \right\rangle  \notag \\
& = \sum_{(1,n,p)} \delta^n_i f(A,X) - \sum_{(1,n,p)} \left\langle \nabla_x f(A_{t^n_i},X_{t^n_i}), \delta^n_i X \right\rangle\notag   \\
& \quad  - \sum_{(1,n,p)} \left\langle \nabla_a f(A_{t^n_i},X_{t^n_i}) ,  \delta^n_i A \right\rangle - \frac{1}{2} \sum_{(1,n,p)} \left\langle \nabla_x^2 f(A_{t^n_i},X_{t^n_i})  \delta^n_i X, \delta^n_i X \right\rangle  \notag \\
& \quad + \sum_{t_i \in \pi_n \cap [0,t]} \left\langle \nabla_a f(A_{t^n_i},X_{t^n_i}) , \delta^n_i A \right\rangle + \frac{1}{2}  \sum_{t_i \in \pi_n \cap [0,t]} \left\langle \nabla_x^2 f(A_{t_i},X_{t_i}) \delta^n_i X, \delta^n_i X \right\rangle \notag \\
& \quad + \sum_{(2,n,p)}\left\langle R^n_i \delta^n_i X, \delta^n_i X \right\rangle + \sum_{(2,n,p)} \left\langle r^n_i, \delta^n_i A \right\rangle   \notag \\
& =: I_1^{(n)} - I_2^{(n)} - I_3^{(n)} - \frac{1}{2} I_4^{(n)} + I_5^{(n)} + \frac{1}{2} I_6^{(n)} + I_7^{(n)} + I_8^{(n)} .  \label{2.2e}
\end{align}
Here the notation $D(t)$, $D_p(t)$, $\sum_{(1,n,p)}$, and $\sum_{(2,n,p)}$
are used with the same meaning as in the proof of Proposition~\ref{2.1m}.
We will consider the behavior of each term of the right-hand side of \eqref{2.2e} in the rest of this proof.

Since $D_p(t)$ is finite, it is easy to see that 
\begin{align*}
& \lim_{n \to \infty} I_1^{(n)} = \sum_{s \in D_p(t)} [\Delta f(A_s,X_s)] ,  \\
& \lim_{n \to \infty} I_2^{(n)} = \sum_{s \in D_p(t)} \sum_{k=1}^d \frac{\partial f}{\partial x_k}(A_{s-},X_{s-}) \Delta X_s^k ,  \\
& \lim_{n \to \infty} I_3^{(n)} = \sum_{s \in D_p(t)} \sum_{k=1}^m \frac{\partial f}{\partial a_k} (A_{s-},X_{s-}) \Delta A_s^k ,  \\
& \lim_{n \to \infty} I_4^{(n)} = \sum_{s \in D_p(t)} \sum_{k,l=1}^d \frac{\partial^2 f}{\partial x_k \partial x_l}(A_{s-},X_{s-}) \Delta X_s^k \Delta X_s^l.
\end{align*}
By the dominated convergence theorem, we have
\begin{equation*}
\lim_{n \to \infty} I_5^{(n)} = \sum_{k=1}^m \int_0 ^{t} \frac{\partial f}{\partial a_k}(A_{s-},X_{s-}) dA^k_s.
\end{equation*}
Lemma~\ref{2.1g} implies that
\begin{equation*}
\lim_{n \to \infty} I_6^{(n)} = \sum_{k,l=1}^d \int_0^{t} \frac{\partial f}{\partial x_k \partial x_l} (A_{s-},X_{s-}) d[X^k,X^l]_s.
\end{equation*}
By the definition of $\sum_{(2,n,p)}$ and the residual terms, we have
\begin{equation*}
\varlimsup_{n \to \infty} \left\lvert I_7^{(n)} \right\rvert \leq \omega \left( \nabla_x^2 f; \frac{2}{p} \right)  \sum_{k=1}^d \left[ X^k,X^k \right]_t, \quad \varlimsup_{n \to \infty} \left\lvert I_8^{(n)} \right\rvert \leq \omega \left( \nabla_a f; \frac{2}{p} \right)  \sum_{k=1}^m V(A^k)_t.
\end{equation*}

These observations imply that
\begin{equation*}
\varlimsup_{n \to \infty} \left\lvert (\text{RHS of \eqref{2.2e}}) - (\text{RHS of \eqref{2.2da}}) \right\rvert \leq \varphi(p).
\end{equation*}
Here $\varphi(p) \geq 0$ is a quantity that depends on $p$ and satisfies $\lim_{p \to \infty} \varphi(p) = 0$. 
Hence, the right-hand side of \eqref{2.2e} converges to that of \eqref{2.2da}.
This together with \eqref{2.2db} shows the convergence of all terms of \eqref{2.2e} except
\begin{equation*} %\label{2.2f}
\sum_{t_i \in \pi_n \cap [0,t]}  \left\langle \nabla_x f(A_{t^n_i},X_{t^n_i}), \delta^n_i X \right\rangle,
\end{equation*}
and the formula \eqref{2.2d}.
\end{proof}

\begin{Rem} \label{2.2fa}
The assumptions of Theorem~\ref{2.2c} can be relaxed.
Consider a $C^{1,2}$ function $f$ on a open set $U \subset \mathbb{R}^m \times \mathbb{R}^{d}$.
Suppose that, on each interval $[0,t]$, the function $(X,A) \in QV(\Pi;\mathbb{R}^d) \times FV_{\mathrm{loc}}^m$
takes its value in some compact subset of $U$.
Then the It{\^o}-F{\"o}llmer integral $\int_0^t \langle \nabla_x f(A_{s-},X_{s-}), dX_s \rangle$
exists and satisfies \eqref{2.2d}.
For example, we can apply the It{\^o}-F{\"o}llmer formula to $\log X_t$
if $X$ satisfies $\inf_{s \in [0,t]}X_{s} > 0$ for each $t$.
\end{Rem}

\begin{Rem} \label{2.2g}
If a path $(A,X)$ is continuous, the convergence of the It{\^o}-F{\"o}llmer integral in Theorem~\ref{2.2c}
holds uniformly on compacts.
See
%Davis, Ob{\l}{\'o}j, and Siorpaes~
\cite[Theorem 2.5]{Davis_Obloj_Siorpaes_2017}.
\end{Rem}

As an application of Theorem~\ref{2.2c}, we have the following integration by parts formula
for the It{\^o}-F{\"o}llmer integral.

\begin{Cor} \label{2.2h}
Let $(X,Y) \in QV(\Pi;\mathbb{R}^2)$.
If one of the It{\^o}-F{\"o}llmer integrals $\int_{0}^t Y_{s-} dX_s$ and $\int_{0}^t X_{s-} dY_s$ exist,
then the other also exists and they satisfy
\begin{equation} \label{2.2i}
X_t Y_t - X_0 Y_0 = \int_0^t Y_{s-} dX_s + \int_0^t X_{s-} dY_s + [X,Y]_t, \quad t \geq 0.
\end{equation}
\end{Cor}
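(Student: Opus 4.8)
The plan is to derive the integration by parts formula as a direct application of the It\^o--F\"ollmer formula (Theorem~\ref{2.2c}) applied to the product function, combined with the algebraic identity relating the product increment to the two Riemann sums. First I would apply Theorem~\ref{2.2c} with $m=0$ (no finite-variation component, or equivalently $A$ trivial), $d=2$, the path $(X,Y) \in QV(\Pi;\mathbb{R}^2)$, and the function $f(x,y) = xy \in C^2(\mathbb{R}^2)$. Since $\nabla f(x,y) = (y,x)$ and $\frac{\partial^2 f}{\partial x \partial y} = 1$ with all other second derivatives vanishing, the formula would read
\begin{align*}
X_t Y_t - X_0 Y_0
&= \int_0^t \left\langle (Y_{s-},X_{s-}), d(X,Y)_s \right\rangle + \int_0^t 1 \cdot d[X,Y]^{\mathrm{c}}_s \\
&\quad + \sum_{0 < s \leq t} \left\{ \Delta(X_s Y_s) - Y_{s-}\Delta X_s - X_{s-}\Delta Y_s \right\}.
\end{align*}
The vector-valued integral $\int_0^t \langle (Y_{s-},X_{s-}), d(X,Y)_s\rangle$ is, by the very definition of the It\^o--F\"ollmer integral of a vector integrand (Definition~\ref{2.2a}), the sum $\int_0^t Y_{s-}\,dX_s + \int_0^t X_{s-}\,dY_s$; one should note here that Theorem~\ref{2.2c} asserts existence of the combined (vector) integral, while the corollary's hypothesis gives existence of one of the two scalar pieces, hence existence of the other by subtraction.

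The remaining work is purely algebraic: I would show that the jump sum plus the continuous covariation term collapse to $[X,Y]_t$. Using $\Delta(X_s Y_s) = X_{s-}\Delta Y_s + Y_{s-}\Delta X_s + \Delta X_s \Delta Y_s$, the bracketed expression in the jump sum is exactly $\Delta X_s \Delta Y_s$, so the jump sum equals $\sum_{0<s\leq t}\Delta X_s \Delta Y_s = [X,Y]^{\mathrm{d}}_t$, the purely discontinuous part of the quadratic covariation (recall $\Delta[X,Y]^{\Pi}_s = \Delta X_s \Delta Y_s$ from the characterization of $QV(\Pi;\mathbb{R}^2)$ given after Definition~\ref{2.1b}, so this series converges absolutely). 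Adding $[X,Y]^{\mathrm{c}}_t$ gives $[X,Y]_t$, which yields \eqref{2.2i}.

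There is essentially no hard step here; the only point requiring a little care is the logical bookkeeping around existence. Theorem~\ref{2.2c} guarantees that the vector integral $\int_0^t\langle(Y_{s-},X_{s-}),d(X,Y)_s\rangle$ exists, i.e.\ that the Riemann sums $\sum_{t_i\in\pi_n}\big(Y_{t_i}(X_{t_{i+1}\wedge t}-X_{t_i\wedge t}) + X_{t_i}(Y_{t_{i+1}\wedge t}-Y_{t_i\wedge t})\big)$ converge. If in addition one of $\int_0^t Y_{s-}\,dX_s$, $\int_0^t X_{s-}\,dY_s$ exists (the corollary's hypothesis), then the other is the difference of two convergent sequences and hence also exists, with the sum of the two scalar limits equal to the vector limit. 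I would state this splitting explicitly. Everything else reduces to reading off the derivatives of $f(x,y)=xy$ and invoking the jump identity above, so the proof is short.
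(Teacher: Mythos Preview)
Your proposal is correct and follows essentially the same approach as the paper: apply Theorem~\ref{2.2c} to $f(x,y)=xy$, obtain the vector It\^o--F\"ollmer integral $\int_0^t\langle\nabla f(X_{s-},Y_{s-}),d(X,Y)_s\rangle$ together with the quadratic covariation term, and then use the hypothesis that one of the scalar integrals exists to deduce existence of the other by subtraction. The paper's version is terser (it writes $[X,Y]_t$ directly rather than spelling out the jump-sum simplification $\Delta(X_sY_s)-Y_{s-}\Delta X_s-X_{s-}\Delta Y_s=\Delta X_s\Delta Y_s$ and the recombination $[X,Y]^{\mathrm{c}}+[X,Y]^{\mathrm{d}}=[X,Y]$), but the logical structure is identical.
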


\begin{proof}
It suffices to prove the case where $\int_{0}^t Y_{s-} dX_s$ exists.
Applying Theorem~\ref{2.2c} to $f(x,y) = xy$, we get
\begin{equation*}
X_t Y_t - X_0 Y_0 = \int_0^t \langle \nabla f(X_{s-},Y_{s-}) ,d (X, Y)_s \rangle + [X,Y]_t, \quad t \geq 0.
\end{equation*}
Moreover, we have
\begin{equation*}
\int_{0}^{t} X_{s-} dY_s = \int_0^t \langle \nabla f(X_{s-},Y_{s-}) ,d (X, Y)_s \rangle - \int_{0}^{t} Y_{s-} dX_s,
\end{equation*}
because both of the It{\^o}-F{\"o}llmer integrals $\int_{0}^t Y_{s-} dX_s$ and $\int_0^t \langle \nabla f(X_{s-},Y_{s-}) ,d (X, Y)_s \rangle$ exist.
Thus we obtain \eqref{2.2i}
\end{proof}

\subsection{It{\^o}-F{\"o}llmer integrals of admissible integrands}

In this subsection, we study some more basic properties of
the It{\^o}-F{\"o}llmer integrals with respect to $X$ for integrands
of the form $\nabla_x f(A_{t},X_{t})$.

\begin{Def} \label{2.3a}
Let $X \in QV(\Pi; \mathbb{R}^d)$.
A $d$-dimensional {\cadlag} path $\xi$ is called an \emph{admissible integrand of $X$}
if, for any $T>0$, there exist $m \in \mathbb{N}$, a function $f$ of $C^{1,2}$-class defined on a subset $U$ of $\mathbb{R}^m \times \mathbb{R}^d$,
and $A \in FV_{\mathrm{loc}}^m$ such that
$\xi_t = \nabla_x f(A_t,X_t)$ holds for all $t \in [0,T]$
and the image of the map $[0,T] \ni t \mapsto (A_t,X_t) \in U$ is relatively compact in $U$.
\end{Def}

For the terminology ``admissible integrand'', we follow
%Schied~
\cite{Schied_2014}.

\begin{Rem} \label{2.3b}
\begin{enumerate}
\item For a $d$-dimensional admissible integrand $\xi = (\xi^1,\dots,\xi^d)$
	of $X \in QV(\Pi;\mathbb{R}^d)$, the equation
	\begin{equation*}
	\int_0^t \left\langle \xi_{s-}, dX_s \right\rangle = \sum_{i=1}^d \int_0^t \xi^i_{s-} dX^i_s
	\end{equation*}
	is not necessarily valid.
	Note that the existence of the It{\^o}-F{\"o}llmer integral of the left-hand side 
	does not imply the existence of those of the right-hand side.
\item If $\xi$ and $\eta$ are admissible integrands of $X \in QV(\Pi;\mathbb{R}^d)$,
	their linear combination $\alpha \xi + \beta \eta$ ($\alpha,\beta \in \mathbb{R}$) is again
	an admissible integrand that satisfies
	\begin{equation} \label{2.3c}
	\int_0^t \left\langle \alpha \xi_{s-}+ \beta \eta_{s-},dX_s \right\rangle
	= \alpha \int_0^t \left\langle \xi_{s-},dX_s \right\rangle + \beta \int_0^t \left\langle \eta_{s-},dX_s \right\rangle
	\end{equation}
	for all $t \in [0,T]$.
	In other words, the space of admissible integrands of $X$ is a vector space and
	the map $\xi \mapsto \int_{0}^{t} \xi_{s-} dX_s$ on it is linear.
\item Let $X \in QV(\Pi)$, and $\xi_t = g(A_t,X_t)$ ($t \geq 0$), where $g \in C^1(\mathbb{R}^m \times \mathbb{R})$
	and $A \in FV_{\mathrm{loc}}^m$. Then $\xi$ is an admissible integrand of $X$.
	Indeed, $\xi_t = \frac{\partial}{\partial x} f(A_t,X_t)$ holds for $f(a,x) = \int_0^x g(a,y) dy$.
\end{enumerate}
\end{Rem}

\begin{Rem} \label{2.3d}
Let $\xi$ be an admissible integrand of $X \in QV(\Pi;\mathbb{R})$ with a 
representation $\xi_t = \frac{d}{dx}f(B_{t},X_{t})$ for $t \in [0,T]$.
Then, for any $A \in FV_{\mathrm{loc}}$, the path $\xi$ is an admissible integrand of $Y := X+A$ and satisfies, for all $t \in [0,T]$,
\begin{equation} \label{2.3e}
\int_0^t \xi_{s-} dY_s = \int_0^t \xi_{s-} dX_s + \int_0^t \xi_{s-} dA_s.
\end{equation}
In fact, set $F(a,b,y) = f(b,y-a)$, and we obtain 
a representation $\xi_t = \frac{\partial F}{\partial y}(A_t,B_t,Y_t)$.
\end{Rem}

We will now study the quadratic variation of a path given by It{\^o}-F{\"o}llmer integration.

\begin{Prop} \label{2.3f}
Let $X \in QV(\Pi;\mathbb{R}^d)$ and let 
$\xi: \mathbb{R}_{\geq 0} \to \mathbb{R}^d$ be an admissible integrand of $X$.
Define $Y_t = \int_0^t \langle \xi_{s-},dX_s \rangle$ for $t \in \mathbb{R}_{\geq 0}$.
Then, it holds that $Y \in QV(\Pi;\mathbb{R})$ and 
\begin{equation*}
[Y,Y]_t = \sum_{k,l=1}^d \int_{0}^{t} \xi^{k}_{s-} \xi^{l}_{s-} d[X^k,X^l]_s.
\end{equation*}
\end{Prop}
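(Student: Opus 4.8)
The plan is to reduce the claim to a statement about the convergence of discrete quadratic variations, and then exploit the fact that the Riemann sums defining $Y$ can be compared, increment by increment, with the ``frozen-coefficient'' sums $\sum \langle \xi_{t_i}, \delta^n_i X\rangle$. The first step is to fix $t>0$ and pass to a representation $\xi_s = \nabla_x f(A_s, X_s)$ valid on $[0,t]$ with $(A,X)$ taking values in a compact subset of $U$; by shrinking $U$ and modifying $f$ we may assume $f \in C^{1,2}(\mathbb{R}^m\times\mathbb{R}^d)$ with bounded derivatives, and in particular that $\xi$ is bounded on $[0,t]$ with $\Delta \xi_s = \nabla_x f(A_s,X_s) - \nabla_x f(A_{s-},X_{s-})$. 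Then I would verify the jump condition $\Delta[Y,Y]_t = (\Delta Y_t)^2$: from the definition of the It{\^o}-F{\"o}llmer integral one reads off $\Delta Y_s = \langle \xi_{s-}, \Delta X_s\rangle$, and the proposed right-hand side has jump $\sum_{k,l}\xi^k_{s-}\xi^l_{s-}\Delta X^k_s\Delta X^l_s = \langle \xi_{s-},\Delta X_s\rangle^2$, so the two jumps agree. This handles condition (ii) of Definition~\ref{2.1a}.

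For condition (i), I would write $[Y,Y]^{\pi_n}_t = \sum_{t_i\in\pi_n}(\delta^n_i Y\wedge t)^2$ (with the usual truncation) and compare it with $\sigma_n := \sum_{t_i\in\pi_n\cap[0,t]}\langle \xi_{t_i},\delta^n_i X\rangle^2$. The increment $\delta^n_i Y$ is itself an It{\^o}-F{\"o}llmer integral over $]t_i,t_{i+1}]$, and the difference $\delta^n_i Y - \langle \xi_{t_i},\delta^n_i X\rangle$ is controlled by the oscillation of $\xi$ on that subinterval together with Theorem~\ref{2.2c} applied to $f$ — more precisely, applying the It{\^o}-F{\"o}llmer formula to $f$ shows $\delta^n_i Y$ equals $\delta^n_i f(A,X)$ minus the finite-variation and bracket contributions over $]t_i,t_{i+1}]$, and a second-order Taylor expansion of $\delta^n_i f(A,X)$ (exactly as in the proof of Proposition~\ref{2.1m}) gives $\delta^n_i Y = \langle\xi_{t_i},\delta^n_i X\rangle + (\text{terms of order } \|\delta^n_i A\| \text{ or } \omega(\nabla_x f;\cdot)\cdot\|\delta^n_i X\|)$. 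Squaring and summing, the cross terms and error terms are bounded by $\omega(\nabla_x f;|\pi_n|)$ times $\sum_k[X^k,X^k]_t$ plus $V(A)_t$-type quantities times $\sup_i\|\delta^n_i A\|$, both of which vanish as $n\to\infty$. Hence $\lim_n[Y,Y]^{\pi_n}_t = \lim_n \sigma_n$ provided the latter exists.

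It remains to identify $\lim_n\sigma_n$. Expanding, $\sigma_n = \sum_{k,l}\sum_{t_i\in\pi_n\cap[0,t]}\xi^k_{t_i}\xi^l_{t_i}\,\delta^n_i X^k\,\delta^n_i X^l = \sum_{k,l}\int_{[0,t]}(\xi^k\xi^l)(X_s,A_s)\,\mu^{\pi_n}_{X^k,X^l}(ds)$ in the notation preceding Lemma~\ref{2.1g}, after absorbing the truncation-at-$t$ discrepancy (which is $O(\sup_s\|\xi_s\|^2\,\|X_{t_{i+1}}-X_t\|\cdot|\pi_n|)\to 0$, as in Proposition~\ref{2.1d}). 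Writing $\xi^k_{t_i}\xi^l_{t_i} = g_{kl}(X_{t_i},A_{t_i})$ with $g_{kl} = \frac{\partial f}{\partial x_k}\frac{\partial f}{\partial x_l}$ continuous, Lemma~\ref{2.1g} applies directly and yields
\begin{equation*}
\lim_{n\to\infty}\sigma_n = \sum_{k,l=1}^d\int_{[0,t]} \xi^k_{s-}\xi^l_{s-}\,d[X^k,X^l]^{\Pi}_s,
\end{equation*}
which is precisely the asserted formula. Combining the three steps gives $Y\in QV(\Pi;\mathbb{R})$ with the stated bracket. The main obstacle I anticipate is the bookkeeping in the second step: making rigorous that $\delta^n_i Y$, which is a priori only a limit of sub-Riemann sums over $]t_i,t_{i+1}]$, differs from $\langle\xi_{t_i},\delta^n_i X\rangle$ by a genuinely small quantity uniformly in $i$ — this requires running the Taylor-expansion argument of Theorem~\ref{2.2c}/Proposition~\ref{2.1m} ``locally'' on each subinterval and then summing the squared errors, and the care needed is in tracking how the large-jump set $D_p(t)$ interacts with the partition so that the error bounds survive squaring.
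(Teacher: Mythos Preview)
Your approach is correct in principle but takes a considerably more laborious route than the paper. You propose to compare $\sum_i(\delta^n_i Y)^2$ directly with $\sigma_n=\sum_i\langle\xi_{t_i},\delta^n_i X\rangle^2$, which forces you to control the error $\delta^n_i Y-\langle\xi_{t_i},\delta^n_i X\rangle$ increment by increment; as you correctly anticipate, this means re-running the full Taylor-expansion-plus-$D_p(t)$-separation machinery of Proposition~\ref{2.1m}, now with extra finite-variation correction terms coming from It{\^o}'s formula, and then checking that all squared and cross error terms survive the summation. (Your sketch understates this: the error contains a second-order piece $\tfrac12\langle\nabla_x^2 f\,\delta^n_i X,\delta^n_i X\rangle$ minus the increment of the bracket integral, and neither $\sup_i\lVert\delta^n_i X\rVert$ nor $\sup_i\lVert\delta^n_i A\rVert$ tends to zero in the c\`adl\`ag case, so the large-jump splitting is genuinely needed, not just bookkeeping.)

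The paper sidesteps all of this with a two-line reduction. By Theorem~\ref{2.2c}, the path $B_t:=Y_t-f(A_t,X_t)+f(A_0,X_0)$ is of locally finite variation, with $\Delta B_t=-\Delta f(A_t,X_t)+\langle\xi_{t-},\Delta X_t\rangle$. Hence $Y=f(A,X)-f(A_0,X_0)+B$, and one simply invokes Proposition~\ref{2.1m} (already proved) for $[f(A,X),f(A,X)]$ and Corollary~\ref{2.1z} for the finite-variation perturbation $B$: this gives $[Y,Y]_t=[f(A,X),f(A,X)]_t+\sum_{s\le t}\{(\Delta B_s)^2-2\Delta B_s\,\Delta f(A_s,X_s)\}$, and a short algebraic simplification of the jump terms yields the claimed formula. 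The advantage of the paper's argument is that all the delicate Taylor/jump-separation work has already been packaged into Proposition~\ref{2.1m}, so there is nothing left to redo; your approach essentially reproves that proposition inside the proof of this one.
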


\begin{proof}
Fix $T>0$ and an expression $\xi = (\nabla_x f) \circ (A,X)$ on $[0,T]$, as in Definition~\ref{2.3a}.
Define
\begin{equation}
B_t = Y_t - f(A_t,X_t) - f(A_0,X_0), \quad t \in [0,T].
\end{equation}
Then, by Theorem~\ref{2.2c}, $B$ is a {\cadlag} path of locally finite variation whose jumps are given by
$$ \Delta B_t = \Delta f(A_t,X_t) - \sum_{i=1}^d \frac{\partial f}{\partial x_i}(A_{t-},X_{t-}) \Delta X^i_t. $$
Using Proposition~\ref{2.1m} and Corollary~\ref{2.1z}, we can directly calculate
the quadratic variation of $Y$ as follows.
\begin{align*}
\lbrack Y,Y \rbrack_t
& = \lbrack f(A,X),f(A,X) \rbrack_t + \sum_{0 \leq s \leq t}\left[ (\Delta B_s)^2 - 2 \Delta B_s \Delta f(A_s,X_s)\right] \\
& = \sum_{i,j=1}^d \int_0^t \left( \frac{\partial f}{\partial x_i} \frac{\partial f}{\partial x_j} \right) (A_{s-},X_{s-}) d[X^i,X^j]_s.
\end{align*}
Since $T > 0$ is arbitrarily fixed, this formula is extended to $\mathbb{R}_{\geq 0}$.
\end{proof}

The next proposition is a multi-dimensional version of Proposition~\ref{2.3f}.

\begin{Prop} \label{2.3g}
Let $X \in QV(\Pi; \mathbb{R}^d)$, let $\xi^{(1)},\dots,\xi^{(m)}$ be admissible integrands of $X$,
and let $Y^l_t = \int_0^t \left\langle \xi_{s-}^{(l)},dX_s \right\rangle$ for all $t \geq 0$ and $l \in \{1,\dots, m \}$. 
Then $Y=(Y^1,\dots,Y^m)$ belongs to $QV(\Pi; \mathbb{R}^m)$ and its quadratic covariations have the following form.
\begin{equation} \label{2.3gb}
\left\lbrack Y^k,Y^l \right\rbrack_t = \sum_{i,j =1}^d \int_{0}^{t} \xi^{(k),i}_{s-} \xi^{(l),j}_{s-} d\!\left\lbrack X^i,X^j \right\rbrack_s.
\end{equation}
\end{Prop}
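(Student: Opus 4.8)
The plan is to reduce the statement to the one-dimensional case, Proposition~\ref{2.3f}, via polarization. First I would observe that, by Remark~\ref{2.3b}~(ii), for each pair $k,l \in \{1,\dots,m\}$ the sum $\xi^{(k)} + \xi^{(l)}$ is again an admissible integrand of $X$ and
\begin{equation*}
\int_0^t \left\langle \xi^{(k)}_{s-} + \xi^{(l)}_{s-}, dX_s \right\rangle = Y^k_t + Y^l_t
\end{equation*}
for all $t \geq 0$. Here one recalls that an admissible integrand is defined interval by interval, so the linearity of Remark~\ref{2.3b}~(ii) applies on each $[0,T]$ and hence globally.

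Next I would apply Proposition~\ref{2.3f} three times, to $\xi^{(k)}$, to $\xi^{(l)}$, and to $\xi^{(k)} + \xi^{(l)}$. This yields that $Y^k$, $Y^l$ and $Y^k + Y^l$ all lie in $QV(\Pi;\mathbb{R})$, with
\begin{equation*}
[Y^k,Y^k]_t = \sum_{i,j=1}^d \int_0^t \xi^{(k),i}_{s-}\xi^{(k),j}_{s-} d[X^i,X^j]_s, \qquad [Y^l,Y^l]_t = \sum_{i,j=1}^d \int_0^t \xi^{(l),i}_{s-}\xi^{(l),j}_{s-} d[X^i,X^j]_s,
\end{equation*}
and the analogous formula for $Y^k+Y^l$ with integrand $\bigl(\xi^{(k),i}_{s-}+\xi^{(l),i}_{s-}\bigr)\bigl(\xi^{(k),j}_{s-}+\xi^{(l),j}_{s-}\bigr)$. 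Since $Y^k + Y^l \in QV(\Pi;\mathbb{R})$ for every pair $k,l$, Definition~\ref{2.1b} immediately gives $Y = (Y^1,\dots,Y^m) \in QV(\Pi;\mathbb{R}^m)$.

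It then remains to compute the covariation. By definition $[Y^k,Y^l] = \frac{1}{2}\bigl([Y^k+Y^l,Y^k+Y^l] - [Y^k,Y^k] - [Y^l,Y^l]\bigr)$; expanding the product in the formula for $[Y^k+Y^l,Y^k+Y^l]$ and cancelling the two pure-square terms leaves
\begin{equation*}
[Y^k,Y^l]_t = \frac{1}{2} \sum_{i,j=1}^d \int_0^t \bigl(\xi^{(k),i}_{s-}\xi^{(l),j}_{s-} + \xi^{(l),i}_{s-}\xi^{(k),j}_{s-}\bigr) d[X^i,X^j]_s.
\end{equation*}
Finally I would invoke the symmetry $[X^i,X^j] = [X^j,X^i]$ of the covariation: relabelling $i \leftrightarrow j$ in the second summand turns it into the first, so the right-hand side equals $\sum_{i,j=1}^d \int_0^t \xi^{(k),i}_{s-}\xi^{(l),j}_{s-} d[X^i,X^j]_s$, which is \eqref{2.3gb}. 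The argument is essentially bookkeeping; the only steps needing a little attention are the appeal to the vector-space structure in Remark~\ref{2.3b}~(ii) (so that $\xi^{(k)}+\xi^{(l)}$ is admissible with integral $Y^k + Y^l$) and the symmetrization of the double sum using the symmetry of the covariation matrix. I do not expect a genuine obstacle here.
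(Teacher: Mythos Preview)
Your proposal is correct and follows essentially the same approach as the paper: the paper's proof consists of the single sentence ``Applying Proposition~\ref{2.3f} to $Y^{k}$ and $Y^{k} + Y^{l}$ for $k,l \in \{1, \dots, m \}$, we obtain \eqref{2.3gb},'' and you have simply spelled out the polarization argument that this sentence implies, including the appeal to Remark~\ref{2.3b}~(ii) for admissibility of the sum and the symmetrization step at the end.
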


\begin{proof}
Applying Proposition~\ref{2.3f} to $Y^{k}$ and $Y^{k} + Y^{l}$
for $k,l \in \{1, \dots m \}$, we obtain \eqref{2.3gb}.
\end{proof}

By Propositions \ref{2.3f} and \ref{2.3g}, we see that a path of the form
$Y_t = \int_0^t \langle \xi_{s-}, dX_s \rangle$ again has quadratic variation.
So we can consider It{\^o}-F{\"o}llmer integration by $Y$.
%This integration is expected to satisfy
%the associativity law, as usual It{\^o} integrals do.
The associativity of It{\^o}-F{\"o}llmer integration with respect to continuous integrators
are proved in \cite[Theorem 13]{Schied_2014}.
In this paper, we will extend this result to {\cadlag} integrators.
For this purpose, we prepare a lemma.
This lemma is due to
%Schied~
\cite[Proof of Theorem 13]{Schied_2014}
for the continuous path case.

\begin{Lem} \label{2.3h}
Let $X \in QV(\Pi;\mathbb{R}^d)$, let $\xi$ be an admissible integrand of $X$,
and let $Y_t =  \int_0^t \left\langle \xi_{s-},dX_s \right\rangle$.
Then, for each $T > 0$, we can choose a $C^{1,2}$ function $F$ such that
$\xi_t  = \nabla_x F(A_t,X_t)$, $Y_t = F(A_t,X_t)$, and
\begin{equation}  \label{2.3i}
\sum_{i=1}^m \int_0^t \frac{\partial F}{\partial a_i}(A_{s-},X_{s-})d(A^i)^{\mathrm{c}}_{s} + \frac{1}{2} \sum_{i,j = 1}^d \int_0^t \frac{\partial^2 F}{\partial x_i \partial x_j} (A_{s-},X_{s-}) d[X^i,X^j]^{\mathrm{c}}_s = 0
\end{equation}
holds for all $t \in [0,T]$.
\end{Lem}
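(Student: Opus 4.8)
The plan is to reduce the identity \eqref{2.3i} to the It\^o--F\"ollmer formula (Theorem~\ref{2.2c}), absorbing the ``drift'' it produces into one extra finite-variation coordinate. Fix $T>0$. By Definition~\ref{2.3a} there are $m_0\in\mathbb{N}$, an open set $U\subset\mathbb{R}^{m_0}\times\mathbb{R}^d$, a function $f\in C^{1,2}(U)$, and $A^{(0)}\in FV_{\mathrm{loc}}^{m_0}$ with $\xi_t=\nabla_x f(A^{(0)}_t,X_t)$ on $[0,T]$ and $(A^{(0)},X)([0,T])$ relatively compact in $U$. I would apply Theorem~\ref{2.2c} (in the localized form of Remark~\ref{2.2fa}) to $t\mapsto f(A^{(0)}_t,X_t)$ and note that the integral term appearing there is exactly $Y_t=\int_0^t\langle\xi_{s-},dX_s\rangle$, which gives
\begin{equation*}
Y_t = f(A^{(0)}_t,X_t) - f(A^{(0)}_0,X_0) - C_t - J_t , \qquad t\in[0,T],
\end{equation*}
where $C_t$ collects the two continuous drift integrals
\begin{align*}
C_t &= \sum_{k=1}^{m_0}\int_0^t \frac{\partial f}{\partial a_k}(A^{(0)}_{s-},X_{s-})\, d(A^{(0),k})^{\mathrm{c}}_s \\
&\quad + \frac12\sum_{k,l=1}^d\int_0^t \frac{\partial^2 f}{\partial x_k\partial x_l}(A^{(0)}_{s-},X_{s-})\, d[X^k,X^l]^{\mathrm{c}}_s
\end{align*}
and $J_t=\sum_{0<s\le t}\{\Delta f(A^{(0)}_s,X_s)-\sum_k\frac{\partial f}{\partial x_k}(A^{(0)}_{s-},X_{s-})\Delta X^k_s\}$ is the jump sum. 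Both $C$ and $J$ lie in $FV_{\mathrm{loc}}$, and the key structural remark is that $C$ is \emph{continuous} (being an indefinite integral of a locally bounded integrand against the atomless Stieltjes measures $d(A^{(0),k})^{\mathrm{c}}$ and $d[X^k,X^l]^{\mathrm{c}}$), while $J$ is \emph{purely discontinuous}.

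Next I would set $D:=C+J\in FV_{\mathrm{loc}}$, put $m:=m_0+1$ and $A:=(A^{(0),1},\dots,A^{(0),m_0},D)\in FV_{\mathrm{loc}}^m$, and define
\begin{equation*}
F(a_1,\dots,a_{m_0},a_m,x) := f(a_1,\dots,a_{m_0},x) - f(A^{(0)}_0,X_0) - a_m
\end{equation*}
on the open set $U\times\mathbb{R}$. Then $F\in C^{1,2}(U\times\mathbb{R})$, the image $(A,X)([0,T])$ is relatively compact in $U\times\mathbb{R}$ (since $D([0,T])$ is bounded, $D$ being \cadlag), and by construction $F(A_t,X_t)=Y_t$ and $\nabla_x F(A_t,X_t)=\nabla_x f(A^{(0)}_t,X_t)=\xi_t$ on $[0,T]$. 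It then remains to verify \eqref{2.3i} for this $F$ and $A$: since $\partial F/\partial a_k=\partial f/\partial a_k$ for $k\le m_0$, $\partial F/\partial a_m\equiv-1$, and $\partial^2 F/\partial x_i\partial x_j=\partial^2 f/\partial x_i\partial x_j$, the left-hand side of \eqref{2.3i} equals $C_t-\int_0^t dD^{\mathrm{c}}_s=C_t-D^{\mathrm{c}}_t$. Because $J$ is purely discontinuous and $C$ continuous, $D^{\mathrm{c}}=C^{\mathrm{c}}=C$, so the expression vanishes for every $t\in[0,T]$, which is \eqref{2.3i}.

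The argument is essentially bookkeeping, and I do not expect a genuine obstacle. The one point that needs care is the continuous / purely-discontinuous split of the right-hand side of the It\^o--F\"ollmer formula: recognizing that the drift $C$ is continuous, that the jump sum $J$ is purely discontinuous, and hence that $\int_0^t dD^{\mathrm{c}}_s=D^{\mathrm{c}}_t=C_t$ is precisely what makes the choice ``$F=f-\text{const}-a_m$'' cancel $C_t$ exactly. Note also that augmenting $A$ by the single finite-variation coordinate $D$ introduces no new terms in the Hessian sum of \eqref{2.3i}, since that sum runs over the $x$-variables only and $\partial^2 F/\partial x_i\partial x_j=\partial^2 f/\partial x_i\partial x_j$.
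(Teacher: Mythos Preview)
Your proof is correct and is essentially the paper's own argument: the extra finite-variation coordinate $D=C+J$ you introduce is exactly the paper's $\widetilde{A}^0_t=f(\widetilde{A}_t,X_t)-f(\widetilde{A}_0,X_0)-Y_t$, and your function $F(a,a_m,x)=f(a,x)-f(A^{(0)}_0,X_0)-a_m$ coincides with theirs. You are in fact more explicit than the paper in verifying \eqref{2.3i}, spelling out the continuous/purely-discontinuous split $D^{\mathrm{c}}=C$ where the paper merely says the identity holds ``by construction and It\^o's formula.''
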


\begin{proof}
Fix $T>0$ arbitrarily.
Take $\widetilde{m} \in \mathbb{N}$, $\widetilde{A} \in FV_{\mathrm{loc}}^{m}$, 
an open set $U \subset \mathbb{R}^{m} \times \mathbb{R}^d$, and 
$f \in C^{1,2}(U,\mathbb{R})$ such that 
$\xi_t = \nabla_x f(\widetilde{A}_t,X_t)$ for $t \in [0,T]$.
Let us define $\widetilde{A}_t^{0} = f(\widetilde{A}_t,X_t) -  f(\widetilde{A}_0,X_0) - Y_t$.
Then, the path $\widetilde{A}^{0}$ is of finite variation on $[0,T]$ by It{\^o}'s formula.
Moreover, let $m = \widetilde{m}+1$,
$A = (\widetilde{A}^0, \widetilde{A})$, and
\begin{equation*}
F(\widetilde{a}_0,\widetilde{a},x) = f(\widetilde{a},x) - f(A_0,X_0) - \widetilde{a}_0, \quad (\widetilde{a}_0,\widetilde{a},x) \in \mathbb{R} \times U.
\end{equation*}
Then, we have $F \in C^{1,2}(\mathbb{R} \times U)$, $Y_t = F(A_t,X_t)$, and
\begin{equation*}
\nabla_x F(A_t,X_t) = \nabla_x f(\widetilde{A}_t,X_t) = \xi_t, \quad t \in [0,T].
\end{equation*}
By construction and It{\^o}'s formula, $F$ clearly satisfies \eqref{2.3i}.
\end{proof}

The following is the main theorem of this subsection.

\begin{Th} \label{2.3m}
Let $X \in QV(\Pi;\mathbb{R}^d)$ and
$Y_t^k =  \int_0^t \left\langle \xi_{s-}^{(k)},dX_s \right\rangle$,
where $\xi^{(1)},\dots,\xi^{(\nu)}$ are admissible integrands of $X$.
Suppose that $\eta = (\eta^1,\dots,\eta^\nu)$
is an $\nu$-dimensional {\cadlag} path.
Then, the It{\^o}-F{\"o}llmer integral $\int_0^t \langle \eta_{s-}, dY_s \rangle$
exists if and only if the It{\^o}-F{\"o}llmer integral
$\int_0^t \langle \sum_{k=1}^\nu \eta^k_{s-} \xi_{s-}^{(k)}, dX_s \rangle$ exists.
If one of them exists, they satisfy
\begin{equation} \label{2.3n}
\int_0^t \left\langle \eta_{s-},d Y_s \right\rangle = \int_0^t \left\langle \sum_{k=1}^\nu \eta^k_{s-} \xi_{s-}^{(k)}, dX_s \right\rangle, \quad t \in \mathbb{R}_{\geq 0}.
\end{equation}
\end{Th}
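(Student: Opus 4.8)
The plan is to follow the template of the proofs of Proposition~\ref{2.1m} and Theorem~\ref{2.2c}, reducing \eqref{2.3n} to the assertion that a sequence of Riemann-sum discrepancies tends to $0$. Fix $T>0$; it suffices to treat $t\in[0,T]$. For each $k\in\{1,\dots,\nu\}$ I apply Lemma~\ref{2.3h} to the admissible integrand $\xi^{(k)}$ to obtain $m_k\in\mathbb{N}$, $A^{(k)}\in FV_{\mathrm{loc}}^{m_k}$ and a $C^{1,2}$ function $F^{(k)}$ with $\xi^{(k)}_t=\nabla_x F^{(k)}(A^{(k)}_t,X_t)$ and $Y^k_t=F^{(k)}(A^{(k)}_t,X_t)$ on $[0,T]$, satisfying the drift-free identity~\eqref{2.3i}; as in those proofs we may assume $F^{(k)}$ has compact support. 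Feeding $F^{(k)}$ into Theorem~\ref{2.2c} and using \eqref{2.3i} to cancel the $d(A^{(k)})^{\mathrm c}$- and $d[X^i,X^j]^{\mathrm c}$-integrals, It{\^o}'s formula reduces to $\sum_{0<s\le t}\{\Delta F^{(k)}(A^{(k)}_s,X_s)-\langle\nabla_x F^{(k)}(A^{(k)}_{s-},X_{s-}),\Delta X_s\rangle\}=0$ for $t\le T$, hence, comparing jumps, to the identity $\Delta Y^k_s=\langle\xi^{(k)}_{s-},\Delta X_s\rangle$, which I record for later use.

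Next I fix $t\in[0,T]$ and set $S_n:=\sum_{t_i\in\pi_n\cap[0,t]}\langle\eta_{t_i},\delta^n_i Y\rangle$ and $\widehat S_n:=\sum_{t_i\in\pi_n\cap[0,t]}\langle\sum_{k=1}^\nu\eta^k_{t_i}\xi^{(k)}_{t_i},\delta^n_i X\rangle$. Since $\eta$ and $\sum_k\eta^k\xi^{(k)}$ are {\cadlag}, Proposition~\ref{2.2b} shows that the two It{\^o}-F{\"o}llmer integrals in the statement are exactly $\lim_n S_n$ and $\lim_n\widehat S_n$ whenever these exist, so it suffices to prove $S_n-\widehat S_n\to0$. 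Expanding $\delta^n_i Y^k=\delta^n_i F^{(k)}(A^{(k)},X)$ by second-order Taylor's theorem as in \eqref{2.2e}, and using $\nabla_x F^{(k)}(A^{(k)}_{t_i},X_{t_i})=\xi^{(k)}_{t_i}$, I obtain
\begin{equation*}
S_n-\widehat S_n=\sum_{k=1}^\nu\sum_{t_i\in\pi_n\cap[0,t]}\eta^k_{t_i}\,E^{n,k}_i,\qquad E^{n,k}_i:=\delta^n_i Y^k-\langle\xi^{(k)}_{t_i},\delta^n_i X\rangle,
\end{equation*}
where $E^{n,k}_i$ is the sum of the four non-$dX$ Taylor terms $\langle\nabla_a F^{(k)}_i,\delta^n_i A^{(k)}\rangle+\langle r^{n,k}_i,\delta^n_i A^{(k)}\rangle+\tfrac12\langle\nabla_x^2 F^{(k)}_i\,\delta^n_i X,\delta^n_i X\rangle+\langle R^{n,k}_i\,\delta^n_i X,\delta^n_i X\rangle$ (derivatives at $(A^{(k)}_{t_i},X_{t_i})$, and $r^{n,k}_i$, $R^{n,k}_i$ the usual remainders).

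I then split $\sum_{t_i\in\pi_n\cap[0,t]}=\sum_{(1,n,p)}+\sum_{(2,n,p)}$ with the $D_p(t)$-notation of the proof of Proposition~\ref{2.1m}. On a big-jump interval $]t_i,t_{i+1}]$ meeting $D_p(t)$ at an isolated time $\tau$ (true once $\lvert\pi_n\rvert$ is small), one-sided continuity gives $\delta^n_i Y^k\to\Delta Y^k_\tau$ and $\langle\xi^{(k)}_{t_i},\delta^n_i X\rangle\to\langle\xi^{(k)}_{\tau-},\Delta X_\tau\rangle$, so $E^{n,k}_i\to0$ by the jump identity above, and since $\sum_{(1,n,p)}$ is a finite sum for $n$ large, $\sum_{(1,n,p)}\eta^k_{t_i}E^{n,k}_i\to0$. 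For $\sum_{(2,n,p)}$ I treat the four Taylor terms separately: $\sum\eta^k_{t_i}\langle\nabla_a F^{(k)}_i,\delta^n_i A^{(k)}\rangle$ converges by dominated convergence for left-endpoint Stieltjes sums, and $\tfrac12\sum\eta^k_{t_i}\langle\nabla_x^2 F^{(k)}_i\,\delta^n_i X,\delta^n_i X\rangle$ converges by Lemma~\ref{2.1g} applied with the auxiliary {\cadlag} path $(\eta^k,A^{(k)})$; their limits, combined via \eqref{2.3i} (which annihilates the continuous parts), equal $\sum_{0<s\le t}\eta^k_{s-}\{\langle\nabla_a F^{(k)}(A^{(k)}_{s-},X_{s-}),\Delta A^{(k)}_s\rangle+\tfrac12\sum_{a,b}\frac{\partial^2 F^{(k)}}{\partial x_a\partial x_b}(A^{(k)}_{s-},X_{s-})\Delta X^a_s\Delta X^b_s\}$, from which the $\sum_{(1,n,p)}$-contributions (converging to the same series over $\tau\in D_p(t)$) are subtracted, leaving the tail of an absolutely convergent series that vanishes as $p\to\infty$. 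The remainders are bounded as in the proof of Theorem~\ref{2.2c}: $\varlimsup_n\lvert\sum_{(2,n,p)}\eta^k_{t_i}\langle r^{n,k}_i,\delta^n_i A^{(k)}\rangle\rvert\le\sup_{[0,t]}\lvert\eta^k\rvert\,\omega(\nabla_a F^{(k)};2/p)\sum_i V(A^{(k),i})_t$, and likewise for the $R^{n,k}_i$-term with $\omega(\nabla_x^2 F^{(k)};2/p)\sum_a[X^a,X^a]_t$, both $\to0$ as $p\to\infty$. Summing over $k$ and sending $p\to\infty$ yields $\varlimsup_n\lvert S_n-\widehat S_n\rvert=0$, which establishes the equivalence and \eqref{2.3n} on $[0,T]$, hence everywhere.

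I expect the main obstacle to be this last step, $S_n-\widehat S_n\to0$: because the weight $\eta^k$ is merely {\cadlag} and need not have finite variation, there is no summation-by-parts shortcut, so one is forced to run the $D_p(t)$-splitting with the extra factors $\eta^k_{t_i}$ carried through. The two points that make it work are that the per-interval discrepancy $E^{n,k}_i$ vanishes in the limit on the big-jump intervals — there both $\delta^n_i Y^k$ and the Riemann increment $\langle\xi^{(k)}_{t_i},\delta^n_i X\rangle$ converge to $\langle\xi^{(k)}_{\tau-},\Delta X_\tau\rangle$ — and that the drift-free normalization supplied by Lemma~\ref{2.3h} is precisely what cancels the continuous ``drift'' parts of the surviving Taylor terms.
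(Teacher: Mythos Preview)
Your proposal is correct and follows essentially the same route as the paper's proof: represent each $Y^k$ via Lemma~\ref{2.3h}, Taylor-expand the discrepancy $S_n-\widehat S_n$, split into big-jump and small-jump intervals, handle the $\nabla_a F^{(k)}$ and $\nabla_x^2 F^{(k)}$ terms by dominated convergence and Lemma~\ref{2.1g} (computing each as a full sum minus its $\sum_{(1,n,p)}$ piece), bound the residuals by $\omega(\cdot;2/p)$, and invoke \eqref{2.3i}---interpreted as vanishing of a signed measure, so that integrating the extra factor $\eta^k_{s-}$ against it still gives zero---to kill the continuous parts. Two cosmetic points: the paper carries a separate splitting set $D^{l}_p(t)$ for each $(A^{(l)},X)$ rather than a single $D_p(t)$, and your exposition of the $\sum_{(2,n,p)}$ analysis (``$\ldots$from which the $\sum_{(1,n,p)}$-contributions are subtracted'') would read more clearly if you stated up front that each of those two Taylor terms is written as $\text{(full sum)}-\text{(big-jump sum)}$ before passing to the limit.
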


\begin{proof}
Fix $T > 0$.
For each $l \in \{ 1,\dots, \nu\}$,
let $Y^{l} = F^{l}(A^{l},X)$ be the expression of Lemma~\ref{2.3h}
where $A^{l} \in FV_{\mathrm{loc}}^{m_l}$.
By the second-order Taylor expansion, we have
\begin{align*}
\delta^n_i F^{l}(A^{(l)},X)
& = \left\langle \nabla_{a} F^{l}(A^{(l)}_{t^n_i},X_{t^n_i}), \delta^n_i A^{(l)} \right\rangle + \left\langle r^{n,l}_i,\delta^n_i A^{(l)} \right\rangle + \left\langle \nabla_x F^{l}(A^{(l)}_{t^n_i},X_{t^n_i}), \delta^n_i X \right\rangle  \notag \\
& \quad + \frac{1}{2} \left\langle \nabla_x^2 F^{l}(A^{(l)}_{t^n_i},X_{t^n_i})  (\delta^n_i X), \delta^n_i X \right\rangle + \left\langle R^{n,l}_i (\delta^n_i X), \delta^n_i X \right\rangle,
\end{align*}
where $r^{n,l}_i$ and $R^{n,l}_i$ are corresponding residual terms.
(See the proof of Theorem~\ref{2.2c} for the specific forms.)
Let $D^{l}(t)$, $D^{l}_p(t)$, $\sum_{(1,n,p)_l}$, and $\sum_{(2,n,p)_l}$ be
the quantities in the proof of Proposition~\ref{2.1m} corresponding to the path $(A^{(l)},X)$.
Then,
\begin{align*}
& \sum_{\pi_n \cap [0,t]} \langle \eta_{t^n_i}, \delta^n_i Y \rangle - \sum_{\pi_n \cap [0,t]}\left\langle  \sum_{l=1}^{\nu} \eta^{l}_{t_i} \xi^{(l)}_{t_i}, \delta^n_i X \right\rangle  \notag \\
& = \sum_{l=1}^{\nu} \sum_{(1,n,p)_l}  \eta^{l}_{t^n_i} \delta^n_i F^{l}( A^{(l)},X ) -  \sum_{l=1}^{\nu} \sum_{(1,n,p)_l} \eta^l_{t_i} \left\langle \nabla_x F^{l} (A^{(l)}_{t^n_i},X_{t^n_i} ), \delta^n_i X \right\rangle  \notag \\ 
& \quad + \sum_{l=1}^{\nu} \sum_{\pi_n \cap [0,t]} \eta^{l}_{t_i}  \left\langle \nabla_{a} F^{l}(A^{(l)}_{t^n_i},X_{t^n_i}) , \delta^n_i A^{(l)} \right\rangle  - \sum_{l=1}^{\nu} \sum_{(1,n,p)_l} \eta^{l}_{t^n_i}  \left\langle \nabla_{a} F^{l}(A^{(l)}_{t^n_i},X_{t^n_i}) , \delta^n_i A^{(l)} \right\rangle    \notag \\
& \quad + \frac{1}{2} \sum_{l=1}^{\nu} \sum_{\pi_n \cap [0,t]} \left\langle \eta^l_{t^n_i} \nabla_x^2 F^{l}(A^{(l)}_{t^n_i},X_{t^n_i}) ( \delta^n_i X), \delta^n_i X \right\rangle   \notag \\
& \quad - \frac{1}{2} \sum_{l=1}^{\nu} \sum_{(1,n,p)_l} \left\langle \eta^{l}_{t^n_i} \nabla_x^2 F^{l}(A^{(l)}_{t^n_i},X_{t^n_i}) (\delta^n_i X), \delta^n_i X \right\rangle  \notag \\
& \quad + \sum_{l=1}^{\nu}  \sum_{(2,n,p)_l} \eta^l_{t^n_i} \left\langle r^{n,l}_i, \delta^n_i A^{(l)} \right\rangle + \sum_{l=1}^{\nu} \sum_{(2,n,p)_l} \eta_{t^n_i}^l \left\langle R^n_i ( \delta^n_i X), \delta^n_i X \right\rangle   \notag \\
& =: I_1^{(n)} - \widetilde{I}_1^{(n)} + I_2^{(n)} - \widetilde{I}_{2}^{(n)} + \frac{1}{2} I_3^{(n)} - \frac{1}{2} \widetilde{I}_{3}^{(n)} + I_4^{(n)} + I_{5}^{(n)}.
\end{align*}

Clearly we have
\begin{equation*} 
\lim_{n \to \infty} \left( I_1^{(n)} - \widetilde{I}_1^{(n)}  \right) = \sum_{l=1}^{\nu} \sum_{s \in D_1^{l}(p)}  \eta^{l}_{s-} \left( \Delta Y^{l}_{s} - \sum_{k=1}^d \xi_{s-}^{(l),k} \Delta X^k_{s} \right) = 0.
\end{equation*}
By the dominated convergence theorem, we see that
\begin{align*}
\lim_{n \to \infty} I_2^{(n)}
& = \sum_{l=1}^{\nu} \sum_{j=1}^{m_l} \int_{0}^{t} \eta^{l}_{s-} \frac{\partial F^{l}}{\partial a_j} (A^{(l)}_{s-},X_{s-}) dA^{(l),j}_s   \notag \\
\lim_{n \to \infty} \widetilde{I}_2^{(n)}
& = \sum_{l=1}^{\nu} \sum_{j=1}^{m_l} \sum_{s \in D_p^{l}(p)} \eta^{l}_{s-} \frac{\partial F^l}{\partial a_j}(A^{(l)}_{s-},X_{s-}) \Delta A^{(l)}_{s}.
\end{align*}
By Lemma~\ref{2.1g}, we have
\begin{align*} 
\lim_{n \to \infty} I_3^{(n)} 
& = \sum_{l=1}^{\nu} \sum_{j,k = 1}^{d} \int_{0}^{t} \eta^{l}_{s-} \frac{\partial^2 F^{l}}{\partial x_j \partial x_k} (A^{(l)}_{s-},X_{s-}) d[X^j,X^k]_s   \notag \\
\lim_{n \to \infty}  \widetilde{I}_3^{(n)} & =  \sum_{l=1}^{\nu} \sum_{j,k = 1}^{d} \sum_{s \in D^l_p(t)} \eta^{l}_{s-} \frac{\partial^2 F^{l}}{\partial x_j \partial x_k} (A^{(l)}_{s-},X_{s-}) \Delta X^j_s \Delta X^k_s.
\end{align*}
By a discussion similar to that of the proof of Theorem~\ref{2.2c},
we can take a constant $C$, which does not depend on $\varepsilon$ and $n$, such that
\begin{equation*}
\varlimsup_{n \to \infty} \left\lvert I_4^{(n)} \right\rvert \leq  C \sum_{l=1}^{\nu}\omega\left(\nabla_a F^l; \frac{2}{p} \right), \quad 
\varlimsup_{n \to \infty} \left\lvert I_5^{(n)} \right\rvert \leq  C \sum_{l=1}^{\nu} \omega\left( \nabla^2_x F^l ; \frac{2}{p} \right) .
\end{equation*}
As a consequence of these observations and \eqref{2.3i} we obtain
\begin{align*}
& \varlimsup_{n \to \infty} \left\lvert \sum_{\pi_n \cap [0,t]} \langle \eta_{t^n_i},\delta^n_i Y \rangle - \sum_{\pi_n \cap [0,t]}\left\langle  \sum_{l=1}^{\nu} \eta^{l}_{t^n_i} \xi^{(l)}_{t^n_i}, \delta^n_i X \right\rangle \right\rvert \notag \\
& \leq C \sum_{l=1}^{\nu} \left( \omega \left( \nabla_a F^{l}; \frac{2}{p}\right) + \omega \left( \nabla^2_x F^{l} ; \frac{2}{p}\right) \right) + \left\lvert \sum_{l=1}^{\nu} \sum_{s \in D^{l}(t) \setminus D_p^{l}(t)} \sum_{j=1}^{m_l}  \eta^{l}_{s-} \frac{\partial F^l}{\partial a^j}(A^{(l)}_{s-},X_{s-}) \Delta A^{(l),j}_{s} \right\rvert \notag \\
& \quad + \frac{1}{2} \left\lvert \sum_{l=1}^{\nu} \sum_{s \in D^{l}(t) \setminus D_p^{l}(t)} \sum_{j,k = 1}^{d} \eta^{l}_{s-} \frac{\partial^2 F^{l}}{\partial x_j \partial x_k} (A^{(l)}_{s-},X_{s-}) \Delta X^j_s \Delta X^k_s  \right\rvert.
\end{align*}
Letting $p \to \infty$, we see that the right-hand side of this inequality converges to 0.
Hence, if one of the two It{\^o}-F{\"o}llmer integrals exists, then the other also exists and their values are equal.
\end{proof}

\begin{Rem} \label{2.3nf}
If $\eta = (\eta^1,\dots,\eta^{\nu})$ is an admissible integrand of $Y$,
associativity in the sense of \eqref{2.3n} can be shown by direct calculation.

Fix $T>0$ and let $\xi_t^{(l)} = \nabla_x F^{l}(A^{(l)}_{t},X_t)$ be
those in the proof of Theorem~\ref{2.3m}.
Define $m = m_1 + \dotsb + m_\nu$, 
$a = (a^{(1)}, \dots, a^{(\nu)}) \in \mathbb{R}^{m}$,
$\mathbf{F}(a,x) = (F^1(a^{(1)},x),\dots,F^{\nu}(a^{(\nu)},x))$,
and $A_t = (A^{(1)}_t,\dots,A^{(\nu)}_t)$.
Then we have $Y_t = \mathbf{F}(A_t,X_t)$.

Choose $B \in FV_{\mathrm{loc}}^{n}$ and $g$ of $C^{1,2}$-class such that
$\eta_t = \nabla_y g(B_t,Y_t)$ holds for all $t \in [0,T]$.
Let $C = (B,A)$ and $h(c,x) = h(b,a,x) = g(b,\mathbf{F}(a,x))$.
Then, we have
\begin{equation}
\nabla_x h (C_t,X_t) = \nabla_y g(B_t,Y_t) \nabla_x \mathbf{F}(A_t,X_t) = \sum_{l=1}^\nu \eta^l_t \xi^{(l)}_t .
\end{equation}
Hence, $\sum_{l=1}^\nu \eta^l_t \xi^{(l)}_t$ is an admissible integrand of $X$.
This expression is given by
%Schied~
\cite{Schied_2014} in the proof of Theorem 13.

Applying the It{\^o} formula to $h(C_t,X_t)$, we get
\begin{align} \label{2.3q}
\int_0^t \left\langle \sum_{l=1}^\nu \eta^l_{s-} \xi^{(l)}_{s-},dX_s \right\rangle 
& = h (C_t,X_t) - h (C_0,X_0) - \sum_{i=1}^{n+m} \int_{]0,t]} \frac{\partial h}{\partial c_i}(C_{s-},X_{s-}) d(C^i)^{\mathrm{c}}_s  \notag \\
& \quad - \frac{1}{2} \sum_{i,j=1}^d \int_{0}^{t} \frac{\partial^2 h}{\partial x_i \partial x_j}(C_{s-},X_{s-}) d\left[ X^i,X^j \right]^{\mathrm{c}}_s  \notag \\
& \quad - \sum_{0 < s \leq t} \left\{ \Delta h (C_s,X_s) - \sum_{i=1}^d \frac{\partial h}{\partial x_i}  (B_{s-},X_{s-}) \Delta X^i_s \right\}.
\end{align}
On the other hand, applying the It{\^o} formula to $g(B,Y)$, we see that
\begin{align} \label{2.3r}
\int_0^t \left\langle \eta_{s-},dY_s \right\rangle
%& = \int_0^t \left\langle \nabla_y g(B_{s-},Y_{s-}),dY_s \right\rangle  \notag \\
& = g(B_t,Y_t) - g(B_0,Y_0) - \sum_{k=1}^{m} \int_{0}^{t} \frac{\partial g}{\partial b_k}(B_{s-},Y_{s-}) d(B^k)^{\mathrm{c}}_s  \notag \\
& \quad - \frac{1}{2} \sum_{k,l=1}^\nu \int_{0}^{t} \frac{\partial^2 g}{\partial y_k \partial y_l}(B_{s-},Y_{s-}) d\left[ Y^k,Y^l \right]^{\mathrm{c}}_s  \notag \\
& \quad - \sum_{0 < s \leq t} \left\{ \Delta g(B_s,Y_s) - \sum_{l=1}^\nu \frac{\partial g}{\partial y_l}  (B_{s-},Y_{s-}) \Delta Y^l_s \right\} .
\end{align}
Thanks to Proposition~\ref{2.3g} and \eqref{2.3i}, we can check that the right sides of \eqref{2.3q} and \eqref{2.3r} are equal.
\end{Rem}

The following integration by parts formula is proved as a corollary of associativity.

\begin{Cor} \label{2.3s}
Let $X \in QV(\Pi)$.
For admissible integrands $\xi,\eta$ of $X$ and for $A,B \in FV_{\mathrm{loc}}$, we define
\begin{equation*}
Y_t = \int_0^t \xi_{s-} dX_s +A_t , \quad Z_t = \int_0^t \eta_{s-} dX_s + B_t.
\end{equation*}
Then we have
\begin{align*}
Y_t Z_t = \int_0^t Y_{s-} dZ_s + \int_0^t Z_{s-} dY_s + [Y,Z]_t .
\end{align*}
\end{Cor}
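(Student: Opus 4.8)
The plan is to reduce the statement to the integration by parts formula of Corollary~\ref{2.2h}. That corollary applies to any pair in $QV(\Pi;\mathbb{R}^2)$ for which one of the two It\^o--F\"ollmer integrals exists, and then yields an identity of exactly the required form. So the work is twofold: (a) check that $(Y,Z)\in QV(\Pi;\mathbb{R}^2)$, so that $[Y,Z]$ is defined; and (b) verify that, say, $\int_0^t Y_{s-}\,dZ_s$ exists.

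Write $Y' = \int_0^{\cdot}\xi_{s-}\,dX_s$ and $Z' = \int_0^{\cdot}\eta_{s-}\,dX_s$, so that $Y = Y' + A$ and $Z = Z' + B$. Part (a) is routine: by Proposition~\ref{2.3f} we have $Y',Z'\in QV(\Pi)$, and since $\xi+\eta$ is again an admissible integrand of $X$ with $Y'+Z' = \int_0^{\cdot}(\xi+\eta)_{s-}\,dX_s$ (Remark~\ref{2.3b}(ii)), also $Y'+Z'\in QV(\Pi)$; hence $(Y',Z')\in QV(\Pi;\mathbb{R}^2)$. Adding the finite-variation paths $A$ and $B$ and applying Corollary~\ref{2.1z}(iii) twice (once in each coordinate) gives $(Y,Z)=(Y'+A,Z'+B)\in QV(\Pi;\mathbb{R}^2)$; in particular $[Y,Z]$ is well defined.

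For part (b) the crucial point is that the path $s\mapsto Y_s\eta_s$ is an admissible integrand of $X$. Indeed, fix $T>0$. By Lemma~\ref{2.3h} we may write $Y'_t = F(\widetilde{A}_t,X_t)$ on $[0,T]$ with $F\in C^{1,2}$ and $\widetilde{A}\in FV_{\mathrm{loc}}^{\widetilde{m}}$, and by Definition~\ref{2.3a} the integrand $\eta$ has a representation $\eta_t = \tfrac{\partial g}{\partial x}(\widetilde{B}_t,X_t)$ on $[0,T]$ with $g\in C^{1,2}$ and $\widetilde{B}\in FV_{\mathrm{loc}}^{\widetilde{n}}$. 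Multiplying $F$ and $g$ by cut-off functions so that they become compactly supported $C^{1,2}$ functions on the full Euclidean spaces (legitimate because $(\widetilde{A},X)$ and $(\widetilde{B},X)$ stay in compacta on $[0,T]$), we obtain on $[0,T]$
\begin{equation*}
Y_t\eta_t = \big(F(\widetilde{A}_t,X_t)+A_t\big)\,\tfrac{\partial g}{\partial x}(\widetilde{B}_t,X_t) = G\big(\widetilde{A}_t,A_t,\widetilde{B}_t,X_t\big)
\end{equation*}
for a function $G$ of joint $C^1$-class, so $Y\eta$ is an admissible integrand of $X$ by Remark~\ref{2.3b}(iii); in particular $\int_0^t Y_{s-}\eta_{s-}\,dX_s$ exists by Theorem~\ref{2.2c}. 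Now split the non-anticipative Riemann sums approximating $\int_0^t Y_{s-}\,dZ_s$ according to $Z = Z'+B$: the $Z'$-part is $\sum_{t_i} Y_{t_i}\,\delta^n_i Z'$, which converges by associativity (Theorem~\ref{2.3m} with $\nu=1$, $\xi^{(1)}=\eta$ and outer path $Y$), whose sole hypothesis — existence of $\int_0^t Y_{s-}\eta_{s-}\,dX_s$ — we have just secured; the $B$-part is $\sum_{t_i} Y_{t_i}\,\delta^n_i B$, which converges to the Lebesgue--Stieltjes integral $\int_0^t Y_{s-}\,dB_s$ (Remark~\ref{2.2aa}). Hence $\int_0^t Y_{s-}\,dZ_s$ exists, and Corollary~\ref{2.2h} now gives the claimed identity.

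The main obstacle is part (b), and within it the assertion that $Y\eta$ is an admissible integrand of $X$: this is what makes Theorem~\ref{2.3m} applicable, once the finite-variation part $B$ of $Z$ has been peeled off at the level of Riemann sums. Establishing it relies on the structural description of $Y'$ as a $C^{1,2}$-function of a finite-variation path and $X$ provided by Lemma~\ref{2.3h}, together with the routine but slightly delicate localization that converts functions defined on open sets into globally defined ones so that Remark~\ref{2.3b}(iii) applies verbatim.
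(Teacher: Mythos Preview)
Your proof is correct and follows essentially the same strategy as the paper: reduce to Corollary~\ref{2.2h}, then establish the existence of $\int_0^t Y_{s-}\,dZ_s$ by splitting off the finite-variation part $B$ and using associativity (Theorem~\ref{2.3m}) together with the admissibility of $Y\eta$, which is obtained from the representation of $Y'$ furnished by Lemma~\ref{2.3h}. You are in fact more careful than the paper in explicitly verifying $(Y,Z)\in QV(\Pi;\mathbb{R}^2)$, which is needed to invoke Corollary~\ref{2.2h}.
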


\begin{proof}
According to Corollary~\ref{2.2h},
what we need to show is the existence of the It{\^o}-F{\"o}llmer integral
$\int_0^t Y_{s-} dZ_s$.
Theorem~\ref{2.3m} yields that this condition is equivalent to the existence of
the following two It{\^o}-F{\"o}llmer integrals.
$$ \int_0^t Y_{s-} \eta_{s-} dX_s, \quad \int_0^t Y_{s-} dB_s. $$
We can see that the first integral exists
because $\xi_t Y_t$ is an admissible integrand of $X$,
which is shown by using the expression for $Y$ in Lemma~\ref{2.3h}.
The existence of the second integral is obvious since $B \in FV_{\mathrm{loc}}$.
\end{proof}

\subsection{Quadratic variations of paths and semimartingales}

In this subsection,
we describe relations between quadratic variations in F{\"o}llmer's sense and those of semimartingales.
Let $(\Omega,\mathcal{F},(\mathcal{F}_t)_{t \in \mathbb{R}_{\geq 0}})$ be a filtered measurable space
such that $(\mathcal{F}_t)$ is right continuous and universally complete,
and let $\mathcal{P}$ be a family of probability measures on $(\Omega,\mathcal{F})$.

\begin{Def} \label{5.1a}
Let $\pi = (\tau_k)_{k \in \mathbb{N}}$ be a sequence of random variables
such that $0 = \tau_0 \leq \tau_1 \leq \dots$.
If for every $\omega$ the partition $(\tau_k(\omega))_{k \in \mathbb{N}}$
satisfies the conditions in Section 2.1, 
$\pi$ is called a \emph{random partition}.
In addition, if each $\tau_k$ is an $(\mathcal{F}_t)$-stopping time,
we call $\pi$ an \emph{optional partition}.
We denote the sequence $(\tau_k(\omega))_{k \in \mathbb{N}}$ by $\pi(\omega)$ for each $\omega \in \Omega$.
\end{Def}

For a sequence of optional partitions $\Pi = (\pi_n)_{n \in \mathbb{N}}$,
each partition is denoted by $\pi_n = (\tau^n_k)_{k \in \mathbb{N}}$, for example.
Given an optional partition $\pi$ and a {\cadlag} process $X$,
the oscillation of paths of $X$ along $\pi$ is defined by 
\begin{equation*}
O_t(X(\omega),\pi(\omega)) = \sup_{t_i \in \pi} \sup\{ \lVert X_s(\omega) - X_u(\omega) \rVert_{\mathbb{R}^d} \mid s,u \in [t_i,t_{i+1}\mathclose{[} \cap [0,t] \}.
\end{equation*}
Then, we can easily check that $(\omega,t) \mapsto O_t(X(\omega),\pi(\omega))$ is a {\cadlag} adapted process
because $X$ is {\cadlag} and adapted, and $\tau^n_k$'s are stopping times.
Also, recall that if a sequence $(g_n)$ in $L^p(P)$ satisfies
$\sum_{n \in \mathbb{N}} \lVert g_n - g \rVert_{L^p(P)}^p < +\infty$, 
then $(g_n)$ converges almost surely to $g$.

Suppose that $X$ is a semimartingale under $P \in \mathcal{P}$.
Let us consider the usual quadratic variation $[X,X]^P$ of a semimartingale $X$.
It is a well known fact that if a sequence of optional partitions
$\Pi = (\pi_n)$ satisfies $\lvert \pi_n(\omega) \rvert \to 0$ almost surely,
the sequence $[X(\omega),X(\omega)]^{\pi_{n}(\omega)}$ converges to $[X,X]^P$ in the ucp topology.
Hence, by taking a proper subsequence, $P$-almost all paths of $X$ have quadratic variation along
$\pi(\omega)$ and they are almost surely equal to $[X,X]^P$.

Our aim is to find a sufficient condition for $\Pi$ under which 
$P$-almost every path of $X$ has quadratic variation along $\Pi$ without taking a subsequence.
For this we introduce some spaces of processes and the norms on them.
Let $p \in [1,\infty[$.
A {\cadlag} adapted process $X$ belongs to $\mathcal{S}^p(P)$ if it satisfies
$\lVert X \rVert_{\mathcal{S}^p(P)} := \lVert \sup_{t \geq 0} \lvert X_t \rvert \rVert_{L^p(P)} < \infty$.
Then the space $(\mathcal{S}^p(P),\lVert ~~ \rVert_{\mathcal{S}^p(P)})$ is a Banach space.
Let $\mathcal{V}$ be the space of {\cadlag} adapted processes of locally finite variation
with initial value 0,
and let $\mathcal{M}_{\mathrm{loc},0}(P)$ be the space of {\cadlag} local martingales starting from 0.
The $\mathcal{H}^p$-norm of a semimartingale $X$ with respect to $P$ is
\begin{gather*}
\lVert X \rVert_{\mathcal{H}^p(P)} = \lVert X_0 \rVert_{L^p(P)} + \inf_{\substack{X=X_0 + M + A \\ M \in \mathcal{M}_{\mathrm{loc},0}, A \in \mathcal{V}}} j_p(M,A),    \\
j_p(M,A) =  \left\lVert \lbrack M,M \rbrack_{\infty}^{1/2} \right\rVert_{L^p(P)} + \left\lVert V(A)_{\infty} \right\rVert_{L^p(P)},
\end{gather*}
where $V(A)$ is the total variation process of $A$.
The space consisting of semimartingales with finite $\mathcal{H}^p$ norm is denoted by $\mathcal{H}^p(P)$.
Then the space $(\mathcal{H}^p(P),\lVert~~ \rVert_{\mathcal{H}^p(P)})$ is Banach.
Basic properties of these spaces are explained in, for example,
%Emery~
\cite{Emery_1978,Emery_1979} and
%Protter~
\cite[Chapter V]{Protter_2005}.
$\mathcal{S}^p$ and $\mathcal{H}^p$ norms have the following relation: there exists a positive constant $C_p$,
that only depends on $p$, such that 
\begin{equation} \label{5.1c}
\left\lVert X \right\rVert_{\mathcal{S}^p(P)} \leq C_p \lVert X \rVert_{\mathcal{H}^p(P)}
\end{equation}
holds for all semimartingale $X$ of $\mathcal{H}^p(P)$.

Given a class of stochastic process, denoted by $\mathcal{C}$, we say that a {\cadlag}
process $X$ prelocally belongs to $\mathcal{C}$
if there is a localizing sequence of stopping times $T_n$ such that
\begin{equation*}
X^{T_n-} := X 1_{\lBrack 0,T_n \lBrack} + (X_{T_n -}) 1_{\lBrack T_n,\infty \lBrack} \in \mathcal{C}
\end{equation*}
for all $n$. 
Here, a stochastic interval of the type $\lBrack S,T \lBrack$ is defined by $\lBrack S,T \mathclose{\lBrack} = \{ (\omega,t) \in \Omega \times \mathbb{R}_{\geq 0} \mid S(\omega) \leq t < T(\omega) \}$.
Recall that for any semimartingale $X$, the process $X-X_0$ prelocally belongs to
$\mathcal{H}^p(P)$ for all $p \in [1,\infty[$.
The following Proposition is due to
%Davis, Ob{\l}{\'o}j, and Siorpaes~
\cite[Proposition 2.4]{Davis_Obloj_Siorpaes_2017} for continuous semimartingales.
Its proof can directly applied to {\cadlag} semimartingales.

\begin{Prop} \label{5.1f}
Let $P \in \mathcal{P}$,
let $X$ be a $P$-semimartingale, 
and let $\Pi = (\pi_n)$ be a sequence of optional partitions
such that $\lvert \pi_n(\omega) \rvert \to 0$ $P$-a.s..
Suppose that for all $T$ the condition $\sum_{n} O_T(X(\omega),\pi_n(\omega)) < +\infty$ holds
$P$-almost surely.
Then, for $P$-almost every $\omega$, the path $X(\omega)$ belongs to $QV(\Pi(\omega))$ and satisfies
$\lbrack X(\omega),X(\omega) \rbrack^{\Pi(\omega)} = \lbrack X,X \rbrack^P(\omega)$.
\end{Prop}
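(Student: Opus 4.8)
The plan is to follow the continuous‑path argument behind \cite[Proposition 2.4]{Davis_Obloj_Siorpaes_2017}: localize so that $X-X_0\in\mathcal{H}^2(P)$, rewrite $[X,X]^{\pi_n}-[X,X]^P$ as an It\^o integral whose integrand is dominated pathwise by the oscillation $O_T(X,\pi_n)$, and estimate that integral in $\mathcal{S}^2(P)$ sharply enough that the summability hypothesis forces $P$‑almost sure convergence of the \emph{whole} sequence (not merely of a subsequence), through the criterion recalled just before the statement: if $\sum_n\lVert g_n\rVert_{L^2(P)}^2<\infty$ then $g_n\to 0$ $P$‑a.s.

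By Definition~\ref{2.1b} it suffices to treat a scalar semimartingale $X$ (apply the result to $X^i$ and to $X^i+X^j$), and by a countable intersection over $T\in\mathbb{N}$ it suffices to fix $T$ and show $\sup_{t\le T}\lvert[X,X]^{\pi_n}_t-[X,X]^P_t\rvert\to 0$ $P$‑a.s.; the limit $[X,X]^P$ is {\cadlag} increasing with $\Delta[X,X]^P_t=(\Delta X_t)^2$ by the standard theory of semimartingale quadratic variation, so both conditions of Definition~\ref{2.1a} then follow. Telescoping $X^2_{\tau^n_k\wedge t}$ along $\pi_n=(\tau^n_k)_k$ and applying It\^o's formula to $x\mapsto x^2$ gives, for every $t$,
\[
[X,X]^{\pi_n}_t-[X,X]^P_t=2\int_0^t\bigl(X_{s-}-H^n_s\bigr)\,dX_s,\qquad H^n_s:=X_{\tau^n_k}\ \text{for }\tau^n_k<s\le\tau^n_{k+1},
\]
where $H^n$ is predictable because the $\tau^n_k$ are stopping times (this needs the partitions to be optional). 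For $0<s\le T$ there is the pathwise bound $\lvert X_{s-}-H^n_s\rvert\le O_T(X,\pi_n)=:\varepsilon_n$, since both $X_{\tau^n_k}$ and $X_{s-}$ are values, or left limits of values, of $X$ at times in $[\tau^n_k,\tau^n_{k+1}\mathclose{[}$.

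Next I would localize to make these bounds \emph{deterministic}. Pick a localizing sequence $(\rho_m)$ with $(X-X_0)^{\rho_m-}\in\mathcal{H}^2(P)$ (possible since $X-X_0$ is prelocally in $\mathcal{H}^2(P)$). The process $\Sigma_t:=\sum_n O_t(X,\pi_n)$ is adapted, increasing, and $P$‑a.s.\ finite for all $t$, so $\tau_m:=\inf\{t:\Sigma_t>m\}$ is a stopping time, by right‑continuity of $(\mathcal{F}_t)$, with $\tau_m\uparrow\infty$ $P$‑a.s. Put $\mu_m:=\rho_m\wedge\tau_m$ and $\widetilde X:=(X-X_0)^{\mu_m-}+X_0$. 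Then: (a) $\widetilde X-X_0\in\mathcal{H}^2(P)$ (it equals $\bigl((X-X_0)^{\rho_m-}\bigr)^{\mu_m-}$ since $\mu_m\le\rho_m$, and freezing at a stopping time maps $\mathcal{H}^2(P)$ into itself), so $\widetilde X-X_0=N+B$ with $N\in\mathcal{M}_{\mathrm{loc},0}(P)$, $B\in\mathcal{V}$, and $W:=[N,N]_\infty+V(B)_\infty^2\in L^1(P)$; (b) $\widetilde X=X$ on $[0,T]$ on $\{\mu_m>T\}$, and $P(\mu_m>T)\to 1$; (c) since $\widetilde X$ is frozen at time $\mu_m-$ and the oscillations are increasing in $t$, the quantities $\widetilde\varepsilon_n:=O_T(\widetilde X,\pi_n)$ satisfy $\sum_n\widetilde\varepsilon_n\le\sup_{t<\mu_m}\Sigma_t\le m$, hence $\widetilde\varepsilon_n\le m$ for every $n$. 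Applying the identity above to $\widetilde X$ and estimating the resulting It\^o integral $\widetilde Z^n$ (stopped at $T$) by Doob's $L^2$ inequality on the local‑martingale part together with a pathwise bound on the finite‑variation part — equivalently, $\lVert\,\cdot\,\rVert_{\mathcal{S}^2}\le C_2\lVert\,\cdot\,\rVert_{\mathcal{H}^2}$ combined with the $\mathcal{H}^2$‑estimate for stochastic integrals — gives a constant $C$ with $\lVert\widetilde Z^n\rVert_{\mathcal{S}^2(P)}^2\le C\,E[\widetilde\varepsilon_n^2\,W]$. Since $\sum_n\widetilde\varepsilon_n^2\le(\sum_n\widetilde\varepsilon_n)^2\le m^2$, monotone convergence yields
\[
\sum_n\lVert\widetilde Z^n\rVert_{\mathcal{S}^2(P)}^2\le C\,E\!\Bigl[W\sum_n\widetilde\varepsilon_n^2\Bigr]\le C\,m^2\,E[W]<\infty,
\]
so $\sup_{t\ge 0}\lvert\widetilde Z^n_t\rvert\to 0$ $P$‑a.s.; on $\{\mu_m>T\}$ this reads $\sup_{t\le T}\lvert[X,X]^{\pi_n}_t-[X,X]^P_t\rvert\to 0$. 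Letting $m\to\infty$ and then intersecting over $T\in\mathbb{N}$ completes the argument.

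The main obstacle is obtaining almost sure convergence of the entire sequence, rather than of a subsequence as $\mathcal{S}^2$‑convergence alone would give; this is exactly where the hypothesis $\sum_n O_T(X,\pi_n)<\infty$ enters, through the elementary inequality $\sum_n\varepsilon_n^2\le(\sum_n\varepsilon_n)^2$, and it is also why one localizes with the stopping time $\tau_m$ instead of with a plain $\mathcal{H}^2$‑localization: $\tau_m$ makes $\sum_n\widetilde\varepsilon_n$ bounded while keeping $W$ integrable. The remaining ingredients — the telescoping/It\^o identity, predictability of the simple integrands, the facts that freezing at $\mu_m-$ preserves $\mathcal{H}^2(P)$ and only shrinks oscillations, and the standard properties of $[X,X]^P$ — are routine, though verifying that $\tau_m$ is a stopping time and that $\sum_n O_T(\widetilde X,\pi_n)\le\sup_{t<\mu_m}\Sigma_t$ does require a little care with the interchange of the two suprema (legitimate here thanks to monotonicity of $t\mapsto O_t(X,\pi_n)$).
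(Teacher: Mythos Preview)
Your argument is correct and reaches the same conclusion via the same central identity $[X,X]^{\pi_n}_t-[X,X]^P_t=-2(K^n\bullet X)_t$ and the same summability criterion $\sum_n\lVert g_n\rVert_{L^2}^2<\infty\Rightarrow g_n\to 0$ a.s., but the localization mechanism is genuinely different from the paper's. The paper does \emph{not} introduce the stopping time $\tau_m=\inf\{t:\Sigma_t>m\}$; instead it first prelocalizes so that $X\in\mathcal{H}^4(P)$, then performs an equivalent change of measure $dQ\propto e^{-K_T}\,dP$ with $K_T=\sum_n O_T(X,\pi_n)$, under which $K_T\in L^4(Q)$ automatically while $X$ remains in $\mathcal{H}^4(Q)$; a Cauchy--Schwarz step then separates $K_T^2$ from $[M,M]_T$ and $V(A)_T^2$ and yields $\sum_n\lVert(K^n\bullet X)^T\rVert_{\mathcal{S}^2(Q)}^2<\infty$. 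Your route trades the measure change and the $\mathcal{H}^4$ hypothesis for an extra stopping time and a purely $\mathcal{H}^2$ estimate: by freezing at $\mu_m-$ you force the pathwise bound $\sum_n\widetilde\varepsilon_n\le m$, so no Schwarz splitting is needed and $W=[N,N]_\infty+V(B)_\infty^2\in L^1$ suffices. This is arguably more elementary (no Girsanov-type invariance of stochastic integrals, no $\mathcal{H}^4$), at the cost of the bookkeeping you flag at the end---checking that $\Sigma$ is right-continuous so that $\tau_m$ is a stopping time, and that freezing at $\mu_m-$ indeed gives $O_T(\widetilde X,\pi_n)\le\Sigma_{\mu_m-}\le m$. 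Both approaches handle the {\cadlag} case without modification of the continuous-path argument in \cite{Davis_Obloj_Siorpaes_2017}.
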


\begin{proof}
We can assume $X_0 = 0$ without loss of generality.
Recall that $X$ satisfies
\begin{equation*}
[X,X]^P = X^2 - X_{-} \bullet X 
\end{equation*}
where the last term of the right hand wide denotes the stochastic integral with respect to $P$.
First we suppose $X \in \mathcal{H}^4(P)$.
Fix an arbitrary $T$ in $\mathbb{R}_{\geq 0}$ and define $K_T := \sum_{n} O_T(X,\pi_n)$.
Then, $K_T$ is a random variable since it is defined as the (convergent)
series of a sequence of positive random variables.
We define a probability measure $Q$ equivalent to $P$ by 
\begin{equation*}
Q(A) = \frac{1}{E[\exp (-K_T)]} \int_A e^{-K_T(\omega)} P(d\omega).
\end{equation*}
Then, $X$ is again a semimartingale with respect to $Q$.
Here recall that the stochastic integral is invariant under an equivalent change of probability measure.
(See, for example, 
%He, Wang, and Yan~
\cite[12.22 Theorem]{He_Wang_Yan_1992}.)
We have $K_T \in L^4(Q)$, and $X \in \mathcal{H}^4(Q)$ by definition of $Q$.
%\footnote{Now the density $\frac{dQ}{dP}$ is bounded.}.
Let 
\begin{equation*}
H^n := \sum_{k} X_{\tau_k^n} 1_{\rBrack \tau_k^n, \tau_{k+1}^n \rBrack}, \quad K^n := H^n - X_-.
\end{equation*}
Then $H^n$ and $K^n$ are locally bounded predictable processes.
By definition, we see that
\begin{equation} \label{5.1g}
\sum_{n \in \mathbb{N}} \sup_{t \in [0,T]} \lvert K^n_t \rvert^2 \leq \left( \sum_{n \in \mathbb{N}} \sup_{t \in [0,T]} \lvert K^n_t \rvert \right)^2 \leq K_T^2 \in L^2(Q).
\end{equation}
Consider a decomposition $X = M + A$ such that
$M \in \mathcal{M}_{\mathrm{loc},0}(Q)$ and $A \in \mathcal{V}$.
Since $K^n$ is locally bounded, the decomposition $K^n \bullet X = K^n \bullet M + K^n \bullet A$
satisfies $K^n \bullet M \in \mathcal{M}_{\mathrm{loc},0}(Q)$ and $K^n \bullet A \in \mathcal{V}$.
Combining \eqref{5.1c}, \eqref{5.1g}, the monotone convergence theorem and Schwarz's inequality, 
we get
\begin{align*}
& \sum_{n \in \mathbb{N}} \left\lVert (K^n \bullet X)^T \rvert \right\rVert_{\mathcal{S}^2(Q)}^2 \leq C \left\lVert K_T \right\rVert_{L^4(Q)}^2 \left( \left\lVert \lbrack M,M \rbrack_T^{1/2} \right\rVert_{L^4(Q)}^2 + \left\lVert V(A)_T \right\rVert_{L^4(Q)}^2 \right)
\end{align*}
where $C$ is a positive constant.
Since $X \in \mathcal{H}^4(Q)$, we can take a decomposition such that
$\lVert \lbrack M,M \rbrack_T^{1/2} \rVert_{L^4(Q)}$
and $\lVert V(A)_T \rVert_{L^4(Q)}^2$
are finite.
Hence $ \sum_{n} \left\lVert (K^n \bullet X)^T \rvert \right\rVert_{\mathcal{S}^2(Q)}^2$
converges.
This proves that $(K^n \bullet X)^*_T \to 0$ holds $Q$-almost surely, and hence $P$-almost surely.
Consequently, we have $P$-almost surely
\begin{align*}
& \sup_{t \in [0,T]} \left\lvert \lbrack X(\omega),X(\omega) \rbrack^{\pi_n(\omega)}_t - [X,X]^P_t(\omega) \right\rvert  = \sup_{t \in [0,T]} \left\lvert (K^n \bullet X)_t(\omega) \right\rvert  \longrightarrow 0,
\end{align*}  
as $n \to \infty$.
This completes the proof for $X \in \mathcal{H}^4(P)$.
The general case is proved by the above argument and the usual prelocalization procedure.
\end{proof}

\begin{Cor} \label{5.1gb}
Suppose that, for all $P \in \mathcal{P}$, a {\cadlag} adapted process $X$ is a $P$-semimartingale
and $\Pi = (\pi_n)$ is a sequence of optional partitions
such that $\lvert \pi_n(\omega) \rvert \to 0$ holds $P$-almost surely.
Moreover, suppose that
for every $P \in \mathcal{P}$ and every $T \in \mathbb{R}_{\geq 0}$, 
the series $\sum_{n} O_T(X(\omega),\pi_n(\omega))$ converges $P$-almost surely.
Then, for all $P \in \mathcal{P}$,
$P$-almost every path of $X$ has quadratic variation along $\Pi$
that coincides with the usual quadratic variation $[X,X]^{P}$, $P$-almost surely.
\end{Cor}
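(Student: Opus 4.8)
The plan is to deduce Corollary~\ref{5.1gb} directly from Proposition~\ref{5.1f}, applied separately for each fixed $P \in \mathcal{P}$ (and, when $X$ is vector-valued, to each scalar component and each pairwise sum of components). The entire mathematical content sits in Proposition~\ref{5.1f}; the corollary is merely a matter of applying it uniformly over the family $\mathcal{P}$.

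First I would fix $P \in \mathcal{P}$ and, if $X = (X^1,\dots,X^d)$, reduce to the scalar case. Each $X^i$ and each $X^i + X^j$ is again a $P$-semimartingale, the assumption $\lvert \pi_n(\omega) \rvert \to 0$ $P$-a.s.\ is unchanged, and the elementary Lipschitz bounds $O_T(X^i,\pi_n) \le O_T(X,\pi_n)$ and $O_T(X^i+X^j,\pi_n) \le 2\,O_T(X,\pi_n)$ — both immediate from $\lvert X^i_s - X^i_u \rvert \le \lVert X_s - X_u \rVert_{\mathbb{R}^d}$ — show that the hypothesis $\sum_n O_T(X(\omega),\pi_n(\omega)) < \infty$ $P$-a.s.\ is inherited by all of these auxiliary scalar processes, for every fixed $T$. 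Thus for each of these finitely many processes the hypotheses of Proposition~\ref{5.1f} are met with the same $P$.

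Next I would invoke Proposition~\ref{5.1f} for the scalar $P$-semimartingales $X^i$ and $X^i + X^j$, for all pairs $i,j$. This produces, for each, a $P$-null set outside of which the path lies in $QV(\Pi(\omega))$ with quadratic variation $P$-a.s.\ equal to the usual semimartingale one. Discarding the union of these finitely many null sets, Definition~\ref{2.1b} gives $X(\omega) \in QV(\Pi(\omega);\mathbb{R}^d)$, and the polarization identity defining $[X^i,X^j]^{\Pi(\omega)}$ together with the corresponding identity for the semimartingale covariations shows that the whole quadratic covariation matrix of $X(\omega)$ along $\Pi$ coincides $P$-a.s.\ with $[X,X]^P$. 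Since $P \in \mathcal{P}$ was arbitrary, this is the assertion (in the scalar case the argument is just the $d=1$ instance).

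There is no substantial obstacle here. The only bookkeeping points are: (i) ``for all $T$'' in the hypothesis is already a family of $P$-a.s.\ statements, so no extra countable-intersection argument over $T$ is needed beyond what Proposition~\ref{5.1f} already absorbs; and (ii) the transfer of the oscillation-summability hypothesis to $X^i$ and $X^i + X^j$ in the multidimensional case, which is the trivial estimate recorded above.
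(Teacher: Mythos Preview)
Your proposal is correct and matches the paper's approach: the paper states Corollary~\ref{5.1gb} without proof, treating it as an immediate consequence of Proposition~\ref{5.1f} applied separately to each $P \in \mathcal{P}$. Your multidimensional reduction via Definition~\ref{2.1b} and polarization is a harmless bonus, though the statement as written (with the notation $[X,X]^P$) is intended for scalar $X$.
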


\begin{Rem} \label{5.1h}
An example of the optional partitions used in Proposition~\ref{5.1f}
can be given in the following manner. For a {\cadlag} semimartingale $X$,
we define a sequence of stopping times inductively as
\begin{align*}
T_0^n(\omega) & = 0,   \\
T_{k+1}^{n}(\omega) & = \inf\left\{ t > T_k^{n}(\omega) \biggm\vert \lvert X_t(\omega) - X_{T_k^n}(\omega) \rvert > \frac{1}{2^{n+1}} \right\} \wedge \left( T_k^n(\omega) + \frac{1}{n} \right).
\end{align*}
Let $\pi_n = (T_k^n)_{k \in \mathbb{N}}$ and $\Pi = (\pi_n)_{n \in \mathbb{N}}$.
Then for each $\omega \in \Omega$, we have $\lvert T^n_k(\omega) - T^n_{k+1}(\omega) \rvert \leq 1/n$.
Furthermore
\begin{equation*}
\lvert X_s(\omega) - X_{u}(\omega) \rvert \leq \lvert X_s(\omega) - X_{T_{k}^n}(\omega) \rvert + \lvert X_u(\omega) - X_{T_{k}^n}(\omega) \rvert  \leq \frac{1}{2^{n}}
\end{equation*}
holds for arbitrary $s,u \in [T_k^n(\omega),T_{k+1}^{n}(\omega)[$.
Therefore, for all $P \in \mathcal{P}$,
\begin{equation*}
\sum_{n \in \mathbb{N}} O_t(X(\omega),\pi_n) \leq \sum_{n \in \mathbb{N}} \frac{1}{2^n} < \infty, \quad P\text{-a.s.}
\end{equation*}
\end{Rem}

\begin{Rem} \label{5.1i} 
Because of Theorem~\ref{2.2c} and Proposition~\ref{5.1f},
the It{\^o}-F{\"o}llmer integral $\int_0^t g(X_{s-}) dX_s$ for $g$ of $C^1$-class
converges almost surely along $\Pi = (\pi_n)$ provided that $\sum_{n} O_t(X,\pi_n) < \infty$ a.s..
Under this assumption, $\Pi$ seems to control the oscillations of the paths of $X$, only.
It is known that, in general, we have to control the oscillation of integrands 
to obtain the pathwise convergence of stochastic integrals:
see
%Karandikar~
\cite{Karandikar_1995}.
The oscillation of the paths of $g \circ X_{-}$ is controlled by that of $X$ in this particular case.
Hence, our result is valid.
\end{Rem}

\section{Integral equations with respect to It{\^o}-F{\"o}llmer integrals and their applications to finance}

We have seen that some fundamental formulas in stochastic integration theory are valid 
within the framework of It{\^o}-F{\"o}llmer integrals.
In this section, using these formulas, we compute explicit solutions to certain integral equations.

\subsection{Linear equations}

For a given $X \in QV(\Pi)$ and $H \in D(\mathbb{R}_{\geq 0},\mathbb{R})$,
we consider the following equation for $Y$.
\begin{equation} \label{3.1a}
Y_t = H_t + \int_0^t Y_{s-} dX_s .
\end{equation}
If $H$ is (not) constant, the equation \eqref{3.1a} is called an (in)homogeneous linear equation.
A solution of \eqref{3.1a} is an admissible integrand of $X$ that satisfies \eqref{3.1a}.

We first consider the following proposition.

\begin{Prop} \label{3.1c}
Let $X \in QV(\Pi)$ and let
\begin{equation*}
\mathcal{E}(X)_t = \exp\left( X_t -X_0 - \frac{1}{2}[X,X]^{\mathrm{c}}_t \right) \prod_{0 < s \leq t} (1 + \Delta X_s) e^{-\Delta X_s}.
\end{equation*}
Then $\mathcal{E}(X)$ is the unique solution to the homogeneous linear equation
\begin{equation} \label{3.1d}
Y_t = 1 + \int_0^t Y_{s-} dX_s.
\end{equation}
\end{Prop}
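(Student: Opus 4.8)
The plan is to prove existence and uniqueness separately. Existence will come from the It\^o--F\"ollmer formula, Theorem~\ref{2.2c}, applied to a suitable $C^{1,2}$ function; uniqueness from a ``variation of constants'' argument using the inverse exponential together with associativity (Theorem~\ref{2.3m}) and the integration by parts formula (Corollary~\ref{2.3s}).

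For existence, I would first record the structure of the product $P_t:=\prod_{0<s\le t}(1+\Delta X_s)e^{-\Delta X_s}$: it converges absolutely (only finitely many $s\le t$ have $|\Delta X_s|>\tfrac12$, and for the rest $\log(1+x)-x=O(x^2)$ together with $\sum_{0<s\le t}(\Delta X_s)^2<\infty$ does the job), and it is a {\cadlag} path of locally finite variation which is purely discontinuous: $P^{\mathrm{c}}=0$ and $\Delta P_s=P_{s-}\bigl((1+\Delta X_s)e^{-\Delta X_s}-1\bigr)$. On an interval on which every jump satisfies $|\Delta X_s|\le\tfrac12$ this follows by writing $P_t=P_ae^{L_t}$, with $L$ a pure-jump finite-variation function, and applying the change-of-variables formula for Lebesgue--Stieltjes integrals; on any compact there are only finitely many exceptional jumps, at which $P$ simply jumps. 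Then I would set $f(a_1,a_2,x)=a_2\,e^{x-X_0-a_1/2}$ and $A=([X,X]^{\mathrm{c}},P)\in FV_{\mathrm{loc}}^{2}$, so that $\mathcal{E}(X)_t=f(A_t,X_t)$ and, since $\nabla_x f=f$, also $\mathcal{E}(X)_t=\nabla_x f(A_t,X_t)$; in particular $\mathcal{E}(X)$ is an admissible integrand of $X$. Applying Theorem~\ref{2.2c} to $f$ and $(A,X)$: the $\partial f/\partial a_1$ drift term ($-\tfrac12\mathcal{E}(X)_{s-}\,d[X,X]^{\mathrm{c}}_s$) cancels the $\tfrac12\,\partial^2 f/\partial x^2$ term; the $\partial f/\partial a_2$ term is an integral against $(A^2)^{\mathrm{c}}=P^{\mathrm{c}}=0$, hence zero; and the jump series vanishes term by term, because a direct computation using the continuity of $[X,X]^{\mathrm{c}}$ gives $\mathcal{E}(X)_s=\mathcal{E}(X)_{s-}(1+\Delta X_s)$, hence $\Delta\mathcal{E}(X)_s=\mathcal{E}(X)_{s-}\Delta X_s$. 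What survives is exactly $\mathcal{E}(X)_t-1=\int_0^t\mathcal{E}(X)_{s-}\,dX_s$.

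For uniqueness, if $Y$ is another solution then $D:=Y-\mathcal{E}(X)$ is, by Remark~\ref{2.3b}, an admissible integrand of $X$ with $D_t=\int_0^t D_{s-}\,dX_s$ (so $D_0=0$), and it suffices to prove $D\equiv0$. Since $\sum_{0<s\le t}(\Delta X_s)^2<\infty$, the set $\{s:\Delta X_s=-1\}$ is locally finite, and by an induction over these (finitely many on each compact) times it is enough to run the argument on an interval on which $\Delta X_s\neq-1$ throughout. On such an interval set $X'_t=-X_t+[X,X]^{\mathrm{c}}_t+\sum_{0<s\le t}\tfrac{(\Delta X_s)^2}{1+\Delta X_s}=-X+(\text{an }FV_{\mathrm{loc}}\text{ path})$; then $X'\in QV(\Pi)$ by Corollary~\ref{2.1z}, one has $1+\Delta X'_s=1/(1+\Delta X_s)\neq0$, and by the existence part $\mathcal{E}(X')$ solves $\mathcal{E}(X')=1+\int\mathcal{E}(X')_{s-}\,dX'_s$, is an admissible integrand of $X'$ and hence (Remark~\ref{2.3d}) of $X$, and never vanishes. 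Now apply Corollary~\ref{2.3s} to $D$ and $\mathcal{E}(X')$ and expand every term using associativity (Theorem~\ref{2.3m}) and the covariation formulas (Proposition~\ref{2.3g}, Corollary~\ref{2.1z}): writing $G_s:=D_{s-}\mathcal{E}(X')_{s-}$, the two integrals against $X$ cancel and the remaining drift and jump contributions collapse to
\[
D_t\mathcal{E}(X')_t=\sum_{0<s\le t}G_s\left(\frac{(\Delta X_s)^2}{1+\Delta X_s}-(\Delta X_s)^2+\frac{(\Delta X_s)^3}{1+\Delta X_s}\right)=0,
\]
the parenthesis being identically zero. Since $\mathcal{E}(X')_t\neq0$, this forces $D\equiv0$ on the interval; feeding $D=0$ into the jump relation $\Delta D_s=D_{s-}\Delta X_s$ at the exceptional times and iterating covers any $[0,T]$, so $D\equiv0$ and $Y=\mathcal{E}(X)$.

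The bulk of the work is the mechanical bookkeeping of the It\^o-formula terms in both halves; the two genuinely delicate points are (a) the fact that the product $P$ is purely discontinuous, which is precisely what kills the $dP^{\mathrm{c}}$ term in the existence proof, and (b) organizing the uniqueness argument around the exceptional times $\Delta X_s=-1$ (where $\mathcal{E}(X)$ vanishes), so that the inverse exponential $\mathcal{E}(X')$ is available on the complementary intervals and the cancellation $\tfrac{(\Delta X_s)^2(1+\Delta X_s)}{1+\Delta X_s}=(\Delta X_s)^2$ can be exploited.
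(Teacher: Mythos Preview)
Your existence argument is essentially the paper's: write $\mathcal{E}(X)=f(A,X)$ for a $C^{1,2}$ function $f$ with $\nabla_x f=f$ and $A=([X,X]^{\mathrm{c}},P)\in FV_{\mathrm{loc}}^2$, then apply Theorem~\ref{2.2c}.

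For uniqueness the paper takes a different route. Instead of the full inverse exponential $\mathcal{E}(X')$, it multiplies an arbitrary solution $Y$ by the \emph{continuous} part of the inverse, $e^{-Z}$ with $Z=X-X_0-\tfrac12[X,X]^{\mathrm{c}}$. Applying the It\^o--F\"ollmer formula to $W:=Ye^{-Z}$ (after checking $(Y,Z)\in QV(\Pi;\mathbb{R}^2)$ and that the needed integrals split) yields $W_t-1=\sum_{0<s\le t}\Delta W_s$, so $W$ satisfies the purely discontinuous finite-variation equation $W_t=1+\int_0^t W_{s-}\,dR_s$ with $R_t=\sum_{0<s\le t}\bigl((1+\Delta X_s)e^{-\Delta X_s}-1\bigr)$; since $U:=\mathcal{E}(X)e^{-Z}$ satisfies the same equation, Gronwall for Stieltjes integrals gives $W=U$ and hence $Y=\mathcal{E}(X)$. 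The payoff of this detour is that $e^{-Z}$ is always defined and never vanishes, so the times $\Delta X_s=-1$ cause no trouble at all. Your approach is the more direct variation-of-constants computation and the cancellation $\tfrac{(\Delta X_s)^2}{1+\Delta X_s}-(\Delta X_s)^2+\tfrac{(\Delta X_s)^3}{1+\Delta X_s}=0$ is exactly right, but the phrase ``run the argument on an interval on which $\Delta X_s\neq-1$'' hides a real technicality: the paper's framework lives on $[0,\infty)$ with a fixed partition sequence $\Pi$, and there is no ready-made stopping or restarting lemma. A clean way to close this gap without leaving the framework is to define $X'$ \emph{globally} by simply omitting the exceptional jumps from the sum $\sum(\Delta X_s)^2/(1+\Delta X_s)$; then $1+\Delta X'_s\neq 0$ everywhere, $\mathcal{E}(X')$ never vanishes, and your integration-by-parts computation gives $D_t\,\mathcal{E}(X')_t=-\sum_{\tau_j\le t}D_{\tau_j-}\mathcal{E}(X')_{\tau_j-}$ instead of $0$, from which an immediate induction on $j$ (each $D_{\tau_j-}=0$ once $D\equiv0$ on $[0,\tau_j)$) finishes the proof.
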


\begin{proof}
Let
\begin{equation*}
V(X)_t = \prod_{0 < s \leq t} (1+\Delta X_s) e^{-\Delta X_s}.
\end{equation*}
It is well known that this infinite product converges absolutely
and $V(X)$ is a {\cadlag} purely discontinuous function of locally finite variation.
(See, for example,
%Jacod and Shiryaev~
\cite[Chapter I, 4.61 Theorem]{Jacod_Shiryaev_1987}).
Here we define $f(x,a,b) = e^{x - X_0-a/2}b$.
It is easy to see that $f$ is of $C^{2}$-class.
Since
$\frac{\partial}{\partial x} f(X_t,[X,X]^{\mathrm{c}}_t,V_t) = \mathcal{E}(X)_t$ holds,
the function $\mathcal{E}(X)$ is an admissible integrand of $X$.
Applying the It{\^o}-F{\"o}llmer formula to $f(X_t,[X,X]^{\mathrm{c}}_t,V_t)$, we see that $Y = \mathcal{E}(X)$ satisfies \eqref{3.1d}.

For uniqueness, we will take an arbitrary solution $Y$ of \eqref{3.1d} and show that $Y=\mathcal{E}(X)$.
Let $Y$ be a solution of \eqref{3.1d},
and let $Z = X - X_0 -(1/2)[X,X]^{\mathrm{c}}$. 
Then we see that $\mathbf{X} := (Y,Z)$ belongs to $QV(\Pi;\mathbb{R}^2)$
by the representation
$Y_t = 1 + \int_0^t Y_{s-} dX_s$
and by Corollary~\ref{2.1z} and Proposition~\ref{2.3g}.
The It{\^o}-F{\"o}llmer formula implies the existence of the It{\^o}-F{\"o}llmer integral
$\int_0^t \left\langle \nabla g(Y_{s-},Z_{s-}),d\mathbf{X}_s \right\rangle$
where $g(y,z) = y e^{-z}$.
Since $Y$ is represented as $Y=F(A,X)$ by some $F$ of $C^{1,2}$-class and $A$ of locally finite variation,
the It{\^o}-F{\"o}llmer integral
$\int_0^t g(Y_{s-},Z_{s-}) dX_s$
exists. Moreover, by Remark~\ref{2.3d}
\begin{equation*}
\int_0^t g(Y_{s-},Z_{s-}) dZ_s = \int_0^t g(Y_{s-},Z_{s-}) dX_s - \frac{1}{2} \int_0^t g(Y_{s-},Z_{s-}) d[X,X]^{\mathrm{c}}_s
\end{equation*}
also exists.
Therefore, we can write
\begin{equation} \label{3.1e}
\int_0^t \left\langle \nabla g(Y_{s-},Z_{s-}),d\mathbf{X}_s \right\rangle =  -\int_0^t g(Y_{s-},Z_{s-}) dZ_s + \int_0^t e^{-Z_{s-}} dY_s.
\end{equation}
Applying the It{\^o}-F{\"o}llmer formula to $W = g(Y,Z) = Y e^{-Z}$ and using \eqref{3.1e},
we obtain
\begin{align*}
W_t - 1 & = \int_0^t -W_{s-} dZ_s + \int_0^t e^{-Z_{s-}} dY_s + \frac{1}{2} \int_{0}^{t} W_{s-} d[Z,Z]^{\mathrm{c}}_s  \\
& \quad + \int_{0}^{t} -e^{-Z_{s-}} d[Y,Z]^{\mathrm{c}}_t + \sum_{0 < s \leq t} \left\{ \Delta W_s + W_{s-} \Delta Z_s - e^{-Z_{s-}} \Delta Y_s \right\}  \\
& = \sum_{0 < s \leq t} \Delta W_s.
\end{align*}
Note that $W$ satisfies
\begin{equation} \label{3.1f}
W_t = 1+ \int_0^t W_{s-} dR_s,
\end{equation}
where $R$ is a purely discontinuous function of locally finite variation defined by
\begin{equation*}
R_t = \sum_{0 < s \leq t} \left\{ (1+\Delta X_s)e^{-\Delta X_s} - 1 \right\}.
\end{equation*}
The function $U := \mathcal{E}(X) e^{-Z}$ clearly satisfies \eqref{3.1f}
because $\mathcal{E}(X)$ is a solution of \eqref{3.1d}.
Hence 
\begin{equation*}
W_t -U_t = \int_0^t (W_{s-} - U_{s-}) dR_s
\end{equation*}
holds for all $t$.
Gronwall's lemma for Stieltjes integrals implies $W = U$,
and so $Y = \mathcal{E}(X)$.
This establishes uniqueness. 
\end{proof}

\begin{Rem} \label{3.1g}
If $X \in QV(\Pi)$ satisfies $\Delta X_t \neq -1$ for all $t \geq 0$, we have 
$\mathcal{E}(X)_t \neq 0$ and $\mathcal{E}(X)_{t-} \neq 0$ for all $t \geq 0$.
Moreover, if $\Delta X_t > -1$ holds for all $t \geq 0$,
then $\mathcal{E}(X)_t > 0$ and $\mathcal{E}(X)_{t-} > 0$ hold for all $t \geq 0$.
\end{Rem}

Remarks \ref{2.2fa} and \ref{3.1g} allow us to apply the It{\^o}-F{\"o}llmer formula
(Theorem~\ref{2.2c}) to $1/\mathcal{E}(X)_t$.
Consequently, we obtain the following lemma.

\begin{Lem} \label{3.1ga}
Let $X \in QV(\Pi)$ satisfy $\Delta X_{t} \neq -1$ for all $t \in \mathbb{R}_{\geq 0}$.
Then $1/\mathcal{E}(X) \in QV(\Pi)$ and the following representation holds.
\begin{align}  \label{3.1gb}
\frac{1}{\mathcal{E}(X)_t} - 1
& = - \int_0^t \frac{dX_s}{\mathcal{E}(X)_{s-}} + \int_{0}^{t} \frac{d[X,X]^{\mathrm{c}}_s}{\mathcal{E}(X)_{s-}} + \sum_{0 < s \leq t} \frac{1}{\mathcal{E}(X)_{s-}} \left[ \frac{(\Delta X_s)^2}{1 + \Delta X_s} \right].
\end{align}
\end{Lem}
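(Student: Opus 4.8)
The plan is to apply the It\^o--F\"ollmer formula (Theorem~\ref{2.2c}) to the composition $g\circ\mathcal{E}(X)$ with $g(y)=1/y$, and then to massage the three resulting terms using associativity (Theorem~\ref{2.3m}) and the quadratic-variation formula of Proposition~\ref{2.3f}. First I would record that $\mathcal{E}(X)\in QV(\Pi)$: by Proposition~\ref{3.1c} we have $\mathcal{E}(X)_t = 1 + \int_0^t \mathcal{E}(X)_{s-}\,dX_s$ with admissible integrand $\mathcal{E}(X)$, so Proposition~\ref{2.3f} gives $[\mathcal{E}(X),\mathcal{E}(X)]_t = \int_0^t \mathcal{E}(X)_{s-}^2\,d[X,X]_s$, whence $[\mathcal{E}(X),\mathcal{E}(X)]^{\mathrm{c}}_t = \int_0^t \mathcal{E}(X)_{s-}^2\,d[X,X]^{\mathrm{c}}_s$. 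Since $\Delta X_t \neq -1$, Remark~\ref{3.1g} gives $\mathcal{E}(X)_t\neq 0$ and $\mathcal{E}(X)_{t-}\neq 0$ for all $t$; because the closure of the range of a {\cadlag} path on a compact interval is compact (it consists of the values together with the left limits), the path $t\mapsto\mathcal{E}(X)_t$ stays, on each $[0,T]$, in a compact subset of the open set $\mathbb{R}\setminus\{0\}$ on which $g$ is $C^2$. This is precisely the situation covered by Remark~\ref{2.2fa}.

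Next I would apply Theorem~\ref{2.2c}, in the form of Remark~\ref{2.2fa} and with no finite-variation variable, to $1/\mathcal{E}(X)_t = g(\mathcal{E}(X)_t)$. Using $g(\mathcal{E}(X)_0)=1$, $g'(y)=-y^{-2}$, $g''(y)=2y^{-3}$, this yields
\begin{equation*}
\frac{1}{\mathcal{E}(X)_t} - 1 = -\int_0^t \frac{d\mathcal{E}(X)_s}{\mathcal{E}(X)_{s-}^2} + \int_0^t \frac{d[\mathcal{E}(X),\mathcal{E}(X)]^{\mathrm{c}}_s}{\mathcal{E}(X)_{s-}^3} + \sum_{0 < s \leq t} \left\{ \frac{1}{\mathcal{E}(X)_s} - \frac{1}{\mathcal{E}(X)_{s-}} + \frac{\Delta \mathcal{E}(X)_s}{\mathcal{E}(X)_{s-}^2} \right\},
\end{equation*}
and in particular asserts the existence of the first It\^o--F\"ollmer integral. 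For the first term I would invoke associativity: with $\eta_s = -\mathcal{E}(X)_s^{-2}$ (a {\cadlag} path) and $Y_t = \mathcal{E}(X)_t - 1 = \int_0^t \mathcal{E}(X)_{s-}\,dX_s$, Theorem~\ref{2.3m} identifies $-\int_0^t \mathcal{E}(X)_{s-}^{-2}\,d\mathcal{E}(X)_s$ with $-\int_0^t \mathcal{E}(X)_{s-}^{-1}\,dX_s$. For the second term I would substitute $[\mathcal{E}(X),\mathcal{E}(X)]^{\mathrm{c}}_t = \int_0^t \mathcal{E}(X)_{s-}^2\,d[X,X]^{\mathrm{c}}_s$, giving $\int_0^t \mathcal{E}(X)_{s-}^{-1}\,d[X,X]^{\mathrm{c}}_s$. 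For the jump term I would use $\mathcal{E}(X)_s = \mathcal{E}(X)_{s-}(1+\Delta X_s)$ and $\Delta\mathcal{E}(X)_s = \mathcal{E}(X)_{s-}\Delta X_s$; a short algebraic simplification turns each summand into $\mathcal{E}(X)_{s-}^{-1}(\Delta X_s)^2/(1+\Delta X_s)$, which is exactly what appears in \eqref{3.1gb}. Assembling the three pieces gives \eqref{3.1gb}.

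Finally, to see $1/\mathcal{E}(X)\in QV(\Pi)$ I would apply Proposition~\ref{2.1m} together with Remark~\ref{2.1ya} to the composition of $\mathcal{E}(X)\in QV(\Pi)$ with the (locally Lipschitz, $C^{0,1}$) function $g$ restricted to the compact set above; alternatively, the right-hand side of \eqref{3.1gb} already displays $1/\mathcal{E}(X)$ as an It\^o--F\"ollmer integral of an admissible integrand of $X$ plus a path of locally finite variation, so Proposition~\ref{2.3f} and Corollary~\ref{2.1z} apply. The only genuinely delicate point is the very first one --- justifying that the globally non-$C^2$ map $g(y)=1/y$ may legitimately be fed into the It\^o--F\"ollmer formula --- which is exactly why Remarks~\ref{2.2fa} and~\ref{3.1g} were isolated beforehand; the remainder is bookkeeping, the one spot needing care being the identification of the continuous part of $[\mathcal{E}(X),\mathcal{E}(X)]$.
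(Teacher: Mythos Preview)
Your proposal is correct and follows the same approach the paper indicates, namely applying the It\^o--F\"ollmer formula (Theorem~\ref{2.2c} via Remarks~\ref{2.2fa} and~\ref{3.1g}) to $1/\mathcal{E}(X)$. The only minor difference is that you treat $\mathcal{E}(X)$ itself as the $QV(\Pi)$ integrator and then invoke associativity (Theorem~\ref{2.3m}) to pass from $d\mathcal{E}(X)$ to $dX$, whereas one could alternatively apply Theorem~\ref{2.2c} directly to the explicit representation $1/\mathcal{E}(X)_t = e^{-(X_t-X_0-[X,X]^{\mathrm{c}}_t/2)}/V(X)_t$ as a $C^{1,2}$ function of $(X,[X,X]^{\mathrm{c}},V(X))$ and obtain \eqref{3.1gb} without the detour through associativity; either route is fine.
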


With the help of Proposition~\ref{3.1c} and Lemma~\ref{3.1ga}, we obtain the following proposition.
The first expression \eqref{3.1i} was introduced by
%Jaschke~
\cite{Jaschke_2003}
for inhomogeneous linear SDEs.

\begin{Prop} \label{3.1h}
Suppose that $X \in QV(\Pi)$ satisfies
$\Delta X_s \neq -1$ for all $t \in \mathbb{R}_{\geq 0}$
and that $H$ is an admissible integrand of $X$.
Define
\begin{equation} \label{3.1i}
Z_t = H_t - \mathcal{E}(X)_t \int_0^t H_{s-} d\left( \frac{1}{\mathcal{E}(X)} \right)_s, \quad t \in \mathbb{R}_{\geq 0}.
\end{equation}
Then, it is the unique solution to the following inhomogeneous linear equation.
\begin{equation} \label{3.1j}
Z_t = H_t + \int_0^t Z_{s-} dX_s.
\end{equation}
Moreover, if $H$ is represented as
\begin{equation} \label{3.1ja}
H_t = \int_0^t \xi_{s-} dX_s + A_t
\end{equation}
by an admissible integrand $\xi$ and $A \in FV_{\mathrm{loc}}$, then $Z$ admits another expression as follows.
\begin{align} \label{3.1jb}
Z_t
&= \mathcal{E}(X)_t \left(  H_0 + \int_0^t \frac{dH_s}{\mathcal{E}(X)_{s-}} - \int_0^t \frac{d[H,X]^{\mathrm{c}}_s}{\mathcal{E}(X)_{s-}} - \sum_{0 < s \leq t} \frac{\Delta H_s \Delta X_s}{\mathcal{E}(X)_{s-} \left(1+\Delta X_s \right)} \right).
\end{align}
\end{Prop}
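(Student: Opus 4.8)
The plan is to read \eqref{3.1i} as a pathwise variation-of-constants formula with integrating factor $1/\mathcal{E}(X)$, and to verify all of the claims by integration by parts (Corollary~\ref{2.3s}) together with the associativity rule (Theorem~\ref{2.3m}) and the representation \eqref{3.1gb}. Write $N=1/\mathcal{E}(X)$. By Lemma~\ref{3.1ga}, $N\in QV(\Pi)$, and since $\mathcal{E}(X)_t$---hence $N_t$---is of the form $g(C_t,X_t)$ with $C\in FV_{\mathrm{loc}}$ and $g$ of class $C^1$ on the relevant open domain (note $\Delta X\neq -1$ keeps us away from the zeros), Remark~\ref{2.3b}(iii), combined with Remarks~\ref{2.2fa} and \ref{3.1g}, shows $N$ is an admissible integrand of $X$; moreover \eqref{3.1gb} exhibits $N$ as $-\int_0^\cdot N_{s-}\,dX_s$ plus a path of $FV_{\mathrm{loc}}$. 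As $HN$ is again an admissible integrand of $X$ (a product of such, again via Remark~\ref{2.3b}(iii)), the integral $W_t:=\int_0^t H_{s-}\,dN_s$ exists: on the $dX$-part this follows from Theorem~\ref{2.3m} (existence reduces to that of $\int_0^t H_{s-}N_{s-}\,dX_s$) and the $FV_{\mathrm{loc}}$-part is an ordinary Stieltjes integral; the same decomposition shows $W=\int_0^\cdot(-H_{s-}N_{s-})\,dX_s+B_t$ with $B\in FV_{\mathrm{loc}}$. By Lemma~\ref{2.3h} and Remark~\ref{2.3b}(iii) it then follows that $\mathcal{E}(X)W$, and therefore $Z=H-\mathcal{E}(X)W$, are admissible integrands of $X$, so in particular $\int_0^t Z_{s-}\,dX_s$ is meaningful.

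To see that $Z$ solves \eqref{3.1j}, I would apply the integration by parts formula to $\mathcal{E}(X)_t=1+\int_0^t\mathcal{E}(X)_{s-}\,dX_s$ and $W_t$; the boundary term vanishes since $W_0=0$. Decomposing $W$ as above and using Theorem~\ref{2.3m} one gets $\int_0^t\mathcal{E}(X)_{s-}\,dW_s=-\int_0^t H_{s-}\,dX_s+\int_0^t H_{s-}\,d[X,X]^{\mathrm{c}}_s+\sum_{0<s\le t}H_{s-}\tfrac{(\Delta X_s)^2}{1+\Delta X_s}$ (using $\mathcal{E}(X)_{s-}N_{s-}=1$), while $\int_0^t W_{s-}\,d\mathcal{E}(X)_s=\int_0^t W_{s-}\mathcal{E}(X)_{s-}\,dX_s$; and $[\mathcal{E}(X),W]$, computed from Proposition~\ref{2.3g} and Corollary~\ref{2.1z} (continuous part from the $dX$-coefficients, jump part $\sum_{0<s\le t}\Delta\mathcal{E}(X)_s\,\Delta W_s$ with $\Delta W_s=H_{s-}\Delta N_s$), equals $-\int_0^t H_{s-}\,d[X,X]^{\mathrm{c}}_s-\sum_{0<s\le t}H_{s-}\tfrac{(\Delta X_s)^2}{1+\Delta X_s}$. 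Adding these, the $d[X,X]^{\mathrm{c}}$-terms and the jump corrections cancel and one is left with $\mathcal{E}(X)_tW_t=-\int_0^t H_{s-}\,dX_s+\int_0^t\mathcal{E}(X)_{s-}W_{s-}\,dX_s$. By linearity of the It\^o--F\"ollmer integral on admissible integrands (Remark~\ref{2.3b}(ii)), this reads $\int_0^t Z_{s-}\,dX_s=-\mathcal{E}(X)_tW_t=Z_t-H_t$, i.e.\ \eqref{3.1j} holds.

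For uniqueness, if $Z^1,Z^2$ are two solutions, then $D:=Z^1-Z^2$ is an admissible integrand of $X$ with $D_0=0$ and $D_t=\int_0^t D_{s-}\,dX_s$; hence $D+\mathcal{E}(X)$ is an admissible integrand solving \eqref{3.1d}, so by the uniqueness part of Proposition~\ref{3.1c} it equals $\mathcal{E}(X)$ and $D\equiv 0$. For the alternative expression, assume $H$ has the representation \eqref{3.1ja} and apply integration by parts to $N_tH_t$: since $N_0H_0=H_0$ and $W=\int_0^\cdot H_{s-}\,dN_s$, this gives $N_tH_t-W_t=H_0+\int_0^t N_{s-}\,dH_s+[N,H]_t$. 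The covariation is read off from the $dX$-coefficients $-N_{s-}$ of $N$ and $\xi_{s-}$ of $H$: its continuous part is $-\int_0^t N_{s-}\,d[H,X]^{\mathrm{c}}_s$ (because $d[H,X]^{\mathrm{c}}=\xi_{s-}\,d[X,X]^{\mathrm{c}}_s$ by Corollary~\ref{2.1z} and Proposition~\ref{2.3g}) and its jump part is $\sum_{0<s\le t}\Delta N_s\Delta H_s=-\sum_{0<s\le t}N_{s-}\tfrac{\Delta X_s\Delta H_s}{1+\Delta X_s}$; also $\int_0^t N_{s-}\,dH_s=\int_0^t N_{s-}\xi_{s-}\,dX_s+\int_0^t N_{s-}\,dA_s$. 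Since \eqref{3.1i} gives $Z_t=\mathcal{E}(X)_t(N_tH_t-W_t)$, substituting the above yields \eqref{3.1jb}.

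The main obstacle I anticipate is not conceptual but one of bookkeeping: in the integration by parts computations one must carefully separate the continuous and purely discontinuous contributions of every term---in particular the jumps of $\int H_{s-}\,dN_s$ and of $\int\mathcal{E}(X)_{s-}\,dX_s$, which are $H_{s-}\Delta N_s$ and $\mathcal{E}(X)_{s-}\Delta X_s$---and check that the spurious $d[X,X]^{\mathrm{c}}$-terms and the $\tfrac{(\Delta X)^2}{1+\Delta X}$-type jump corrections produced by \eqref{3.1gb} cancel exactly. The only genuinely structural inputs are the associativity rule (which is what legitimises all the rewritten integrals, and the very existence of $W$) and the uniqueness already established in Proposition~\ref{3.1c}.
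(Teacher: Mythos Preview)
Your proposal is correct and follows essentially the same route as the paper: verify that $Z$ solves \eqref{3.1j} by applying the integration by parts formula (Corollary~\ref{2.3s}) to the product $\mathcal{E}(X)\cdot W$ (the paper writes $Y$ for your $W$), using associativity (Theorem~\ref{2.3m}) and the representation of $1/\mathcal{E}(X)$ from Lemma~\ref{3.1ga} to make all of the cancellations explicit; and derive \eqref{3.1jb} by a second integration by parts applied to $H/\mathcal{E}(X)$. The only noteworthy difference is in the uniqueness step: you reduce to the homogeneous case by linearity---observing that $D+\mathcal{E}(X)$ solves \eqref{3.1d} and invoking Proposition~\ref{3.1c}---whereas the paper instead takes an arbitrary solution $W$ and computes $(W-H)/\mathcal{E}(X)$ directly via Corollary~\ref{2.3s}, arriving at $-\int_0^t H_{s-}\,d(1/\mathcal{E}(X))_s$ and hence $W=Z$. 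Your argument is slightly cleaner in that it reuses the uniqueness already established, at the cost of checking that the sum $D+\mathcal{E}(X)$ remains an admissible integrand; the paper's direct computation is a touch more self-contained.
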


\begin{proof}
We first prove that $Z$ is a solution of \eqref{3.1j}.
The existence of the It{\^o}-F{\"o}llmer integral $Y_t := \int_0^t H_{s-} d(\mathcal{E}(X)^{-1})_s$
follows from the assumption about $H$, Theorem~\ref{2.3m}, and Lemma~\ref{3.1ga}.
Applying Corollary~\ref{2.3s} and Proposition~\ref{2.3g} to $Y \mathcal{E}(X)$,
we get
\begin{align*}
Z_t - H_t 
& =  \int_0^t H_{s-} dX_s - \int_0^t H_{s-} d[X,X]^{\mathrm{c}}_s  - \sum_{0 < s \leq t} \frac{H_{s-}(\Delta X_s)^2}{1+ \Delta X_s}  \notag \\
& \quad - \int_0^t Y_{s-} \mathcal{E}(X)_{s-} dX_s + \int_0^t H_{s-} d[X,X]_s - \sum_{0 < s \leq t} \frac{H_{s-} (\Delta X_s)^3}{1 + \Delta X_s}  \notag \\
& = \int_0^t Z_{s-} dX_s.
\end{align*}
Hence $Z$ satisfies equation \eqref{3.1j}.

Next we show uniqueness.
Let $W$ be an arbitrary solution of \eqref{3.1j}.
Corollary~\ref{2.3s} yields
\begin{align*}
\frac{W_t-H_t}{\mathcal{E}(X)_t}
& = \int_0^t (W_{s-}- H_{s-}) d\left( \frac{1}{\mathcal{E}(X)} \right)_s + \int_0^t \frac{W_{s-}}{\mathcal{E}(X)_{s-}} dX_s   + \left[ W-H,\frac{1}{\mathcal{E}(X)} \right]_t.
\end{align*}
Using Proposition~\ref{2.3g} and Corollary~\ref{2.1z}, the quadratic covariation part is given by
\begin{align*}
\left[ W-H,\frac{1}{\mathcal{E}(X)} \right]_t 
=  - \int_0^t \frac{W_{s-}}{\mathcal{E}(X)_{s-}} d[X,X]^{\mathrm{c}}_s - \sum_{0 < s \leq t} \frac{W_{s-}}{\mathcal{E}(X)_{s-}} \frac{(\Delta X_s)^2}{1 + \Delta X_s}.
\end{align*}
Therefore,
\begin{equation*}
\frac{W_t-H_t}{\mathcal{E}(X)_t} = - \int_0^t H_{s-} d\left( \frac{1}{\mathcal{E}(X)} \right)_s.
\end{equation*}
This indicates $W = Z$ and establishes the uniqueness of solutions.

Now, suppose $H$ is represented as \eqref{3.1ja}.
By Corollary~\ref{2.3s}, Proposition~\ref{2.3g}, and Lemma~\ref{3.1ga} we have
\begin{align*}
\frac{H_t}{\mathcal{E}(X)_t} - H_0 
& = \int_0^t \frac{dH_s }{\mathcal{E}(X)_{s-}} + \int_0^t H_{s-} d\left( \frac{1}{\mathcal{E}(X)} \right)_{s} - \int_0^t \frac{d\left[H,X \right]_s}{\mathcal{E}(X)_{s-}} - \sum_{0 < s \leq t} \frac{1}{\mathcal{E}(X)_{s-}}\frac{\Delta H_s \Delta X_s}{1 + \Delta X_s}.
\end{align*}
Combining this and \eqref{3.1i}, we obtain the expression of \eqref{3.1jb}.
\end{proof}

\subsection{Nonlinear equations}

By combining Proposition~\ref{3.1h} and results in
%Duan and Yan~
\cite[Section 3]{Duan_Yan_2008}, we can solve a certain class of nonlinear equations.

\begin{Prop} \label{3.1la}
Let $X \in QV(\Pi)$ satisfy $\Delta X_t \neq -1$ for all $t$,
and let $f$ be a continuous function on $\mathbb{R}_{\geq 0} \times \mathbb{R}$.
Suppose that the integral equation
\begin{equation} \label{3.1n}
Y_t = x + \int_0^t \frac{f(s,Y_s \mathcal{E}(X)_s)}{\mathcal{E}(X)_s} ds
\end{equation}
with respect $Y$ has a unique solution.
Then, the path $Z = Y \mathcal{E}(X)$, where $Y$ denotes the unique solution of \eqref{3.1n}, is the unique solution of 
\begin{equation} \label{3.1m}
Z_t = x + \int_0^t f(s,Z_s) ds + \int_0^t Z_{s-} dX_s.
\end{equation}
\end{Prop}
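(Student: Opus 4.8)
The idea is to read \eqref{3.1m} as an inhomogeneous linear equation for $Z$ whose inhomogeneity is the finite-variation path $H^Z_t:=x+\int_0^t f(s,Z_s)\,ds$, and to compare with the closed form of Proposition~\ref{3.1h}. For the existence half I would set $Z_t:=Y_t\mathcal{E}(X)_t$, with $Y$ the unique solution of \eqref{3.1n}. Since $1/\mathcal{E}(X)\in QV(\Pi)$ by Lemma~\ref{3.1ga}, the path $\mathcal{E}(X)$ stays bounded away from $0$ on compact intervals, so the integrand in \eqref{3.1n} is {\cadlag} and locally bounded; hence $Y$ is continuous and in $FV_{\mathrm{loc}}$, and $Z$ is {\cadlag}. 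The path $Z$ is moreover an admissible integrand of $X$: one multiplies the $C^{1,2}$-representation of $\mathcal{E}(X)$ used in the proof of Proposition~\ref{3.1c} by the $FV_{\mathrm{loc}}$ path $Y$ and applies Remark~\ref{2.3b}(iii). Consequently $H^Z$ is a continuous path in $FV_{\mathrm{loc}}$ and an admissible integrand of $X$ of the form \eqref{3.1ja} with $\xi\equiv0$ and $A=H^Z$ (continuity of $f$ together with right continuity of $Z$ gives the local boundedness of $s\mapsto f(s,Z_s)$ needed here).

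Now Proposition~\ref{3.1h} applies with this choice of $H$: the unique solution of $W_t=H^Z_t+\int_0^tW_{s-}\,dX_s$ is given by \eqref{3.1jb}. As $H^Z$ is continuous of locally finite variation, $\Delta H^Z\equiv 0$ and $[H^Z,X]^{\mathrm c}\equiv 0$ (Corollary~\ref{2.1z}), so \eqref{3.1jb} collapses to $W_t=\mathcal{E}(X)_t\bigl(x+\int_0^t \mathcal{E}(X)_{s-}^{-1}f(s,Z_s)\,ds\bigr)=\mathcal{E}(X)_t\bigl(x+\int_0^t \mathcal{E}(X)_{s}^{-1}f(s,Z_s)\,ds\bigr)$, the second equality because $\mathcal{E}(X)_{s-}=\mathcal{E}(X)_s$ for Lebesgue-almost every $s$. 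Since $Z_s=Y_s\mathcal{E}(X)_s$ and $Y$ solves \eqref{3.1n}, the bracket equals $Y_t$, so $W=Y\mathcal{E}(X)=Z$; that is, $Z$ satisfies \eqref{3.1m}. For uniqueness I would take an arbitrary solution $Z'$ of \eqref{3.1m}; then $Z'$ is an admissible integrand of $X$, the same reasoning makes $H^{Z'}_t:=x+\int_0^tf(s,Z'_s)\,ds$ an admissible integrand of $X$ of the form \eqref{3.1ja}, and Proposition~\ref{3.1h} forces $Z'$ to equal \eqref{3.1jb} with $H=H^{Z'}$, which simplifies exactly as above to $Z'_t=\mathcal{E}(X)_t\bigl(x+\int_0^t\mathcal{E}(X)_s^{-1}f(s,Z'_s)\,ds\bigr)$. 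Hence $\widetilde Y:=Z'/\mathcal{E}(X)$ is continuous and satisfies $\widetilde Y_t=x+\int_0^t\mathcal{E}(X)_s^{-1}f(s,\widetilde Y_s\mathcal{E}(X)_s)\,ds$, i.e.\ $\widetilde Y$ solves \eqref{3.1n}; by the standing uniqueness hypothesis $\widetilde Y=Y$, whence $Z'=Z$.

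The only genuine obstacle is structural: checking that $Y$, $H^Z$ and $Z$ are indeed {\cadlag}, of locally finite variation where claimed, and admissible integrands of $X$, so that Proposition~\ref{3.1h} and Remark~\ref{2.3b}(iii) genuinely apply; and the harmless-but-necessary replacement of $\mathcal{E}(X)_{s-}$ by $\mathcal{E}(X)_s$ inside the Lebesgue integrals, justified by $\mathcal{E}(X)$ having at most countably many jumps. Everything else — notably the vanishing of the jump term and the $[\,\cdot\,,\cdot\,]^{\mathrm c}$ term in \eqref{3.1jb} for a continuous finite-variation inhomogeneity — is immediate.
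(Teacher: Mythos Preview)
Your proof is correct. The uniqueness half is essentially identical to the paper's argument: both apply Proposition~\ref{3.1h} to the inhomogeneity $H_t=x+\int_0^t f(s,Z'_s)\,ds$, reduce the closed form~\eqref{3.1jb} using the continuity and finite-variation of $H$, and conclude that $Z'/\mathcal{E}(X)$ solves~\eqref{3.1n}. The existence half differs: the paper simply applies the integration-by-parts formula (Corollary~\ref{2.3s}) to the product $Y_t\mathcal{E}(X)_t$, using that $Y$ is continuous of finite variation (so the bracket term vanishes) and that $d\mathcal{E}(X)=\mathcal{E}(X)_{-}\,dX$, to read off~\eqref{3.1m} directly in two lines. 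Your route---computing the closed-form solution~\eqref{3.1jb} of the linear equation with inhomogeneity $H^Z$ and verifying it equals $Z$---is slightly more roundabout but has the virtue of using a single tool (Proposition~\ref{3.1h}) symmetrically for both halves. The structural checks you flag (admissibility of $Z$ and $H^Z$, and replacing $\mathcal{E}(X)_{s-}$ by $\mathcal{E}(X)_s$ under $ds$) are exactly the ones needed, and your justifications for them are correct.
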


Here, the terminology ``solution of \eqref{3.1m}'' means
an admissible integrand of $X$ that satisfies \eqref{3.1m}.

\begin{proof}
We see that, by Corollary~\ref{2.3s},
\begin{align*}
Y_t \mathcal{E}(X)_t
& = \int_0^t \mathcal{E}(X)_{s} \frac{f(s,Y_s \mathcal{E}(X)_s)}{\mathcal{E}(X)_s} ds + \int_0^t Y_{s} \mathcal{E}(X)_{s-} dX_s  \notag \\
& = \int_0^t f(s,Y_s \mathcal{E}(X)_s ) ds + \int_0^t Y_{s-} \mathcal{E}(X)_{s-} dX_s.
\end{align*}
Therefore, $Y \mathcal{E}(X)$ is a solution of \eqref{3.1m}.
%Note here that $Y$ is continuous.
For uniqueness, let $Z$ be an arbitrary solution of \eqref{3.1m} and let
\begin{equation*}
H_t = x + \int_0^t f(s,Z_s) ds.
\end{equation*}
Then, by Proposition~\ref{3.1h}, $Z$ satisfies
\begin{equation*}
Z_t = \mathcal{E}(X)_t \left( x + \int_0^t \frac{f(s,Z_s)}{\mathcal{E}(X)_{s}} ds \right).
\end{equation*}
Hence the path $W := Z/\mathcal{E}(X)$ satisfies \eqref{3.1n}.
By the uniqueness of solutions of \eqref{3.1n}, we have
$Z = W \mathcal{E}(X) = Y \mathcal{E}(X)$.
\end{proof}

\begin{Rem}
Equation \eqref{3.1n} has a unique solution if, for example, $f$ satisfies the following conditions.
\begin{itemize}
\item (Local Lipschitz condition) For each $T, M > 0$, there is a constant $L_{T,M} > 0$ such that
for all $x,y \in [-M,M]$ and all $t \in [0,T]$,
\begin{equation*}
\lvert f(t,x) - f(t,y) \rvert \leq L_{T,M} \lvert x-y \rvert.
\end{equation*}
\item (Linear growth condition) For each $T > 0$, there is a constant $K_T > 0$ such that 
for all $(t,x) \in [0,T] \times \mathbb{R}$,
\begin{equation*}
\lvert f(t,x) \rvert \leq K_T (1 + \lvert x \rvert).
\end{equation*}
\end{itemize}
Under these assumptions,
$g(t,y) = f(t, y \,\mathcal{E}(X)_t )/\mathcal{E}(X)_t$
again satisfies both local Lipschitz and linear growth conditions.
Indeed, fix $T > 0$ and $M > 0$ and define $M' = \sup_{t \in [0,T]} \lvert \mathcal{E}(X)_t \rvert M$.
Then $g$ satisfies
\begin{equation*}
\lvert g(t,x) - g(t,y) \rvert
= L_{T,M'} \lvert y-x \rvert.
\end{equation*}
Moreover, it satisfies
\begin{equation*}
\lvert g(t,y) \rvert
\leq K_T \left( \frac{1}{\inf_{t \in [0,T]} \lvert \mathcal{E}(X)_t \rvert} + 1 \right)\left( 1 + \lvert y \rvert \right).
\end{equation*}
By the assumption that $\Delta X_t \neq -1$, Remark~\ref{3.1g} and the {\cadlag} property of $\mathcal{E}(X)$, 
we see that $\inf_{[0,T]} \mathcal{E}(X)_{t} > 0$.
Hence, $g$ satisfies the linear growth condition.
\end{Rem}

\begin{Rem} \label{3.1o}
There is another class of nonlinear equations that is solvable within our framework.
%Mishura and Schied~
\cite{Mishura_Schied_2016} discuss an extension of the so called Doss-Sussmann method within the framework of F{\"o}llmer's calculus for continuous paths.
The Doss-Sussmann method is a way to solve SDEs by using the solution of ODEs~\cite{Doss_1977,Sussmann_1978}.
\end{Rem}

\subsection{Drawdown equations}

In this section, we deal with integral equations which are called
drawdown equations.
We follow
%Carraro, El Karoui, and Ob{\l}{\'{o}}j~
\cite[Section 2--3]{Carraro_ElKaroui_Obloj_2012},
which studies Az{\'e}ma-Yor processes and related drawdown equations.
We interpret their results in our pathwise setting.

Given an $\mathbb{R}$-valued {\cadlag} path $X$,
we define its running maximum, denoted by $\overline{X}$, through the formula
$\overline{X}_t = \sup_{0 \leq s \leq t} X_s$ for all $t \in \mathbb{R}_{\geq 0}$.
Then the path $\overline{X}$ is {\cadlag} and increasing.
In this subsection, we always assume that $\overline{X}$ is continuous.
Recall that this continuity assumption implies
\begin{equation} \label{3.3a}
\int_{0}^{t} (\overline{X}_s - X_s) d\overline{X}_s = 0, \quad t \in \mathbb{R}_{\geq 0}.
\end{equation}

\begin{Prop} \label{3.3b}
Suppose that $X \in QV(\Pi)$ satisfies $X_0 = a$ and that the running maximum $\overline{X}$ is continuous.
Let $U: [a,\infty\mathclose{[} \to \mathbb{R}$ be a $C^2$ function such that $U(a) = a^*$.
(We consider only right derivatives at $a$.)
Then the function
\begin{equation} \label{3.3c}
M^U_t(X) := U(\overline{X}_t) - U'(\overline{X}_t)(\overline{X}_t -X_t) 
\end{equation}
has quadratic variation along $\Pi$ and satisfies 
\begin{equation} \label{3.3d}
M^U_t(X) = a^* + \int_0^t U'(\overline{X}_s) dX_s.
\end{equation}
\end{Prop}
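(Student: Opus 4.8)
The plan is to write $M^U_t(X) = f(\overline{X}_t,X_t)$ for $f(m,x) := U(m) - U'(m)(m-x)$, to treat $\overline{X}$ as the finite-variation component and $X$ as the $QV(\Pi)$ component, to apply the It\^o--F\"ollmer formula (Theorem~\ref{2.2c}), and then to use that $\overline{X}$ is continuous and increases only on the set $\{s : X_s = \overline{X}_s\}$ in order to discard every term except the It\^o--F\"ollmer integral $\int_0^t U'(\overline{X}_s)\,dX_s$.

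First I would extend $U$ to a $C^2$ function on all of $\mathbb{R}$ (for instance by attaching its second-order Taylor polynomial at $a$ on the half-line below $a$), so that $f \in C^{1,2}(\mathbb{R}\times\mathbb{R})$; since $M^U_t(X)$ depends only on the restriction $U|_{[a,\overline{X}_t]}$, the choice of extension is irrelevant. A direct computation gives $\partial f/\partial x(m,x) = U'(m)$, $\partial^2 f/\partial x^2 \equiv 0$, and $\partial f/\partial m(m,x) = -U''(m)(m-x)$. Because $\overline{X}$ is continuous and increasing, $\overline{X} \in FV_{\mathrm{loc}}$ with $(\overline{X})^{\mathrm{c}} = \overline{X}$, $\overline{X}_{s-} = \overline{X}_s$, and $\Delta\overline{X}_s = 0$; consequently $\Delta f(\overline{X}_s,X_s) = U'(\overline{X}_s)\Delta X_s = (\partial f/\partial x)(\overline{X}_{s-},X_{s-})\Delta X_s$, so the jump sum in \eqref{2.2d} vanishes identically. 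Applying Theorem~\ref{2.2c} with $A = \overline{X}$ (so $m=1$), using $\partial^2 f/\partial x^2 \equiv 0$, and evaluating $f(\overline{X}_0,X_0) = f(a,a) = U(a) = a^*$, one is left with
\[
M^U_t(X) - a^* = -\int_0^t U''(\overline{X}_s)\,(\overline{X}_s - X_{s-})\,d\overline{X}_s + \int_0^t U'(\overline{X}_s)\,dX_s .
\]

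The hard part — essentially the only non-routine step — will be showing that the Lebesgue--Stieltjes integral on the right is zero. Since $\overline{X}$ is continuous, the measure $d\overline{X}$ has no atoms, so the countable set of jump times of $X$ is $d\overline{X}$-null and $X_{s-}$ may be replaced by $X_s$ in that integral. Identity \eqref{3.3a} states $\int_0^t (\overline{X}_s - X_s)\,d\overline{X}_s = 0$ with non-negative integrand, which forces $(\overline{X}_s - X_s)\,d\overline{X}_s$ to be the null measure on $[0,t]$; hence $\int_0^t U''(\overline{X}_s)(\overline{X}_s - X_s)\,d\overline{X}_s = 0$, and \eqref{3.3d} follows (the It\^o--F\"ollmer integral $\int_0^t U'(\overline{X}_{s-})\,dX_s$ delivered by Theorem~\ref{2.2c} coincides with $\int_0^t U'(\overline{X}_s)\,dX_s$ by continuity of $\overline{X}$). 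Finally, $s \mapsto U'(\overline{X}_s) = g(\overline{X}_s,X_s)$ with $g(m,x) = U'(m) \in C^1(\mathbb{R}\times\mathbb{R})$ and $\overline{X} \in FV_{\mathrm{loc}}$, so it is an admissible integrand of $X$ by Remark~\ref{2.3b}~(iii); therefore Proposition~\ref{2.3f} shows that $M^U(X) = a^* + \int_0^{\cdot} U'(\overline{X}_s)\,dX_s$ has quadratic variation along $\Pi$, which completes the proof.
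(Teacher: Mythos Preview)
Your proof is correct and follows essentially the same route as the paper: define $f(m,x)=U(m)-U'(m)(m-x)$, apply the It\^o--F\"ollmer formula (Theorem~\ref{2.2c}) with $A=\overline{X}$, and kill the $d\overline{X}$-integral via \eqref{3.3a}. The only cosmetic difference is that the paper obtains the quadratic variation of $M^U(X)$ directly from Proposition~\ref{2.1m} applied to $f(\overline{X},X)$, whereas you first derive the integral representation \eqref{3.3d} and then invoke Proposition~\ref{2.3f}; both are valid, and your extra care with the extension of $U$ and the jump accounting is fine but not strictly needed given Remark~\ref{2.2fa}.
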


\begin{proof}
The function $f(a,x) = U(a) - U'(a)(a-x)$ is clearly belongs to $C^{1,2}$-class.
The existence of the quadratic variation of $M^U(X) = f(\overline{X},X)$ follows from 
Proposition~\ref{2.1m}.
Applying the It{\^o}-F{\"o}llmer formula to $f(\overline{X},X)$, we obtain 
\begin{align*}
f(\overline{X}_t,X_t) - a^*
= - \int_{0}^{t} U''(\overline{X}_s) (\overline{X}_s - X_s)d\overline{X}_s + \int_0^t U'(\overline{X}_s) dX_s.
\end{align*}
By \eqref{3.3a} we see that 
\begin{equation*}
\int_{0}^{t} \left\lvert U''(\overline{X}_s) (\overline{X}_s - X_s) \right\rvert d\overline{X}_s \leq \sup_{s \in [0,t]}\lvert U''(\overline{X}_s) \rvert  \int_{0}^{t} (\overline{X}_s - X_s)d\overline{X}_s = 0.
\end{equation*}
Hence
\begin{equation*}
M^U_t(X) = f(\overline{X}_t,X_t) = a^* + \int_0^t U'(\overline{X}_s) dX_s.
\end{equation*}
holds for all $t \in \mathbb{R}_{\geq 0}$.
\end{proof}

\begin{Def} \label{3.3e}
We call the function $M^U(X)$ defined in Proposition~\ref{3.3b}
\emph{the Az{\'e}ma-Yor path associated with $U$ and $X$}.
\end{Def}

The following proposition, which was originally proved by
%Carraro, El Karoui, and Ob{\l}{\'{o}}j~
\cite{Carraro_ElKaroui_Obloj_2012},
is still valid within the framework of It{\^o}-F{\"o}llmer integration.
It is easy to see that the proof of \cite[Proposition 2.2]{Carraro_ElKaroui_Obloj_2012}
is pathwise and does not use It{\^o} calculus.
So we can replace the word ``max-continuous semimartingale'' with ``path of $QV(\Pi)$ with continuous running maximum.''

\begin{Prop}[ {\cite[Proposition 2.2]{Carraro_ElKaroui_Obloj_2012}} ] \label{3.3f}
%Carraro, El Karoui, and Ob{\l}{\'{o}}j~
\begin{enumerate}
\item Let $X$, $U$ satisfy the same assumptions as in Proposition~\ref{3.3b}. Moreover, we suppose that $U$ is increasing. Then $M^U(X)$ still has a continuous running maximum and satisfies
$\overline{M^U_t(X)} = U(\overline{X}_t)$ for all $t \in \mathbb{R}_{\geq 0}$.
\item In addition to the assumptions of (i), let $F$ be an increasing $C^{2}$ function such that $U \circ F$ is well defined. Then,
$M^U_t(M^F(X)) = M_t^{U \circ F}(X)$.
\end{enumerate}
\end{Prop}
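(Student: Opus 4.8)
The plan is to follow the argument of \cite[Proposition 2.2]{Carraro_ElKaroui_Obloj_2012}, simply checking that every step is purely pathwise (no stochastic integration is used there, only the deterministic structure of the running maximum and the It{\^o}--F{\"o}llmer formula of Proposition~\ref{3.3b}). Part (i) is the substantive part; once it is in hand, part (ii) is a one-line algebraic identity combined with the chain rule.

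For part (i) I would first record the easy inequality $\overline{M^U_t(X)} \le U(\overline{X}_t)$: since $U$ is increasing we have $U' \ge 0$, and since $\overline{X}_s \ge X_s$ this gives $M^U_s(X) = U(\overline{X}_s) - U'(\overline{X}_s)(\overline{X}_s - X_s) \le U(\overline{X}_s) \le U(\overline{X}_t)$ for every $s \le t$. For the reverse inequality I would exploit \eqref{3.3a}: because $\overline{X}_s - X_s \ge 0$ and $\int_0^t (\overline{X}_s - X_s)\,d\overline{X}_s = 0$, we have $X_s = \overline{X}_s$ for $d\overline{X}$-almost every $s \in [0,t]$. Put $L = \sup\{\overline{X}_s : s \le t,\ X_s = \overline{X}_s\}$ (the set is nonempty since $X_0 = \overline{X}_0 = a$). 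I claim $L = \overline{X}_t$: if not, continuity and monotonicity of $\overline{X}$ produce $\tau < t$ with $\overline{X}_\tau = L$ and $\{s \le t : \overline{X}_s \le L\} = [0,\tau]$, so $d\overline{X}\big((\tau,t]\big) = \overline{X}_t - L > 0$ and there exists $s \in (\tau,t]$ with $X_s = \overline{X}_s > L$, contradicting the definition of $L$. Choosing $s_n \le t$ with $X_{s_n} = \overline{X}_{s_n}$ and $\overline{X}_{s_n} \to \overline{X}_t$, we get $M^U_{s_n}(X) = U(\overline{X}_{s_n})$, hence $\overline{M^U_t(X)} \ge \sup_n U(\overline{X}_{s_n}) = U(\overline{X}_t)$ by continuity of $U$. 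Thus $\overline{M^U_t(X)} = U(\overline{X}_t)$, and since $U$ and $\overline{X}$ are continuous, $t \mapsto \overline{M^U_t(X)}$ is continuous as well.

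For part (ii), set $N := M^F(X)$. By Proposition~\ref{3.3b} applied to $X$, $N \in QV(\Pi)$ with $N_0 = F(a)$; by part (i) (using that $F$ is increasing) $N$ has continuous running maximum with $\overline{N}_t = F(\overline{X}_t)$; and $U\circ F$ is $C^2$, increasing, with $(U\circ F)(a) = U(F(a)) = N_0^{*}$, so Proposition~\ref{3.3b} and part (i) legitimately apply to both $M^U(N)$ and $M^{U\circ F}(X)$. Since $N_t = F(\overline{X}_t) - F'(\overline{X}_t)(\overline{X}_t - X_t)$, we have $\overline{N}_t - N_t = F'(\overline{X}_t)(\overline{X}_t - X_t)$, and therefore
\begin{align*}
M^U_t(N) &= U(\overline{N}_t) - U'(\overline{N}_t)(\overline{N}_t - N_t) \\
&= U\big(F(\overline{X}_t)\big) - U'\big(F(\overline{X}_t)\big)\,F'(\overline{X}_t)\,(\overline{X}_t - X_t) \\
&= (U\circ F)(\overline{X}_t) - (U\circ F)'(\overline{X}_t)\,(\overline{X}_t - X_t) = M^{U\circ F}_t(X),
\end{align*}
where the third equality is the chain rule $(U\circ F)' = (U'\circ F)\,F'$.

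The main obstacle will be the reverse inequality in part (i): one has to make sure that the running maximum of the c\`adl\`ag path $M^U(X)$ actually attains, or at least approaches along a sequence of times, the value $U(\overline{X}_t)$. This is exactly where the max-continuity hypothesis on $\overline{X}$ and the identity \eqref{3.3a} enter; without them the level-set argument above collapses and $\overline{M^U(X)}$ need no longer equal $U\circ\overline{X}$. Everything else reduces to monotonicity, the chain rule, and invoking Proposition~\ref{3.3b}.
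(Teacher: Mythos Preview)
Your proposal is correct and follows exactly the approach the paper indicates: the paper does not give its own proof but simply observes that the argument of \cite[Proposition 2.2]{Carraro_ElKaroui_Obloj_2012} is purely pathwise and carries over verbatim, and what you have written out is precisely that argument (the support identity from \eqref{3.3a} for the reverse inequality in (i), and the chain-rule computation for (ii)). There is nothing to add.
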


The following is a direct consequence of Propositions \ref{3.3f} and \ref{3.3b}.

\begin{Cor}[ {\cite[Corollary 2.4]{Carraro_ElKaroui_Obloj_2012}} ] \label{3.3g}
%Carraro, El Karoui, and Ob{\l}{\'{o}}j~
Consider a strictly increasing $C^2$ function $U$ defined on $[a,\infty[$ such that $U(a) = a^*$.
Let $V: [a^*,U(\infty)\mathclose{[} \to [a,\infty[$ denotes the inverse of $U$
where $U(\infty) = \lim_{x \to \infty} U(x)$.
Moreover, suppose that $X \in QV(\Pi)$ has continuous running maximum and satisfies $X_0 = a$.
Then, we have $X_{t} = M^V_{t}(M^U(X))$.
If we define $Y_t = M^U_t(X)$, $X$ and $Y$ are expressed as follows.
\begin{equation*}
Y_t = a^* + \int_0^t U'(\overline{X}_s)dX_s,\quad X_t = a + \int_0^t V'(\overline{Y}_t) dY_t, \quad t \in \mathbb{R}_{\geq 0}.
\end{equation*}
\end{Cor}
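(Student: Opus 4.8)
The plan is to run the pair $(X,U)$ through Proposition~\ref{3.3b} and Proposition~\ref{3.3f}, and then run the resulting Az\'ema--Yor path $Y := M^U(X)$ through the same two results with $U$ replaced by its inverse $V$; everything else is formal. First I would apply Proposition~\ref{3.3b} to $(X,U)$: its hypotheses hold since $U$ is $C^2$ and strictly increasing on $[a,\infty[$ with $U(a)=a^*$, and $X\in QV(\Pi)$ has continuous running maximum with $X_0=a$. This gives $Y:=M^U(X)\in QV(\Pi)$; and because $X_0=\overline{X}_0=a$ forces $Y_0=U(a)-0=a^*$ in~\eqref{3.3c}, the conclusion~\eqref{3.3d} reads $Y_t=a^*+\int_0^t U'(\overline{X}_s)\,dX_s$, which is the first displayed identity. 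Applying Proposition~\ref{3.3f}(i) to the same $(X,U)$ (legitimate, as $U$ is in particular increasing) then shows $\overline{Y}$ is continuous and $\overline{Y}_t=U(\overline{X}_t)$ for all $t$.

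Next I would apply Proposition~\ref{3.3b} a second time, now to $(Y,V)$. Here $V$ is the inverse of the strictly increasing continuous bijection $U\colon[a,\infty[\to[a^*,U(\infty)[$; it is strictly increasing with $V(a^*)=a$, and of class $C^2$ because $U'>0$ (implicit in the statement, since $V'$ appears in the conclusion). By the previous paragraph $Y\in QV(\Pi)$ has continuous running maximum, $Y_0=a^*$, and $\overline{Y}$ takes values in $U([a,\infty[)=[a^*,U(\infty)[$, the domain of $V$; so Proposition~\ref{3.3b} applies and yields $M^V(Y)\in QV(\Pi)$ with
\[
M^V_t(Y) = V(a^*) + \int_0^t V'(\overline{Y}_s)\,dY_s = a + \int_0^t V'(\overline{Y}_s)\,dY_s .
\]

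It then remains to identify $M^V_t(Y)$ with $X_t$. The quickest route is Proposition~\ref{3.3f}(ii) with inner function $U$ and outer function $V$ (both increasing $C^2$, $V\circ U$ well defined on $[a,\infty[$ and equal to the identity): it gives $M^V_t(M^U(X)) = M^{V\circ U}_t(X) = M^{\mathrm{id}}_t(X)$, and evaluating the definition~\eqref{3.3c} at the identity map gives $M^{\mathrm{id}}_t(X) = \overline{X}_t - (\overline{X}_t - X_t) = X_t$. Combined with the display above this is both the claim $X_t = M^V_t(M^U(X))$ and the second identity $X_t = a + \int_0^t V'(\overline{Y}_s)\,dY_s$. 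If one prefers to sidestep any ambiguity in how Proposition~\ref{3.3f}(ii) is phrased, one can instead verify $M^V_t(Y)=X_t$ by direct substitution: using $\overline{Y}_t=U(\overline{X}_t)$, $V(U(\overline{X}_t))=\overline{X}_t$, $V'(U(\overline{X}_t))=1/U'(\overline{X}_t)$, and $U(\overline{X}_t)-Y_t=U'(\overline{X}_t)(\overline{X}_t-X_t)$ in~\eqref{3.3c} collapses $M^V_t(Y)$ to $X_t$ at once.

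I expect the argument to be essentially formal, so there is no real "hard part"; the only points that need a word of justification are that $V$ inherits $C^2$-regularity from $U$ (which needs $U'>0$) and that $Y=M^U(X)$ genuinely stays inside the domain $[a^*,U(\infty)[$ of $V$ with a continuous running maximum — precisely what Proposition~\ref{3.3f}(i) supplies. Beyond these remarks there is nothing to estimate, which is why the corollary is genuinely a direct consequence of Propositions~\ref{3.3b} and~\ref{3.3f}.
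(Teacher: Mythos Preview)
Your proposal is correct and follows exactly the route the paper intends: the paper does not write out a proof for this corollary at all, merely recording it as a direct consequence of Propositions~\ref{3.3b} and~\ref{3.3f}, and your argument is precisely the unpacking of that remark. Your two-step application of Proposition~\ref{3.3b} (first to $(X,U)$, then to $(Y,V)$) together with Proposition~\ref{3.3f}(i) to transfer the continuous-running-maximum property and Proposition~\ref{3.3f}(ii) with $V\circ U=\mathrm{id}$ to close the loop is the intended derivation.
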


\begin{Def} \label{3.3h}
Let $X$ be a real-valued {\cadlag} path and 
$w$ be a real-valued function defined on some subset of
$\mathbb{R}$ including $[X_0,\infty[$.
We say that \emph{$X$ satisfies the $w$-drawdown constraint} if 
$X_t \wedge X_{t-} > w(\overline{X}_t)$
holds for all $t \in \mathbb{R}_{\geq 0}$.
\end{Def}

Let us consider the Az{\'e}ma-Yor path $M^U(X)$ associated
with a strictly positive valued $X$ and a strictly increasing $U$.
We can construct a function $w$ such that $M^U(X)$ satisfies the $w$-drawdown constraint.
Indeed, let $V = U^{-1}$, $h(x) = U(x) - xU' (x)$, and $w= h \circ V$.
Then we have
\begin{align*}
& M^U_t(X) \wedge M^U_{t-}(X) = U(\overline{X}_t) - U'(\overline{X}_t)\overline{X}_t + U'(\overline{X}_t) (X_t \wedge X_{t-})  \\
& > U(\overline{X}_t) - U'(\overline{X}_t)\overline{X}_t = h(V(U(\overline{X}_t))) = w(\overline{M^U_t(X)}).
\end{align*}
Conversely, given a ``nice'' function $w$, we can construct a {\cadlag} path that satisfies
the $w$-drawdown constraint as follows.

\begin{Prop} \label{3.3i}
\begin{enumerate}
\item Let $X \in QV(\Pi)$ satisfy $X_0 = a$ and $X_t \wedge X_{t-} > 0$.
Suppose that its running maximum is continuous. 
We consider a function $w \colon [a^*,\infty\mathclose{[} \to \mathbb{R}$ of $C^1$-class
satisfying $y - w(y) > 0$ for all $y \in [a^*,\infty[$.
Moreover, we define
\begin{equation*}
V(y) = a \exp\left(\int_{[a^*,y]} \frac{1}{s-w(s)} ds \right), \quad y \in [a^*,\infty[
\end{equation*}
and $U = V^{-1}$.
Then $M^U(X)$ is the unique solution of
\begin{equation} \label{3.3j}
Y_t = a^* + \int_0^t \frac{Y_{s-} - w(\overline{Y}_s)}{X_{s-}} dX_s
\end{equation}
that satisfies the $w$-drawdown constraint and has continuous running maximum.

\item
Let $Y \in QV(\Pi)$. 
Suppose that $Y$ satisfies the $w$-drawdown constraint, $Y_0 = a^*$, and its running maximum is continuous.
Then, $X_t = M_t^V(Y)$ is the unique {\cadlag} path
of $QV(\Pi)$ such that $\overline{X}$ is continuous, $X_0 = a$ and
$X$ satisfies \eqref{3.3j}.
\end{enumerate}
\end{Prop}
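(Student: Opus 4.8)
The plan is to reduce both assertions to the Azéma–Yor calculus of Propositions~\ref{3.3b} and~\ref{3.3f} together with the uniqueness theory for homogeneous linear equations (Proposition~\ref{3.1c}). First set up the functions. Since $w\in C^1$ and $s-w(s)>0$ on $[a^*,\infty\mathclose{[}$, the map $s\mapsto 1/(s-w(s))$ is positive and $C^1$, so $V$ is strictly increasing and of $C^2$-class, with $V(a^*)=a$ and $V'(y)(y-w(y))=V(y)$. Hence $U=V^{-1}$ is strictly increasing and $C^2$ on the range of $V$ (which I take to contain the range of $\overline X$), with $U(a)=a^*$ and the dual relation $xU'(x)=U(x)-w(U(x))$. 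Writing $h(x)=U(x)-xU'(x)$, these identities say exactly that $w=h\circ V$, i.e. $w(U(x))=h(x)$ on $[a,\infty\mathclose{[}$. I will also use that $M^U\circ M^V$ and $M^V\circ M^U$ reduce to the identity by Proposition~\ref{3.3f}(ii), since $U\circ V$ and $V\circ U$ are identity maps and $M^{\mathrm{id}}_t(Z)=\overline Z_t-(\overline Z_t-Z_t)=Z_t$.

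For existence in part (i), apply Proposition~\ref{3.3b} with $f(a,x)=U(a)-U'(a)(a-x)$: the path $Y:=M^U(X)$ has quadratic variation along $\Pi$ and $Y_t=a^*+\int_0^tU'(\overline X_s)\,dX_s$, and by Proposition~\ref{3.3f}(i) it has continuous running maximum $\overline Y_t=U(\overline X_t)$. Using $Y_{s-}=U(\overline X_s)-U'(\overline X_s)(\overline X_s-X_{s-})$ (continuity of $\overline X$) together with $w(\overline Y_s)=w(U(\overline X_s))=h(\overline X_s)=U(\overline X_s)-\overline X_sU'(\overline X_s)$, one computes $Y_{s-}-w(\overline Y_s)=X_{s-}U'(\overline X_s)$; since $X_{s-}>0$, the integrand in~\eqref{3.3j} equals $U'(\overline X_s)$, so $Y$ solves~\eqref{3.3j}. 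The $w$-drawdown constraint is the computation preceding the statement: $Y_t\wedge Y_{t-}=h(\overline X_t)+U'(\overline X_t)(X_t\wedge X_{t-})>h(\overline X_t)=w(\overline Y_t)$ because $U'>0$ and $X_t\wedge X_{t-}>0$. Finally $Y=f(\overline X,X)$ is an admissible integrand of $X$ by Remark~\ref{2.3b}.

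The crux is uniqueness in part (i). Let $Y$ be any solution of~\eqref{3.3j}; by continuity of $\overline Y$ we may write $Y_t=a^*+\int_0^t\zeta_{s-}\,dX_s$ with $\zeta_s=(Y_s-w(\overline Y_s))/X_s$. By Proposition~\ref{3.3b}, $M^V_t(Y)=a+\int_0^tV'(\overline Y_s)\,dY_s$, and by the associativity theorem (Theorem~\ref{2.3m}) this equals $a+\int_0^tV'(\overline Y_s)\zeta_{s-}\,dX_s$. The key algebraic step is that, from $V'(\overline Y_s)=V(\overline Y_s)/(\overline Y_s-w(\overline Y_s))$ and $M^V_{s-}(Y)=V(\overline Y_s)-V'(\overline Y_s)(\overline Y_s-Y_{s-})$, one obtains $V(\overline Y_s)(Y_{s-}-w(\overline Y_s))=M^V_{s-}(Y)(\overline Y_s-w(\overline Y_s))$, hence $V'(\overline Y_s)\zeta_{s-}=M^V_{s-}(Y)/X_{s-}$, so that
\begin{equation*}
M^V_t(Y)=a+\int_0^t\frac{M^V_{s-}(Y)}{X_{s-}}\,dX_s,\qquad X_t=a+\int_0^t\frac{X_{s-}}{X_{s-}}\,dX_s .
\end{equation*}
Thus $D:=M^V(Y)-X$ satisfies $D_0=0$ and $D_t=\int_0^t D_{s-}/X_{s-}\,dX_s=\int_0^tD_{s-}\,d\widetilde X_s$ with $\widetilde X_t:=\int_0^t dX_s/X_{s-}\in QV(\Pi)$ (Proposition~\ref{2.3f}); since $D+\mathcal E(\widetilde X)$ then solves the equation of Proposition~\ref{3.1c}, its uniqueness forces $D\equiv0$, i.e. $M^V(Y)=X$. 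Applying $M^U$ and the composition rule gives $Y=M^U(M^V(Y))=M^U(X)$, which proves uniqueness.

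Part (ii) follows by symmetry: for $Y$ as in (ii), the path $X:=M^V(Y)$ is strictly positive, since $Y_t,Y_{t-}>w(\overline Y_t)$ force $V'(\overline Y_t)(\overline Y_t-Y_t)<V(\overline Y_t)$ and likewise at $t-$; by Proposition~\ref{3.3b} and~\ref{3.3f}(i), $X$ has continuous running maximum $\overline X=V(\overline Y)$ and $X_0=a$; and by part~(i) applied to this $X$ the path $M^U(X)=M^U(M^V(Y))=Y$ solves~\eqref{3.3j}. Uniqueness of $X$ follows by applying the uniqueness in part~(i) with integrator any competitor $X'$ (which, like $X$, is forced to be strictly positive), giving $Y=M^U(X')$ and hence $X'=M^V(Y)=X$. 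I expect the genuine obstacle to be the uniqueness in part~(i): one must recognise that the right object is $M^V(Y)$ — comparing $Y$ directly with $M^U(X)$ leaves an uncontrolled running-maximum term — and then carry out the algebraic identity collapsing the drawdown equation into the homogeneous linear equation $D=\int D_{-}\,d\widetilde X$. The remaining points (that $U$ is defined on the range of $\overline X$, that $\widetilde X\in QV(\Pi)$ and the relevant integrands are admissible) are routine.
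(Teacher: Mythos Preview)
Your argument is correct and follows essentially the same route as the paper. Both proofs establish existence in~(i) via the identity $Y_{s-}-w(\overline Y_s)=U'(\overline X_s)X_{s-}$, and both obtain uniqueness by showing $M^V(Y)=X$ and then applying the composition rule $M^U\!\circ M^V=\mathrm{id}$ (Corollary~\ref{3.3g}).

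The only difference is organizational. The paper isolates the reduction to Proposition~\ref{3.1c} as a separate Lemma~\ref{3.3k}: it computes
\[
\int_0^t\frac{dX_s}{X_{s-}}=\int_0^t\frac{dY_s}{Y_{s-}-w(\overline Y_s)}=\int_0^t\frac{dM^V_s(Y)}{M^V_{s-}(Y)}
\]
(the ``stochastic logarithms'' coincide) and then invokes the lemma. You instead expand $M^V(Y)$ directly as an It\^o--F\"ollmer integral against $X$, identify the integrand via the same algebraic identity, and apply Proposition~\ref{3.1c} to the difference $D=M^V(Y)-X$ in one step. The content is identical; your packaging is slightly more direct, while the paper's lemma makes the logarithmic-derivative structure explicit and reusable. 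For part~(ii), the paper verifies existence by a direct associativity computation, whereas you recycle part~(i); again the same admissibility checks underlie both.
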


The integral equation \eqref{3.3j} is called a drawdown equation.
The terminology ``solution of \eqref{3.3j}'' means an admissible integrand of $X$ that satisfies equation \eqref{3.3j}.
To prove uniqueness, we use the following lemma.

\begin{Lem} \label{3.3k}
Suppose $X$ and $Y$ of $QV(\Pi)$ satisfy $X_t$, $X_{t-}$, $Y_t$, $Y_{t-} > 0$, $X_0 = Y_0$ and
\begin{equation*}
\int_{0}^{t} \frac{dX_s}{X_{s-}} = \int_{0}^{t} \frac{dY_s}{Y_{s-}}
\end{equation*}
for all $t \in \mathbb{R}_{\geq 0}$. Then $X=Y$.
\end{Lem}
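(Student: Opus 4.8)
The plan is to identify the common path
\[
Z_t := \int_0^t \frac{dX_s}{X_{s-}} = \int_0^t \frac{dY_s}{Y_{s-}}, \qquad t \in \mathbb{R}_{\geq 0},
\]
and to show that both $X/X_0$ and $Y/Y_0$ coincide with the Dol\'eans--Dade exponential $\mathcal{E}(Z)$; since $X_0 = Y_0$, this forces $X = Y$.

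First I would record the structure of $Z$. Because $X_t \wedge X_{t-} > 0$ for all $t$, the {\cadlag} path $X$ satisfies $\inf_{s \in [0,T]} X_s > 0$ for every $T$, hence on $[0,T]$ it takes values in a compact subset of $\mathopen{]}0,\infty\mathclose{[}$; consequently $s \mapsto 1/X_{s-}$ is an admissible integrand of $X$ (it is $\tfrac{d}{dx}\log x$ evaluated along $X$; cf.\ Remark~\ref{2.3b}(iii) and Remark~\ref{2.2fa}), so by Proposition~\ref{2.3f} we have $Z \in QV(\Pi)$ with $[Z,Z]_t = \int_0^t X_{s-}^{-2}\,d[X,X]_s$. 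Separating the jump part gives $[Z,Z]^{\mathrm{c}}_t = \int_0^t X_{s-}^{-2}\,d[X,X]^{\mathrm{c}}_s$ and $\Delta Z_s = \Delta X_s/X_{s-}$, so that $1 + \Delta Z_s = X_s/X_{s-} > 0$ for every $s$; in particular $\Delta Z_s \neq -1$. The same identities hold with $Y$ in place of $X$, and no reconciliation between the two representations is needed since $Z$ is a single path and $[Z,Z]^{\mathrm{c}}$, $\Delta Z$ are intrinsic to it.

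Next I would apply the It\^o--F\"ollmer formula (Theorem~\ref{2.2c}, in the form of Remark~\ref{2.2fa} with $f = \log$ on $U = \mathopen{]}0,\infty\mathclose{[}$) to $\log X_t$, which yields
\[
\log X_t - \log X_0 = \int_0^t \frac{dX_s}{X_{s-}} - \frac{1}{2}\int_0^t \frac{d[X,X]^{\mathrm{c}}_s}{X_{s-}^2} + \sum_{0 < s \leq t}\left\{ \log\frac{X_s}{X_{s-}} - \frac{\Delta X_s}{X_{s-}} \right\}.
\]
Rewriting the right-hand side, via the previous paragraph, as $Z_t - \tfrac{1}{2}[Z,Z]^{\mathrm{c}}_t + \sum_{0 < s \leq t}\{\log(1+\Delta Z_s) - \Delta Z_s\}$ and exponentiating gives $X_t = X_0\,\mathcal{E}(Z)_t$; the infinite product converges absolutely because $\log(1+x)-x = O(x^2)$, $\sum_{0 < s \leq t}(\Delta Z_s)^2 \leq [Z,Z]_t < \infty$, and $1+\Delta Z_s$ is bounded away from $0$ on $[0,t]$. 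Running the identical computation with $Y$ and the same $Z$ gives $Y_t = Y_0\,\mathcal{E}(Z)_t$, whence $X_t/X_0 = \mathcal{E}(Z)_t = Y_t/Y_0$ and $X = Y$. (Alternatively, once $Z \in QV(\Pi)$ is established one can show via associativity (Theorem~\ref{2.3m}) that $X/X_0$ and $Y/Y_0$ are admissible integrands of $Z$ each solving $W_t = 1 + \int_0^t W_{s-}\,dZ_s$, and invoke the uniqueness assertion of Proposition~\ref{3.1c}.) The only genuinely technical ingredient is the convergence of the Dol\'eans--Dade product, which is standard given $Z \in QV(\Pi)$ and $\Delta Z_s \neq -1$, so I do not anticipate a real obstacle; the argument is essentially a one-line application of the It\^o--F\"ollmer formula to the logarithm.
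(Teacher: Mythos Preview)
Your proof is correct. Your main line of argument---apply the It\^o--F\"ollmer formula to $\log X$ and $\log Y$, rewrite everything in terms of the common path $Z$, and read off $X_t = X_0\,\mathcal{E}(Z)_t = Y_0\,\mathcal{E}(Z)_t = Y_t$---is a direct, explicit computation that bypasses any appeal to uniqueness. The paper instead takes precisely the route you sketch in your parenthetical remark: it observes that $X$ and $Y$ are admissible integrands of $Z$, uses associativity (Theorem~\ref{2.3m}) to see that $X/X_0$ and $Y/Y_0$ both solve $W_t = 1 + \int_0^t W_{s-}\,dZ_s$, and then invokes the uniqueness assertion of Proposition~\ref{3.1c}. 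Your approach is more self-contained in that it does not rely on the uniqueness half of Proposition~\ref{3.1c} (which in turn uses Gronwall); the paper's approach is shorter to write down once those tools are in hand, but its claim that $X$ and $Y$ are ``easily'' seen to be admissible integrands of $Z$ is, in effect, justified by exactly the $\log$-computation you carry out (since $X_t = X_0\exp(Z_t + A_t)$ with $A \in FV_{\mathrm{loc}}$ is what makes $X$ an admissible integrand of $Z$ via Remark~\ref{2.3b}(iii)).
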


\begin{proof}
Let 
\begin{equation*}
Z_t = \int_{0}^{t} \frac{dX_s}{X_{s-}} = \int_{0}^{t} \frac{dY_s}{Y_{s-}}, \quad t \in \mathbb{R}_{\geq 0}.
\end{equation*}
Then we can easily check that both $X$ and $Y$ are admissible integrands of $Z$.
Theorem~\ref{2.3m} implies that $X/X_0$ and $Y/Y_0$ are
solutions to the homogeneous linear equation driven by $Z$ with initial value $1$.
Hence, by Proposition~\ref{3.1c},
we obtain $X = X_0 \mathcal{E}(Z) = Y_0 \mathcal{E}(Z) = Y$.
\end{proof}

\newtheorem{proof3.3i}{Proof of Proposition~\ref{3.3i}}

\begin{proof3.3i}
(i)
We first prove that $Y=M^U(X)$ satisfies \eqref{3.3j}.
By definition, the derivative of $V$ is $V'(y) = V(y)/ (y-w(y))$ and
the derivative of $U$ is $U' = 1/V'$.
Therefore, by Proposition~\ref{3.3f} and Corollary~\ref{3.3g},
we have
\begin{equation*}
Y_{t-} - w(\overline{Y}_t) = Y_{t-} -  U(\overline{X}_t) + U'(\overline{X}_t)V(U(\overline{X}_t)) = U'(\overline{X}_t)X_{t-}.
\end{equation*}
This proves
\begin{equation*}
Y_t = \int_0^t U'(\overline{X}_s)dX_s = \int_0^t \frac{Y_{s-}-w(\overline{Y}_s)}{X_{s-}} dX_s.
\end{equation*}
The It{\^o}-F{\"o}llmer integral of this equation is well-defined since $U'(\overline{X})$
has locally finite variation.

Next, we show uniqueness.
Suppose that $Y$ is a solution of \eqref{3.3j} satisfying the $w$-drawdown constraint
and that the running maximum $\overline{Y}$ is continuous. 
Since $Y$ satisfies the $w$-drawdown constraint, $M_t^V(Y)$ and $M^V_{t-}(Y)$
are strictly positive. 
It is easy to see that $\frac{Y - w(\overline{Y})}{X}$ is an admissible integrand of $X$,
and $\frac{1}{Y - w(\overline{Y})}$ is an admissible integrand of $X$.
Then we have 
\begin{equation*}
\int_0^t \frac{dY_s}{Y_{s-}-w(\overline{Y}_s)} = \int_0^t \frac{1}{Y_{s-}-w(\overline{Y}_s)} \frac{Y_{s-} - w(\overline{Y}_s)}{X_{s-}} dX_s = \int_0^t \frac{dX_s}{X_{s-}}
\end{equation*}
thanks to Theorem~\ref{2.3m}.
Moreover, we see that
\begin{equation*}
\int_0^t \frac{dY_s}{Y_{s-}-w(\overline{Y}_s)}
= \int_0^t \frac{V'(\overline{Y}_s)}{M^V_{s-}(Y)} dY_s =  \int_0^t \frac{dM^V_s(Y)}{M^V_{s-}(Y)}.
\end{equation*}
The second equality follows from Theorem~\ref{2.3m}.
Hence,
\begin{equation*}
\int_0^t \frac{dX_s}{X_{s-}} = \int_0^t \frac{dM^V_s(Y)}{M^V_{s-}(Y)}.
\end{equation*}
This proves $X = M^V(Y)$ because of Lemma~\ref{3.3k}.
Then we can deduce that $Y_t = M^U_t(X)$ from Corollary~\ref{3.3g}.

(ii)
Suppose that $Y \in QV(\Pi)$ satisfies the $w$-drawdown constraint and $Y_0 = a^*$,
and that the running maximum $\overline{Y}$ is continuous.
Let $X := M^V(Y)$.
Then, $Y$ is an admissible integrand of $X$ due to Corollary~\ref{3.3g} and the definition of an Az{\'e}ma-Yor path.
Hence $\frac{Y - w(\overline{Y})}{M^V(Y)}$ is also an admissible integrand of $X$.
By the associativity of It{\^o}-F{\"o}llmer integrals, we have
\begin{equation*}
\int_0^t \frac{Y_{s-}-w(\overline{Y}_s)}{M^V_{s-}(Y)} dM^V(Y) = \int_0^t \frac{Y_{s-}-w(\overline{Y}_s)}{M^V_{s-}(Y)} V'(\overline{Y}_s) dY_s = \int_0^t \frac{M^V_{s-}(Y)}{M^V_{s-}(Y)} dY_s = Y_t - Y_0.
\end{equation*}
Therefore $X = M^V(Y)$ satisfies \eqref{3.3j}.
Let $Z$ be any {\cadlag} path of $QV(\Pi)$ satisfying \eqref{3.3j} such that $Z_0 = a$
and $\overline{Z}$ is continuous.
Then we can show
\begin{equation*}
\int_0^t \frac{dZ_s}{Z_{s-}} = \int_0^t \frac{dM^V_s(Y)}{M^V_{s-}(Y)},
\end{equation*}
in the same way as the proof of the first part.
Consequently, we obtain $M^V(Y) = Z$.
\end{proof3.3i}

\subsection{Applications to Finance}

\subsubsection{Model-free CPPI with jumps}

As an application of the results for linear integral equations,
we consider model-free constant proportion portfolio insurance (CPPI) and dynamic proportion portfolio insurance (DPPI) strategies with jumps,
which extend the results of
%Schied~
\cite{Schied_2014}.
Throughout this section, we will fix a sequence of partitions $\Pi = (\pi_n)_{n \in \mathbb{N}}$
such that $\lvert \pi_n \rvert \to 0$ as $n \to \infty$.

Assume that there is one riskless asset and one risky asset in the market.
The price process of the riskless asset is denoted by $B$. Mathematically this is a path of locally finite variation with $B_0 = 1$.
The risky asset $S$ belongs to $QV(\Pi)$.
We suppose that both $B$ and $S$ are strictly positive valued 
and that $B_{t-}$ and $S_{t-}$ are also strictly positive.

We consider a portfolio that consists of $\xi_t$ units of the risky asset and $\eta_t$ units of the riskless asset at time $t$.
We will assume that $\xi$ is an admissible integrand of $S$ and $\eta$ is a {\cadlag} path.
The pair $(\xi,\eta)$ is called a trading strategy.
The value of the portfolio at time $t$ is defined by 
$V_t = \xi_t S_t + \eta_t B_t$.
A strategy $(\xi,\eta)$ is self-financing if
\begin{equation*}
V_t = V_0 + \int_0^t \xi_{s-} dS_s + \int_0^t \eta_{s-} dB_s
\end{equation*}
holds for all $t$.

Let $K$ be a nonnegative {\cadlag} path of locally finite variation.
We will construct a self-financing trading strategy $(\xi,\eta)$
such that $V_t \geq K_t$ for all $t$.
%Schied~
\cite{Schied_2014}
discusses a CPPI strategy
and its dynamic extension, a DPPI strategy,
when the floor constraint is represented in the form $K_t = \alpha V_0 B_t$ and asset prices are continuous.
We will relax the assumptions of these results.

A DPPI strategy is a trading strategy given by 
\begin{equation} \label{3.4b}
\xi_t = \frac{m_t (V_t - K_t)}{S_t}, \quad \eta = \frac{V_t - \xi_t S_t}{B_t},
\end{equation}
where the multiplier $m \colon \mathbb{R}_{\geq 0} \to \mathbb{R}$ is a {\cadlag} path.
Moreover, we assume $m$ to be an admissible integrand of $X$.
When $m$ is constant, this is called a CPPI strategy.

We will first discuss the existence of a self-financing DPPI strategy.
If a DPPI strategy $(\xi,\eta)$ with multiplier $m$ is self-financing,
the value process must satisfy
\begin{equation} \label{6.1c}
V_t - V_0 = \int_0^t \frac{m_{s-} (V_{s-} -K_{s-})}{S_{s-}} dS_s + \int_0^t \frac{V_{s-} - m_{s-} (V_{s-} - K_{s-})}{B_{s-}} dB_s.
\end{equation}
Set
\begin{equation*}
X_t = \int_0^t \frac{m_{s-}}{S_{s-}} dS_s + \int_0^t \frac{(1 - m_{s-})}{B_{s-}} dB_s , \quad 
H_t = V_0 - \int_0^t K_{s-} \left[ dX_s - \frac{1}{B_{s-}} dB_s \right]  .
\end{equation*}
These paths are well-defined by assumption.
Then \eqref{6.1c} can be rewritten as 
\begin{equation} \label{6.1f}
V_t = H_t + \int_0^t V_{s-} dX_s.
\end{equation}
This is an inhomogeneous linear integral equation.
To apply Proposition~\ref{3.1h} to \eqref{6.1f} we need to assume 
that $X$ satisfies $\Delta X_t \neq -1$ for all $t$.
Under this assumption
\begin{align} \label{6.1fa}
V_t
&= \mathcal{E}(X)_t \left(H_0 + \int_0^t \frac{dH_s }{\mathcal{E}(X)_{s-}}- \int_0^t \frac{d[H,X]^{\mathrm{c}}_s}{\mathcal{E}(X)_{s-}}   - \sum_{0 < s \leq t} \frac{1}{\mathcal{E}(X)_{s-}} \frac{\Delta H_s \Delta X_s}{1+ \Delta X_s} \right)
\end{align}
is the unique solution of \eqref{6.1f}.
Consequently, if $V$ is given by \eqref{6.1fa}, the DPPI strategy of \eqref{3.4b}
is self-financing.

We next consider the condition $V_t \geq K_t$.
By the definition of $H$, we see that $V_t /\mathcal{E}(X)_t$ has the representation 
\begin{align} \label{6.1h}
\frac{V_t}{\mathcal{E}(X)_t}
&= V_0 + \int_0^t \frac{K_{s-}}{\mathcal{E}(X)_{s-}} \left\{ -dX_s + \frac{1}{B_{s-}}dB_s + d[X,X]^{\mathrm{c}}_s \right\}    \notag \\
& \quad + \sum_{0 < s \leq t} \frac{K_{s-}}{\mathcal{E}(X)_{s-}} \cdot \frac{ (\Delta X_s)^2 - B_{s-}^{-1} \Delta B_s \Delta X_s}{1+ \Delta X_s}.
\end{align}
Using Lemma~\ref{3.1ga}, the integration by parts formula (Corollary~\ref{2.3s}) and Corollary~\ref{2.1z},
we have
\begin{align}  \label{6.1ha}
\frac{K_t}{\mathcal{E}(X)_t} - K_0
& = \int_0^t \frac{K_{s-}}{\mathcal{E}(X)_{s-}} \left( - dX_s + d[X,X]^{\mathrm{c}}_s + \frac{dK_s}{K_{s-}} \right)   \notag \\
& \quad + \sum_{0 < s \leq t} \frac{1}{\mathcal{E}(X)_{s-}} \frac{-\Delta K_s \Delta X_s + K_{s-} (\Delta X_s)^2}{1+\Delta X_s}.
\end{align}
Combining \eqref{6.1h} and \eqref{6.1ha}, we obtain 
\begin{align} \label{6.1i}
\frac{V_t}{\mathcal{E}(X)_t} - \frac{K_t}{\mathcal{E}(X)_t}
& = V_0 - K_0 + \int_0^t \frac{1}{\mathcal{E}(X)_{s-}} \left( \frac{K_{s-}}{B_{s-}} dB_s - dK_s \right)   \notag \\
& \quad + \sum_{0 < s \leq t} \frac{1}{\mathcal{E}(X)_{s-}} \cdot \frac{\Delta K_s \Delta X_s - K_{s-}B^{-1}_{s-} \Delta B_s \Delta X_s}{1+\Delta X_s}.
\end{align}

For general $K$, it is hard to know by \eqref{6.1i} whether $V$ satisfies $V_t \geq K_t$ or not.
So we concentrate our attention on the case where the floor constraint has the form $K = LB$ ($L \in FV_{\mathrm{loc}}$ and $L \geq 0$).
In this case, \eqref{6.1ha} can be transformed to the following form by applying the integration by parts formula to $K = LB$:
\begin{equation*}
\frac{V_t}{\mathcal{E}(X)_t} - \frac{K_t}{\mathcal{E}(X)_t}
= V_0 - L_0 - \int_0^t \frac{B_{s-}}{\mathcal{E}(X)_{s-}} dL^{\mathrm{c}}_s - \sum_{0 < s \leq t} \frac{B_s}{\mathcal{E}(X)_{s-}} \cdot\frac{ \Delta L_s}{1+\Delta X_s}.
\end{equation*}
Above formula implies that $V_t / \mathcal{E}(X)_t - K_t /\mathcal{E}(X)_t \geq 0$ holds for all $t$
if $L$ is nondecreasing and satisfies $V_0 \geq L_0$.
In addition, if $\mathcal{E}(X)_t > 0$, we have $V_t \geq K_t$.

These observations are summarized in the following proposition.
\begin{Prop} \label{6.1l}
Let $S \in QV(\Pi)$ be a price process of a risky asset, $B \in FV_{\mathrm{loc}}$ be that of a riskless asset and $V_0 \geq 0$ be the initial wealth.
Suppose that $S,S_{-},B,B_{-}$ are strictly positive and $L$ is a nonincreasing path such that $V_0 \geq L_0$.
For an admissible integrand $m: \mathbb{R}_{\geq 0} \to \mathbb{R}$ of $S$, we define a path $X$ by 
\begin{equation*}
X_t = \int_0^t \frac{m_{s-}}{S_{s-}} dS_s + \int_0^t \frac{(1 - m_{s-})}{B_{s-}} dB_s.
\end{equation*}
If $m$ satisfies $\Delta X_t \neq -1$ for all $t$,
the DPPI strategy given below is self-financing.
\begin{gather*}
V_t = L_t B_t + \mathcal{E}(X)_{t} \left( V_0 - L_0 - \int_0^t \frac{B_{s-}}{\mathcal{E}(X)_{s-}} dL^{\mathrm{c}}_s - \sum_{0 < s \leq t} \frac{B_s}{\mathcal{E}(X)_{s-}} \frac{ \Delta L_s}{1+\Delta X_s}  \right),  \notag \\
\xi_t = \frac{m_t (V_t - L_t B_t)}{S_t}, \quad \eta_t = \frac{V_t - \xi_t S_t}{B_t}.
\end{gather*}
Moreover, if the condition $\Delta X_{t} > -1$ holds for all $t \geq 0$, the value process of this portfolio 
satisfies $V_t \geq L_t B_t$ for all $t$.
\end{Prop}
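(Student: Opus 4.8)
The plan is to establish the two assertions of Proposition~\ref{6.1l} in turn: first that the displayed triple $(V,\xi,\eta)$ is a self-financing DPPI strategy, and then, under the additional hypothesis $\Delta X_t>-1$, that the floor constraint $V_t\ge L_tB_t$ holds.

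For the self-financing part I would use the equivalence, already recorded in \eqref{6.1c}--\eqref{6.1f}, between self-financing of the DPPI strategy \eqref{3.4b} and the requirement that its value process solve the inhomogeneous linear equation $V_t=H_t+\int_0^t V_{s-}\,dX_s$: substituting the formulas for $\xi$ and $\eta$ into the definition of self-financing and using $dX_s-B_{s-}^{-1}\,dB_s=m_{s-}\bigl(S_{s-}^{-1}\,dS_s-B_{s-}^{-1}\,dB_s\bigr)$ produces exactly \eqref{6.1f}, and since this computation is reversible it suffices to exhibit the unique solution of \eqref{6.1f} and read off $(\xi,\eta)$ from \eqref{3.4b}. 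To apply Proposition~\ref{3.1h} I first note that $K=LB$ is a $C^1$-function of the locally-finite-variation path $(L,B)$, hence an admissible integrand of $X$ by Remark~\ref{2.3b}(iii); consequently $H_t=V_0-\int_0^t K_{s-}\,dX_s+\int_0^t K_{s-}B_{s-}^{-1}\,dB_s$ is of the form \eqref{3.1ja} with $\xi=-K$ and $A_t=\int_0^t K_{s-}B_{s-}^{-1}\,dB_s\in FV_{\mathrm{loc}}$, and a path of this form is itself an admissible integrand of $X$ (argue as in Lemma~\ref{2.3h}). Since $\Delta X_t\ne-1$, Proposition~\ref{3.1h} then gives that \eqref{6.1f} has the unique solution \eqref{6.1fa}, which is the $V$ in the statement; checking in addition that the resulting $\xi$ is an admissible integrand of $S$, we conclude the displayed DPPI strategy is self-financing.

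For the floor constraint I would follow the calculation preceding the proposition. Starting from \eqref{6.1fa} and combining the representation \eqref{3.1jb}, Lemma~\ref{3.1ga}, the integration-by-parts formula (Corollary~\ref{2.3s}) and Corollary~\ref{2.1z}, one obtains \eqref{6.1h} for $V_t/\mathcal E(X)_t$ and the companion identity \eqref{6.1ha} for $K_t/\mathcal E(X)_t$; subtracting yields \eqref{6.1i}. Specializing to $K=LB$ and performing one further integration by parts on the product $LB$ produces the cancellation
\[
\frac{V_t}{\mathcal E(X)_t}-\frac{L_tB_t}{\mathcal E(X)_t}=V_0-L_0-\int_0^t\frac{B_{s-}}{\mathcal E(X)_{s-}}\,dL^{\mathrm c}_s-\sum_{0<s\le t}\frac{B_s}{\mathcal E(X)_{s-}}\,\frac{\Delta L_s}{1+\Delta X_s},
\]
which on rearranging is precisely the displayed formula for $V$. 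Now assume in addition $\Delta X_t>-1$ for all $t$. By Remark~\ref{3.1g} we then have $\mathcal E(X)_t>0$ and $\mathcal E(X)_{t-}>0$; since $L$ is nonincreasing we have $dL^{\mathrm c}\le0$ and $\Delta L_s\le0$, while $B_{s-},B_s>0$ and $1+\Delta X_s>0$, so after the leading minus signs the integral and the sum each contribute nonnegatively, and together with $V_0-L_0\ge0$ this forces $V_t/\mathcal E(X)_t\ge L_tB_t/\mathcal E(X)_t$; multiplying by $\mathcal E(X)_t>0$ gives $V_t\ge L_tB_t$ for all $t$.

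The step I expect to be the main obstacle is the bookkeeping, not any conceptual point: one must verify that every It{\^o}--F{\"o}llmer integral in the chain genuinely exists (the integral $\int_0^t H_{s-}\,d(1/\mathcal E(X))_s$ and the various integrals generated when applying Corollaries~\ref{2.2h} and~\ref{2.3s}), which is where admissibility of $H$, Theorem~\ref{2.3m}, Lemma~\ref{3.1ga} and the associativity rule are needed, and one must check that the repeated integration-by-parts and Lemma~\ref{3.1ga} steps combine to give the precise cancellation in the clean identity above. Once that identity is in hand, the sign analysis delivering $V_t\ge L_tB_t$ is routine.
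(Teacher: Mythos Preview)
Your proposal is correct and follows essentially the same route as the paper: the paper's ``proof'' is the discussion in Section~3.4.1 preceding the proposition (equations \eqref{6.1c}--\eqref{6.1i} and the subsequent specialization to $K=LB$), which you have faithfully reconstructed, including the reduction to the inhomogeneous linear equation \eqref{6.1f}, the application of Proposition~\ref{3.1h}, the integration-by-parts manipulations yielding the clean identity for $(V_t-K_t)/\mathcal{E}(X)_t$, and the sign analysis using $L$ nonincreasing and $\mathcal{E}(X)>0$ via Remark~\ref{3.1g}.
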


Suppose that the multiple $m$ is constant in Proposition~\ref{6.1l}.
In this case, the trading strategy $(\xi,\eta)$ is called CPPI. 
If $m_t = \overline{m}$ for all $t$, the process $X$ in the proposition is
\begin{equation*}
X_t = \overline{m} \int_0^t \frac{1}{S_{s-}} dS_s + (1- \overline{m})\int_0^t \frac{1}{B_{s-}} dB_s.
\end{equation*}
Then we have
\begin{equation*}
\Delta X_t = \overline{m} \frac{\Delta S_t}{S_{t-}} + (1-\overline{m}) \frac{\Delta B_t}{B_{t-}}.
\end{equation*}
By the assumptions $S, B > 0$, the conditions $\Delta S_t/S_{t-} > -1$ and $\Delta B_t/B_{t-} > -1$ are always satisfied.
If we choose $\overline{m}$ such that $0 \leq \overline{m} \leq 1$, we have $\Delta X_{t-} > -1$.

\subsubsection{Portfolio strategies satisfying drawdown constraint}

We will consider portfolio strategies satisfying the drawdown constraint
as an application of observations about drawdown equations. 
Let $S,B$ be the same paths as in Section 3.4.1, $V_0$ be initial wealth and $w$ be a function satisfying
the assumptions of Proposition~\ref{3.3i}.
Our purpose is to find a self-financing trading strategy that satisfies $V_t \geq w(\overline{V}_t)$.
If the value process $V$ of a portfolio satisfies the $w$-drawdown constraint in the sense of Definition~\ref{3.3h},
this condition actually holds.
Then, it is enough to construct a trading strategy whose value process
is equal to an Az{\'e}ma-Yor path $M^U(S)$ with initial value $V_0$.
Recall that an Az{\'e}ma-Yor path associated to the path $S$ and $U \colon [S(0),\infty\mathclose{[} \to \mathbb{R}$ is defined by
$M^U(S) = U(\overline{S}) - U'(\overline{S})(\overline{S} - S)$.
Here we assume $U(S_0) = V_0$.
By Proposition~\ref{3.3b}, this path has another expression
\begin{equation*}
M^U_t(S) = V_0 + \int_0^t U'(\overline{S}_s) dS_s.
\end{equation*}
If the Az{\'e}ma-Yor path $M^U(S)$ is equal to the value process $V$ of some trading strategy, $V$ should satisfy
\begin{equation*}
V_t = V_0 + \int_0^t U'(\overline{S}) dS_s.
\end{equation*}
This is the self-financing condition for a constant riskless asset.
In other words, this is the self-financing condition when we consider the discounted price of the risky asset. 
This equivalence is a well-known result of mathematical finance in the classical It{\^o} calculus framework.
It is also true in our framework.

\begin{Prop} \label{6.2b}
Let $S \in QV(\Pi)$ be a risky asset, $B \in FV_{\mathrm{loc}}$ be a riskless asset with initial price $B_0 = 1$
and $V_0 \geq 0$ be an initial wealth.
Suppose all of $S,S_{-},B,B_{-}$ are strictly positive valued. Define $\widetilde{V} = V/B$ and $\widetilde{S} = S/B$.
Then for a trading strategy $(\xi,\eta)$, the following two conditions are equivalent.
\begin{enumerate}
\item The trading strategy $(\xi,\eta)$ is self-financing.
\item For all $t \in \mathbb{R}_{\geq 0}$,
\begin{equation} \label{6.2ba}
\widetilde{V}_t = V_0 + \int_0^t \xi_{s-} d\widetilde{S}_s.
\end{equation}
\end{enumerate}
\end{Prop}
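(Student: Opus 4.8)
The plan is to derive both implications from the It\^o--F\"ollmer integration by parts formula, applied to the products $V=B\widetilde V$ and $S=B\widetilde S$, exploiting two structural features of the setup: $B\in FV_{\mathrm{loc}}$ with $B,B_{-}>0$, so that $1/B\in FV_{\mathrm{loc}}$ as well (note $\inf_{[0,T]}B>0$ for each $T$, by the \cadlag\ property), and $B\cdot(1/B)\equiv 1$ is constant. Because $B$ has locally finite variation, Corollary~\ref{2.1z} ensures that every quadratic covariation bracket appearing below --- $[S,1/B]$, $[V,1/B]$, $[\widetilde V,B]$ --- reduces to the absolutely convergent sum of jump products $\sum_{0<s\le t}\Delta(\cdot)_s\,\Delta(\cdot)_s$; this is what makes the computation tractable in the \cadlag\ setting.

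The first step is to set up the relevant memberships. Since $S\in QV(\Pi)$ and $1/B\in FV_{\mathrm{loc}}$, the path $\widetilde S=S/B$ lies in $QV(\Pi)$ by Proposition~\ref{2.1m} and Remark~\ref{2.1ya} (apply $f(b,x)=x/b$ near the compact range of $(B,S)$), and Corollary~\ref{2.2h} gives the representation $\widetilde S_t=S_0+\int_0^t(1/B)_{s-}\,dS_s+\int_0^t S_{s-}\,d(1/B)_s+\sum_{0<s\le t}\Delta S_s\,\Delta(1/B)_s$, in which $(1/B)_{-}$ is an admissible integrand of $S$ (Remark~\ref{2.3b}(iii)); so $\widetilde S$ equals an admissible integrand integrated against $S$, plus a path of $FV_{\mathrm{loc}}$. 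Moreover, if $\xi=\nabla_x f(C,S)$ is an admissible integrand of $S$ with $C\in FV_{\mathrm{loc}}^{m}$, then $\xi\,(1/B)=\nabla_x\widetilde f(C,B,S)$ with $\widetilde f(c,b,x)=f(c,x)/b$, again of class $C^{1,2}$, so $\xi\,(1/B)$ is an admissible integrand of $S$ too. Combining this with associativity (Theorem~\ref{2.3m}), Remark~\ref{2.3d}, and linearity of the Riemann sums, the integral $\int_0^t\xi_{s-}\,d\widetilde S_s$ exists and equals $\int_0^t\frac{\xi_{s-}}{B_{s-}}\,dS_s+\int_0^t\xi_{s-}S_{s-}\,d(1/B)_s+\sum_{0<s\le t}\xi_{s-}\,\Delta S_s\,\Delta(1/B)_s$.

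Next, I would prove (i)$\Rightarrow$(ii). From self-financing, $V=\int\xi_-\,dS+A$ with $A\in FV_{\mathrm{loc}}$, hence $V\in QV(\Pi)$ (Corollary~\ref{2.1z}, Proposition~\ref{2.3f}); integration by parts (Corollary~\ref{2.2h}) applied to $\widetilde V=V\cdot(1/B)$ gives $\widetilde V_t-V_0=\int_0^t(1/B)_{s-}\,dV_s+\int_0^t V_{s-}\,d(1/B)_s+\sum_{0<s\le t}\Delta V_s\,\Delta(1/B)_s$. Splitting the Riemann sums and invoking Theorem~\ref{2.3m} shows $\int_0^t(1/B)_{s-}\,dV_s=\int_0^t\frac{\xi_{s-}}{B_{s-}}\,dS_s+\int_0^t\frac{\eta_{s-}}{B_{s-}}\,dB_s$, while $\Delta V_s=\xi_{s-}\Delta S_s+\eta_{s-}\Delta B_s$. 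Subtracting the expansion of $\int_0^t\xi_{s-}\,d\widetilde S_s$ from the previous step, all terms containing $\Delta S_s\,\Delta(1/B)_s$ cancel, and after inserting the pointwise identity $V_{s-}-\xi_{s-}S_{s-}=\eta_{s-}B_{s-}$ (immediate from $V=\xi S+\eta B$ and right-continuity) the difference $\widetilde V_t-V_0-\int_0^t\xi_{s-}\,d\widetilde S_s$ becomes $\int_0^t\eta_{s-}\,d\nu_s$, where $\nu$ is the signed measure $(1/B)_{-}\,dB+B_{-}\,d(1/B)+\sum_{s}\Delta B_s\,\Delta(1/B)_s\,\delta_s$. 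But $\nu=0$, since by the Leibniz rule for finite-variation paths (equivalently Corollary~\ref{2.2h} together with Corollary~\ref{2.1z}) it is the Stieltjes measure of the constant path $B\cdot(1/B)\equiv1$. Hence $\widetilde V_t=V_0+\int_0^t\xi_{s-}\,d\widetilde S_s$. For (ii)$\Rightarrow$(i), I would run the same computation backwards: (ii) and the previous step show $\widetilde V=\int(\xi/B)_-\,dS+(\text{path of }FV_{\mathrm{loc}})\in QV(\Pi)$, so integration by parts applies to $V=B\cdot\widetilde V$; expanding $V_t-V_0=\int_0^t\widetilde V_{s-}\,dB_s+\int_0^t B_{s-}\,d\widetilde V_s+\sum_{0<s\le t}\Delta B_s\,\Delta\widetilde V_s$, rewriting $\int_0^t B_{s-}\,d\widetilde V_s$ via Theorem~\ref{2.3m} with base integrator $S$, and using the same cancellations and the identity $\eta_{s-}B_{s-}=V_{s-}-\xi_{s-}S_{s-}$ yields $V_t-V_0=\int_0^t\xi_{s-}\,dS_s+\int_0^t\eta_{s-}\,dB_s$, i.e. (i).

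The main difficulty here is organizational rather than conceptual: one must keep careful track of which integrals are genuine It\^o--F\"ollmer integrals and which are ordinary Lebesgue--Stieltjes integrals, verify before each manipulation that the It\^o--F\"ollmer integral in question actually converges (invoking Theorem~\ref{2.3m} and the admissibility of $\xi\,(1/B)$), and confirm that matching all jump terms together with all continuous Stieltjes parts genuinely forces equality of the \cadlag\ paths. Once the observation ``$B\in FV_{\mathrm{loc}}$, so all brackets are jump sums and $B\cdot(1/B)$ is constant'' is in place, the rest of the computation is essentially forced.
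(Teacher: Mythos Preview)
Your proof is correct and follows essentially the same route as the paper: both arguments apply integration by parts to $V\cdot(1/B)$ and $S\cdot(1/B)$, use that $B\in FV_{\mathrm{loc}}$ reduces all brackets to jump sums, and invoke the identity $B\cdot(1/B)\equiv 1$ to make the residual $\eta$-terms vanish. The only cosmetic difference is that the paper establishes directly that $\xi$ is an admissible integrand of $\widetilde S$ via the change of variables $g(a,b,x)=F(a,bx)/b$, whereas you obtain the existence of $\int_0^t\xi_{s-}\,d\widetilde S_s$ by decomposing $\widetilde S$ as an It\^o--F\"ollmer integral plus an $FV_{\mathrm{loc}}$ path and checking that $\xi/B$ is admissible for $S$; both routes feed into the same computation.
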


\begin{proof}
We first show that an admissible integrand $\xi$ is also an admissible integrand of $\widetilde{S}$.
Fix $T > 0$ and choose $A \in FV_{\mathrm{loc}}^m$ and a function $F$ of $C^{1,2}$-class that satisfies
$\xi_t = \frac{d}{dx} F(A_t,S_t)$ ($t \in [0,T]$).
Then $\xi$ can be represented as $\xi = \frac{d}{dx} g(A_t,B_t,\widetilde{S}_t)$,
where $g(a,b,x) = F(a,bx)/b$.
This means that $\xi$ is an admissible integrand of $\widetilde{S}$.

Next we prove that (i) implies (ii).
By Corollary~\ref{2.3s}, we have
\begin{align}
& \frac{V_t}{B_t}
= \frac{V_0}{B_0} + \int_0^t \xi_{s-} \left( S_{s-} d\left(\frac{1}{B}\right)_s + \frac{1}{B_{s-}} dS_s + d\left[ S,\frac{1}{B} \right]_s \right)  \notag \\
& \qquad\quad + \int_0^t \eta_{s-} \left( B_{s-} d\left( \frac{1}{B}\right)_s + \frac{1}{B_{s-}} dB_s + d\left[ B, \frac{1}{B} \right]_s \right),    \notag \\
& \widetilde{S}_t = \frac{S_0}{B_0} + \int_0^t S_{s-} d\left(\frac{1}{B} \right)_s + \int_0^t \frac{1}{B_{s-}} dS_s + \left[ S,\frac{1}{B} \right]_t,  \label{6.2c}  \\
& \int_0^t B_{s-} d\left(\frac{1}{B} \right)_s + \int_0^t \frac{1}{B_{s-}} dB_s + \left[ B, \frac{1}{B} \right]_t = 0.   \label{6.2d}
\end{align}
Combining these three equations, we obtain \eqref{6.2ba}.

It remains to prove that (ii) implies (i).
Using condition (ii), Corollary~\ref{2.3s}, Theorem~\ref{2.3m}, \eqref{6.2c}
and \eqref{6.2d}, we get
\begin{align*} 
V_t
& = V_0 + \int_0^t \xi_{s-} dS_s + \int_0^t \eta_{s-} dB_s + \int_0^t \xi_{s-} \left( B_{s-} d\left[ S,\frac{1}{B} \right]_s + \frac{1}{B_{s-}} d[S,B]_s + d\left[ \left[ S,\frac{1}{B} \right],B \right]_s \right).
\end{align*}
The integrator of the last term satisfies
\begin{align*}
& \int_0^t B_{s-} d\left[ S,\frac{1}{B} \right]_s + \int_0^t \frac{1}{B_{s-}} d[S,B]_s + \left[ \left[ S,\frac{1}{B} \right],B \right]_t  \notag  \\
& = \sum_{0 < s \leq t} \Delta S_s \left( B_{s-}  \Delta \frac{1}{B_s} + \frac{1}{B_{s-}} \Delta B_s +  \Delta \frac{1}{B_{s}} \Delta B_s \right) = 0.
\end{align*}
Therefore, $V$ satisfies the self-financing condition.
\end{proof}

Let us now formulate rigorously the rough observation made before Proposition~\ref{6.2b}.
To apply the results about Az{\'e}ma-Yor paths and drawdown equations,
the running maximum $\overline{S}$ of the risky price process must be continuous.
Under this condition,
the Az{\'e}ma-Yor path $M^U(S)$ associated with $S$ and $U$ satisfies the drawdown constraint 
\begin{equation*}
M^U_t(S) = V_0 + \int_0^t U'(\overline{S}_s) dS_s.
\end{equation*}
Hence, the value process is $V := M^U(S)$ and the trading strategy $\xi := U'(\overline{S})$ is self-financing.
Moreover, if $U$ is that of Proposition~\ref{3.3i}, 
the value process $V$ satisfies the $w$-drawdown constraint.
These observations are summarized as the following proposition.

\begin{Prop} \label{6.2e}
Suppose that the running maximum of the risky price process $S \in QV(\Pi)$ is continuous 
and that the riskless price process is identically 1.
Let $V_0 \geq 0$ be an initial wealth,
and suppose $w \colon [V_0,\infty\mathclose{[} \to \mathbb{R}$ satisfies the same assumptions 
as in Proposition~\ref{3.3i}.
Functions $U$ and $W$ are defined as
\begin{equation*}
W(y) = S_0 \exp\left(\int_{[V_0,y]} \frac{1}{s-w(s)} ds \right), \quad y \in \mathopen{[}a^*,\infty \mathclose{[},
\end{equation*}
and $U = W^{-1}$.
If we set
\begin{equation*}
\xi_t = U'(\overline{S}_t), \quad \eta_t = M^U(S)_t - \xi_t S_t,
\end{equation*}
the trading strategy $(\xi,\eta)$ is self-financing.
Furthermore, the value of portfolio $V := \xi S + \eta$
satisfies the $w$-drawdown constraint in the sense of Definition~\ref{3.3h}.
\end{Prop}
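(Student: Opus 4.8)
The plan is to recognise the value process $V=\xi S+\eta$ as the Az\'ema--Yor path $M^U(S)$, and then to read off the self-financing property from the integral representation of Proposition~\ref{3.3b} and the drawdown property from Proposition~\ref{3.3i}. So the whole argument amounts to matching the notation of the statement with that of these two earlier results.

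First I would verify that $U$ has the regularity demanded by Propositions~\ref{3.3b} and~\ref{3.3i}. Since $w$ is $C^1$ and $y-w(y)>0$ on $[V_0,\infty\mathclose{[}$, the map $s\mapsto 1/(s-w(s))$ is $C^1$ and strictly positive, so $W$ is a strictly increasing function of class $C^2$ with $W(V_0)=S_0$; hence $U=W^{-1}$ is strictly increasing and $C^2$ with $U(S_0)=V_0$. Observe that $W$ is precisely the function denoted $V$ in Proposition~\ref{3.3i} for the data $a=S_0$, $a^*=V_0$ and the path $S$ in place of $X$; here one uses the standing hypotheses that $S,S_-$ are strictly positive and that $\overline S$ is continuous.

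Next I would apply Proposition~\ref{3.3b} with $X=S$ and this $U$: the Az\'ema--Yor path
\begin{equation*}
M^U_t(S)=U(\overline S_t)-U'(\overline S_t)(\overline S_t-S_t)
\end{equation*}
lies in $QV(\Pi)$ and satisfies $M^U_t(S)=V_0+\int_0^t U'(\overline S_s)\,dS_s$. Because $\overline S$ is continuous we have $U'(\overline S_{s-})=U'(\overline S_s)$, and because $\overline S\in FV_{\mathrm{loc}}$ and $U'$ is $C^1$, Remark~\ref{2.3b} shows that $\xi_s=U'(\overline S_s)$ is an admissible integrand of $S$; hence $\int_0^t\xi_{s-}\,dS_s=M^U_t(S)-V_0$. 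Put $V:=M^U(S)$. The definition $\eta_t=V_t-\xi_tS_t$ together with $B\equiv 1$ gives $V_t=\xi_tS_t+\eta_tB_t$, so $V$ is indeed the value process of $(\xi,\eta)$; and since $B$ is constant, $\int_0^t\eta_{s-}\,dB_s=0$, so $V_0+\int_0^t\xi_{s-}\,dS_s+\int_0^t\eta_{s-}\,dB_s=V_t$, which is exactly the self-financing condition (equivalently, invoke Proposition~\ref{6.2b} with $B\equiv 1$, in which case $\widetilde V=V$ and $\widetilde S=S$). Finally, Proposition~\ref{3.3i}(i), applied with $X=S$ and the $w,W,U$ above, asserts that $M^U(S)$ satisfies the $w$-drawdown constraint (and has continuous running maximum); hence $V=M^U(S)$ satisfies $V_t\wedge V_{t-}>w(\overline V_t)$ for all $t$, which is the $w$-drawdown constraint of Definition~\ref{3.3h}.

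The main obstacle is essentially cosmetic: getting the correspondence between the present notation and that of Propositions~\ref{3.3b} and~\ref{3.3i} exactly right, in particular checking $W(V_0)=S_0$ so that $U(S_0)=V_0$. The one point that needs genuine (if brief) attention is that $\xi=U'(\overline S)$ must be a bona fide admissible integrand, so that $\int_0^t\xi_{s-}\,dS_s$ is defined and coincides with the integral appearing in Proposition~\ref{3.3b}; this follows from Remark~\ref{2.3b} once one notes $\overline S\in FV_{\mathrm{loc}}$, and the identification of $\xi_{s-}$ with $U'(\overline S_s)$ is immediate from continuity of $\overline S$. A secondary subtlety, left implicit in the statement, is that $\overline S_t$ must stay in the domain $[S_0,W(\infty)\mathclose{[}$ of $U$; if $W(\infty)<\infty$ this should either be assumed or handled via the localised It\^o--F\"ollmer formula of Remark~\ref{2.2fa}.
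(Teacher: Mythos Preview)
Your proposal is correct and follows essentially the same approach as the paper: the paper's proof is the single line ``The assertion follows from Proposition~\ref{3.3i} immediately,'' but the paragraph preceding the statement of Proposition~\ref{6.2e} spells out exactly the argument you give---identify $V=M^U(S)$, read off the self-financing condition from the integral representation of Proposition~\ref{3.3b}, and obtain the $w$-drawdown constraint from Proposition~\ref{3.3i}(i). Your version is more careful about the regularity of $U$, the admissibility of $\xi=U'(\overline S)$, and the domain issue for $U$, all of which the paper leaves implicit.
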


\begin{proof}
The assertion follows from Proposition~\ref{3.3i} immediately.
\end{proof}

\appendix
\section{On the convergence of a sequence of discrete measures on $\mathbb{R}_{\geq 0}$}

Let us consider a sequence of measures $(\mu_n)$ of the form
\begin{equation} \label{a1a}
\mu_n = \sum_{i \geq 0} a^{n}_i \delta_{t^n_i},
\end{equation}
where $a^n_i \geq 0$ and $\pi_n = (t^n_i)_{i \in \mathbb{N}}$ is a partition of $\mathbb{R}_{\geq 0}$.

\begin{Lem} \label{a1b}
Let $\Pi = (\pi_n)_{n \in \mathbb{N}}$ be a partition of $\mathbb{R}_{\geq 0}$ satisfying $\lvert \pi_n \rvert \to 0$.
Suppose that, for each $n \in \mathbb{N}$, $\mu_n$ is a locally finite measure of the form \eqref{a1a} and 
the distribution functions of $(\mu_n)$ converge pointwise to that of a positive locally finite measure $\mu$ with $\mu(\{ 0 \}) = 0$.
Moreover, we assume the following condition.
\begin{itemize}
\item[($*$)] For each $t \in \mathbb{R}_{\geq 0}$ take a sequence $(i_n)$ such that $t \in \mathopen{]}t^n_{i_n},t^n_{i_n+1}]$.
Then $(a^n_{i_n})_{n}$ converges to $\mu(\{ t \})$.
\end{itemize}
Then for all $f \in D(\mathbb{R}_{\geq 0}, \mathbb{R})$ and all $t \in \mathbb{R}_{\geq 0}$, we have
\begin{equation} \label{1e}
\lim_{n \to \infty} \int_{[0,t]} f(s) \mu_n(ds) = \int_{[0,t]} f(s-) \mu(ds).
\end{equation}
\end{Lem}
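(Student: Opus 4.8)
The plan is to derive \eqref{1e} from one convergence property of the distribution functions, in which hypothesis~($*$) does the essential work, and then to reduce a general {\cadlag} integrand to a step function by a routine approximation. Write $F_n(x) = \mu_n([0,x])$, $F(x) = \mu([0,x])$, and $F_n(x-) = \mu_n([0,x[)$, so that $F_n(x) \to F(x)$ for every $x \in \mathbb{R}_{\geq 0}$ by hypothesis. The key claim, and the step I expect to be the main obstacle, is that the left limits also converge to $F$:
\begin{equation*}
\lim_{n\to\infty} F_n(x-) = F(x) \qquad \text{for every } x \in \mathbb{R}_{\geq 0},
\end{equation*}
so that in particular $\mu_n(\{x\}) = F_n(x) - F_n(x-) \to 0$. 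The case $x = 0$ is immediate since $F_n(0-) = 0$ and $F(0) = \mu(\{0\}) = 0$, so fix $x > 0$. From $F_n(x-) \leq F_n(x)$ we get $\limsup_n F_n(x-) \leq F(x)$. For the reverse inequality, let $i_n$ be the index with $x \in \mathopen]t^n_{i_n}, t^n_{i_n+1}]$; the partition points strictly below $x$ are then exactly $t^n_0, \dots, t^n_{i_n}$, so $F_n(x-) = F_n(t^n_{i_n}-) + a^n_{i_n}$. Given $\delta \in \mathopen]0,x[$, as soon as $\lvert \pi_n \rvert < \delta$ we have $t^n_{i_n} \geq t^n_{i_n+1} - \lvert \pi_n \rvert > x - \delta$, hence $F_n(t^n_{i_n}-) \geq F_n(x-\delta) \to F(x-\delta)$; combined with $a^n_{i_n} \to \mu(\{x\})$ from~($*$), this yields $\liminf_n F_n(x-) \geq F(x-\delta) + \mu(\{x\})$. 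Letting $\delta \downarrow 0$ and using that $F$ is non-decreasing with $\lim_{\delta\downarrow 0} F(x-\delta) = \mu([0,x[)$, we obtain $\liminf_n F_n(x-) \geq \mu([0,x[) + \mu(\{x\}) = F(x)$, which proves the claim. (Pointwise convergence of the $F_n$ alone would not suffice here, as the example $\mu_n = \mu = \delta_x$ with $x$ a common partition point shows; the point of~($*$) is precisely to keep the mass of $\mu_n$ near $x$ from concentrating at $x$.)

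The second step reduces \eqref{1e} to step functions. Fix $t$ and $\varepsilon > 0$. Since $f$ is {\cadlag}, choose $0 = s_0 < s_1 < \dots < s_N = t$ such that the oscillation of $f$ on each $[s_{k-1}, s_k[$ is at most $\varepsilon$, and set
\begin{equation*}
g = \sum_{k=1}^{N} f(s_{k-1})\, 1_{[s_{k-1}, s_k[} + f(t)\, 1_{\{t\}}.
\end{equation*}
Then $\lvert f - g \rvert \leq \varepsilon$ on $[0,t]$, hence also $\lvert f(\cdot-) - g(\cdot-) \rvert \leq \varepsilon$ on $[0,t]$. Since $\mu_n([0,t]) = F_n(t) \to F(t)$ and $\mu([0,t]) = F(t) < \infty$, the triangle inequality gives
\begin{equation*}
\limsup_{n\to\infty} \left\lvert \int_{[0,t]} f\, d\mu_n - \int_{[0,t]} f(\cdot-)\, d\mu \right\rvert \leq 2\varepsilon\, F(t) + \limsup_{n\to\infty} \left\lvert \int_{[0,t]} g\, d\mu_n - \int_{[0,t]} g(\cdot-)\, d\mu \right\rvert,
\end{equation*}
so it suffices to prove \eqref{1e} for $g$ and then let $\varepsilon \downarrow 0$.

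The third step treats the step function $g$. Because $1_{\{t\}}$ has identically vanishing left limits and $\mu(\{0\}) = 0$, we have $g(\cdot-) = \sum_{k=1}^{N} f(s_{k-1})\, 1_{\mathopen]s_{k-1}, s_k]}$ up to a $\mu$-null set, so $\int_{[0,t]} g(\cdot-)\, d\mu = \sum_{k=1}^{N} f(s_{k-1})\bigl(F(s_k) - F(s_{k-1})\bigr)$. On the other hand, the atoms of $\mu_n$ lie at partition points, the intervals $[s_{k-1}, s_k[$ with $1 \leq k \leq N$ partition $[0,t[$, and the only possible atom of $\mu_n$ at $t$ itself carries weight $g(t) = f(t)$; hence, with $s_N = t$,
\begin{equation*}
\int_{[0,t]} g\, d\mu_n = \sum_{k=1}^{N} f(s_{k-1})\bigl(F_n(s_k-) - F_n(s_{k-1}-)\bigr) + f(t)\, \mu_n(\{t\}).
\end{equation*}
By the first step, $F_n(s_k-) \to F(s_k)$ for every $k$, $F_n(s_0-) = 0 = F(s_0)$, and $\mu_n(\{t\}) \to 0$; passing to the limit in this finite sum gives $\int_{[0,t]} g\, d\mu_n \to \sum_{k=1}^{N} f(s_{k-1})(F(s_k) - F(s_{k-1})) = \int_{[0,t]} g(\cdot-)\, d\mu$. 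Together with the second step and $\varepsilon \downarrow 0$, this establishes \eqref{1e}.
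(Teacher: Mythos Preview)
Your proof is correct, but it is organized quite differently from the paper's.

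The paper never isolates the statement $F_n(x-)\to F(x)$. Instead it decomposes according to the jump structure of the integrand: for each level $m$ it splits off the finitely many ``big'' jump times $D^m_t=\{s:\lvert\Delta f(s)\rvert>1/m\}$, defines $\mu'_{m,n}$ as the part of $\mu_n$ sitting at partition points whose right-adjacent cell meets $D^m_t$, and handles $\int f\,d\mu'_{m,n}$ by invoking~($*$) directly at those finitely many points. The remainder $\mu''_{m,n}$ is treated by approximating $f(\cdot-)$ with a left-continuous step function $g=\sum b_{i-1}1_{]s_{i-1},s_i]}$ and using only the pointwise convergence $F_n\to F$; the term $I_1=\int(f-f(\cdot-))\,d\mu''_{m,n}$ is small because all surviving jumps of $f$ have size $\le 1/m$.

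Your route is cleaner in one respect: you use~($*$) once, to prove the auxiliary fact $F_n(x-)\to F(x)$, and thereafter the argument is a completely routine step-function approximation with no further case analysis. This packages the role of~($*$) into a single sharp statement (which is in fact equivalent to the conclusion of the lemma for indicator integrands $1_{[0,x[}$), and avoids juggling the two measure decompositions $\mu'_{m,n},\mu''_{m,n}$ simultaneously. The paper's approach, on the other hand, makes more visible \emph{why} the jumps of $f$ are the obstruction: away from them the integrand is essentially continuous and vague convergence suffices, while near them~($*$) supplies exactly the missing mass. Both arguments ultimately reduce to step functions; you approximate $f$ by a right-continuous step function and take left limits afterward, whereas the paper approximates $f(\cdot-)$ directly by a left-continuous one.
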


\begin{Rem} \label{1h}
Condition ($*$) is true if $(\mu_n)$ is given by
\begin{equation*}
\mu_n = \sum_{i} \mu(\mathopen{]}t^n_{i},t^n_{i+1} ]) \delta_{t^n_i}.
\end{equation*}
This condition is also satisfied if $\mu_n = \mu^{\pi_n}_X$ for $X \in QV(\Pi)$.
In this case, Condition ($*$) corresponds to Condition (iii) of Definition~\ref{2.1a}.
\end{Rem}

\begin{proof}
Let $D_t = \left\{ s \in [0,t] \mid \Delta f(s) \neq 0 \right\}$ and $D_t^n = \{ s \in D_t \mid \lvert \Delta f(s) \rvert > 1/n \}$.
For $m \in \mathbb{N}_{\geq 1}$ define finite positive measures on $[0,t]$ by
\begin{gather*}
\mu_{m}'(E) = \mu (D^{m}_t \cap E), \quad \mu_{m}''(E) = \mu (E) - \mu_m'(E) = \mu(E \setminus D^{m}_t),   \notag \\
\mu_{m,n}' = \sum_{t^n_i \leq t } a^n_i 1_{\{ i \mid D^{m}_t \cap \left] t^n_i,t^n_{i+1} \right] \neq \emptyset \}}(i) \delta_{t^n_i}, \quad
\mu_{m,n}'' = \mu_{n} - \mu_{m,n}'.
\end{gather*}
We can deduce from the finiteness of $D^{m}_t$ and the condition ($*$) that
\begin{equation} \label{1f}
\lim_{n \to \infty} \int_{[0,t]} f(s) \mu_{m,n}'(ds) = \int_{[0,t]} f(s-) \mu_{m}'(ds).
\end{equation}
Moreover, we see the distribution functions of $(\mu_{m,n}')_{n}$ converge pointwise to that of $\mu_m'$ on $[0,t]$;
similarly the distribution functions of $(\mu_{m,n}'')_{n}$ converge to that of $\mu_m''$.

Let $C = \sup_{n} \mu_n([0,t]) + \mu([0,t])$.
Now we fix an arbitrary $\delta > 0$ and pick $m$ such that $1/m < \delta/3C$.
Take a function $g$ of the form
\begin{equation*}
g(t) = f(0) 1_{\{ 0 \}} + \sum_{1 \leq i \leq N} b_{i-1} 1_{]s_{i-1},s_{i}]}, \quad 0 = s_0 < s_1 < \dots < s_N \leq t,
\end{equation*}
such that $\lvert g(t) - f(t-) \rvert \leq \delta/3C$ on $[0,t]$.
Then, we have
\begin{align*}
& \left\lvert \int_{[0,t]} f(s) \mu_{m,n}''(ds) - \int_{[0,t]} f(s-) \mu_{m}''(ds)  \right\rvert   \notag \\
& \leq \left\lvert \int_{[0,t]} \left\{ f(s) - f(s-) \right\} \mu_{m,n}''(ds)  \right\rvert + \left\lvert \int_{[0,t]} \left\{ f(s-) - g(s) \right\} \mu_{m,n}''(ds)  \right\rvert  \notag \\
& \quad + \left\lvert \int_{[0,t]} g(s) \mu_{m,n}''(ds) - \int_{[0,t]} g(s) \mu_{m}''(ds) \right\rvert + \left\lvert \int_{[0,t]} \left\{ g(s) - f(s-) \right\} \mu_{m}''(ds) \right\rvert  \notag \\
& = I_1 + I_2 + I_3 + I_4.
\end{align*}
By assumption we see that, for sufficiently large $n$,
\begin{equation*}
I_1 \leq \frac{1}{m} \sup_{n}\mu_n([0,t]) \leq \frac{\delta}{3}, \quad
I_2 \leq \frac{\delta}{3C} \sup_{n} \mu_n([0,t]) \leq \frac{\delta}{3}, \quad 
I_3 \leq \frac{\delta}{3C} \mu([0,t]) \leq \frac{\delta}{3}.
\end{equation*}
Furthermore we have $\lim_{n \to \infty} I_3 = 0$
by assumption. Consequently, 
\begin{equation} \label{1g}
\varlimsup_{n \to \infty} \left\lvert \int_{[0,t]} f(s) \mu_{m,n}''(ds) - \int_{[0,t]} f(s-) \mu_{m}''(ds)  \right\rvert \leq \delta.
\end{equation}
Combining \eqref{1f} and \eqref{1g}, we obtain
\begin{equation*}
\varlimsup_{n \to \infty} \left\lvert \int_{[0,t]} f(s) \mu_n(ds) - \int_{[0,t]} f(s-) \mu(ds)  \right\rvert \leq \delta.
\end{equation*}
Because $\delta$ is chosen arbitrarily, we have the desired result.
\end{proof}

\nocite{Sondermann_2006}
\nocite{Daivs_Obloj_Raval_2014}
\nocite{Foellmer_Schied_2013}
\nocite{Hirai_2016}

\vspace{0.5\baselineskip}

\begin{ackn}
The author would like to thank his supervisor Professor Jun Sekine for helpful support and encouragement.
\end{ackn}

\printbibliography

\vspace{0.5\baselineskip}

Graduate School of Engineering Science\par
Osaka University\par
1-3 Machikaneyama-cho, Toyonaka\par
Osaka 560-8531\par
Japan\par
E-mail: hirai@sigmath.es.osaka-u.ac.jp

\end{document}